\newcommand*{\bracketize}[1]{[#1]}
\setlist[itemize]{topsep=3pt,itemsep=3pt}
\definecolor{myRED}{HTML}{E6332A}
\definecolor{myGRAY}{HTML}{8E8E8E}
\definecolor{myMIDDGRAY}{HTML}{E5E5E5}
\definecolor{myMIDGRAY}{HTML}{E2E2E2}
\definecolor{myDARKGRAY}{HTML}{404647}
\definecolor{myLIGHTGRAY}{HTML}{F9F9F9}
\definecolor{myBLUE}{HTML}{03468F}
\definecolor{myPURPLE}{HTML}{8C54D0}
\definecolor{myGREEN}{HTML}{007355}
\definecolor{myYELLOW}{HTML}{FFD300}
\definecolor{myDARKYELLOW}{HTML}{FF9000}
\numberwithin{equation}{section}
\newtheorem{theorem}{Theorem}[section]
\newtheorem{lemma}[theorem]{Lemma}
\newtheorem{proposition}[theorem]{Proposition}
\newtheorem{corollary}[theorem]{Corollary}
\newtheorem{claim}[theorem]{Claim}
\newtheorem*{theorem*}{Theorem}
\newtheorem*{pushinglemma*}{Pushing Lemma}
\newtheorem*{lemmasep*}{Proposition 7.6}
\newtheorem*{definition*}{Definition}
\newtheorem{thmx}{Theorem}
\theoremstyle{definition}
\newtheorem{definition}[theorem]{Definition}
\newtheorem{remark}[theorem]{Remark}
\newcommand{\mycomment}[1]{}
\newcommand{\sspc}{\hspace*{0.04cm}}
\newcommand{\spc}{\hspace*{0.08cm}}
\newcommand{\pp}{{\sspc\prime}}
\newcommand{\homeo}{\mathrm{Homeo}}
\newcommand{\mcg}{\mathrm{MCG}}
\newcommand{\ind}{\mathrm{Ind}}
\newcommand{\str}{\mathrm{Str}}
\newcommand{\Tp}{\dot\theta}
\renewcommand{\O}{O}
\renewcommand{\P}{\mathcal P}
\renewcommand{\H}{\mathcal H}
\newcommand{\OO}{\mathcal O}
\newcommand{\R}{\mathbb R}
\newcommand{\Z}{\mathbb Z}
\newcommand{\F}{\mathcal F}
\newcommand{\W}{\mathcal W}
\newcommand{\ZZ}{\mathbb Z/4\mathbb Z}
\newcommand{\class}[1]{\big[\hspace*{0.08cm} #1 \hspace*{0.08cm} \big]}
\newcommand{\orb}{\mathrm{Orb}(f)}
\title[An index theory for transverse trajectories]{An index theory for transverse trajectories}
\author[N. Schuback]{Nelson Schuback}
\address{Sorbonne Université and Université Paris Cité, CNRS, IMJ-PRG, F-75005 Paris, France}
\email{\href{mailto:nelson.schuback@imj-prg.fr}{nelson.schuback@imj-prg.fr}}
\thanks{This project has received funding from the European Union’s Horizon 2020 research and innovation programme under the Marie Skłodowska-Curie grant agreement No 945332. \includegraphics*[scale = 0.03]{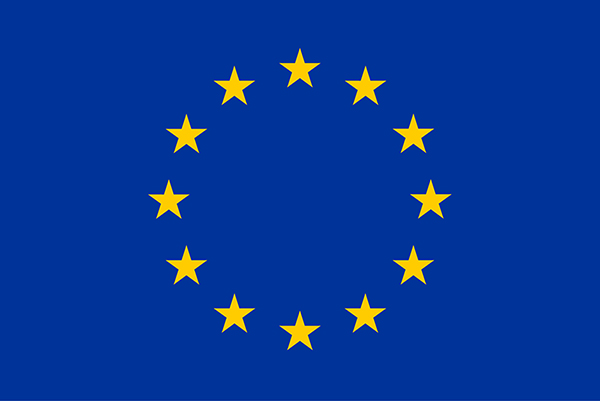}}
\begin{document}

\begin{abstract}
In this work, we present an alternative definition of the Le Roux index \cite{LEROUX_2017}, which generalizes the Poincaré–Hopf index for non-singular planar flows to the broader setting of Brouwer homeomorphisms. This new approach answers a question raised by Le Roux by establishing a connection between the index of a Brouwer homeomorphism and the structure of its transverse foliations, in the sense of Le Calvez \cite{lec1}.
\end{abstract}

\vspace*{0.5cm}

\maketitle
    \bigskip
\quad {\bf Keywords:} Planar homeomorphisms, Poincaré-Hopf index, Homotopy Brouwer theory, Transverse foliations, Transverse trajectories.

\bigskip
\quad {\bf MSC 2020:}  37E30 (Primary); 37B30 (Secondary)

\tableofcontents

\vspace*{-0.6cm}

\setlength{\baselineskip}{1.25\baselineskip}

\section{Introduction}

Let $X$ be a (smooth) non-singular vector field on $\R^2$. There exists a unique foliation of the plane $\F$ such that the leaves of $\F$ are the integral curves of $X$. Meaning, each leaf of $\F$ is a topological line corresponding to a trajectory of the flow $\{\Phi_t\}_{t \in \R}$ given as solution of 
\vspace{0.2cm}
$$ \begin{cases}
    \ \sspc \dfrac{d}{dt}\Phi_t(x) = X(x), \\[2ex]
    \ \sspc \Phi_0 (x) = x.
\end{cases}
$$

\vspace{0.1cm}
The \textit{Poincaré-Hopf index} between two leaves $\phi_1$ and $\phi_2$ of $\F$, denoted by $\ind(\F,\phi_1,\phi_2)$, can be defined as follows. We remark that, for any two distinct leaves $\phi_1,\phi_2 \in \F$, there exists an
orientation-preserving diffeomorphism $h:\R^2\longrightarrow \R^2$ satisfying $h(\phi_i) = \R_{\sspc i}\sspc$ for each $i\in\{1,2\}$, where $\R_i$ denotes the horizontal $\R\times\{i\}$. Observe that the pushforward vector field $h_*X$ is horizontal over the lines $h(\phi_1)$ and $h(\phi_2)$. This means that, for any path $\alpha:[0,1]\longrightarrow \R$ joining $h(\phi_1)$ to $h(\phi_2)$, the winding number of $h_*X$ along $\alpha$ is an integer multiple of $1/2$. Recall that\pagebreak \  this winding number is given by the expression
$$ \text{Wnd}(h_*X,\alpha) = \frac{1}{2\pi}\bigl(\sspc\widetilde{s}\spc(1) - \widetilde{s}\spc(0) \bigr),$$
where $\widetilde{s\sspc}:[0,1]\longrightarrow\R$ is a $\text{mod}_{2\pi}$ lift of the function $s:[0,1]\longrightarrow \R/2\pi\Z$ that assigns to each parameter $t\in[0,1]$ the angle of the vector $h_*X(\alpha(t))$ with respect to the positive horizontal direction. Via two different connectedness arguments, one can show that $\text{Wnd}(h_*X,\alpha)$ is independent of the choice of $h$ and $\alpha$. In conclusion, this winding number defines a notion of index between the leaves $\phi_1$ and $\phi_2$ of the foliation $\F$, which is denoted by $\ind(\F,\phi_1,\phi_2)$. This index can be unimbigously associated to the time-one map $\Phi_1$ and two of its orbits $\O_{\sspc 1}=\O(\Phi_1,x_1)$ and $\O_{\sspc 2}=\O(\Phi_1,x_2)$, for any $x_1 \in \phi_1$ and $x_2 \in \phi_2$, thus defining $\ind(\Phi_1,\O_{\sspc 1},\O_{\sspc 2}) := \ind(\F,\phi_1,\phi_2)$.

The notion of Brouwer homeomorphisms, i.e., fixed-point–free, orientation-preserving homeomorphisms of the plane, strictly generalizes that of time-one maps of non-singular flows \cite{Andrea65}. Surprisingly, many dynamical properties of non-singular flows extend to the general context of Brouwer homeomorphisms, despite the lack of a flow structure. For instance, Brouwer's Plane Translation Theorem \cite{Brouwer} implies that every orbit of a Brouwer homeomorphism is a proper embedding of the integers into the plane, and thus generalizes the Poincaré-Bendixson Theorem.
In this context, a natural question arises:
\begin{center}
    \textit{Is there a way to generalize the Poincaré-Hopf index to Brouwer homeomorphisms?}
\end{center}

This question was first addressed by Le Roux in \cite{LEROUX_2017}. There, building on Handel's Homotopy Brouwer Theory \cite{handel99}, Le Roux introduces a notion of index for Brouwer homeomorphisms with respect to two orbits, which we hereby refer to as the \textit{Le Roux index}. 

Let $f:\R^2\longrightarrow \R^2$ be a Brouwer homeomorphism, and let $\O_{\sspc 1}$ and $\O_{\sspc 2}$ be two orbits of $f$.\break The Brouwer mapping class of $f$ relative to $\O_{\sspc 1}$ and $\O_{\sspc 2}$, denoted by $\big[\spc f,\sspc\{\O_{\sspc 1},\O_{\sspc 2}\}\spc\big]$, is the isotopy class of $f$ relative $\O_{\sspc 1}\sspc \cup\sspc \O_{\sspc 2}$. Based on Handel's results in \cite{handel99}, we know that there exists an orientation-preserving homeomorphism $h:\R^2\longrightarrow \R^2$ that satisfies $h(\O_{\sspc i}) = \Z \times \{i\}$,\break for each $i\in\{1,2\}$, and that conjugates $\big[\spc f,\sspc\{\O_{\sspc 1},\O_{\sspc 2}\}\spc\big]$ to one of the following classes:
$$ \bigl[\spc T\sspc,\spc\Z_{1,2}\sspc\bigr]\sspc, \quad \class{T^{-1},\spc\Z_{1,2}}\sspc, \quad \bigl[\spc R\sspc,\spc\Z_{1,2}\sspc\bigr]\sspc, \quad \class{R^{-1},\spc\Z_{1,2}}\sspc,$$
where $\Z_{1,2}$ is the set $\Z \times \{1,2\}$, the map $T$ is the unitary horizontal translation on the plane, and $R$ is the time-one map of the standard Reeb flow on the plane (see Section \ref{sec:homotopy_brouwer_theory} for further details).
The set of all such homeomorphisms $h$ for $f$, $\O_1$ and $\O_2$ is denoted by $\textup{Handel}(\sspc f,\O_{\sspc 1},\O_{\sspc 2} )$.


Let $X_f$ be the $C^0$-vector field on the plane defined by the expression $X_f(x) = f(x)-x$.\break Observe that, for any map $h\in\text{Handel}(f,\O_{\sspc 1},\O_{\sspc 2})$, the vector field $X_{h\sspc\circ\sspc f \sspc\circ\sspc h^{-1}}$
is horizontal on $\Z_1$ and $\Z_2$. Thus, the winding number $\text{Wnd}(X_{h\sspc\circ\sspc f \sspc\circ\sspc h^{-1}},\alpha)$ along any path $\alpha$ joining $\Z_1$ to $\Z_2$ must be an integer multiple of $1/2$. In \cite{LEROUX_2017}, Le Roux shows that this winding number is independent of the choice of $h\in \text{Handel}(f,\O_{\sspc 1},\O_{\sspc 2})$ and $\alpha$, and therefore defines a notion of index for the Brouwer homeomorphism $f$ with respect to $\O_{\sspc 1}$ and $\O_{\sspc 2}$, denoted by $\ind(\sspc f,\O_{\sspc 1},\O_{\sspc 2} )$.

\vspace*{0.36cm}

\begin{figure}[h!]
    \hspace*{-1.2cm}\begin{overpic}[width=8.5cm, height=3.6cm, tics=10]{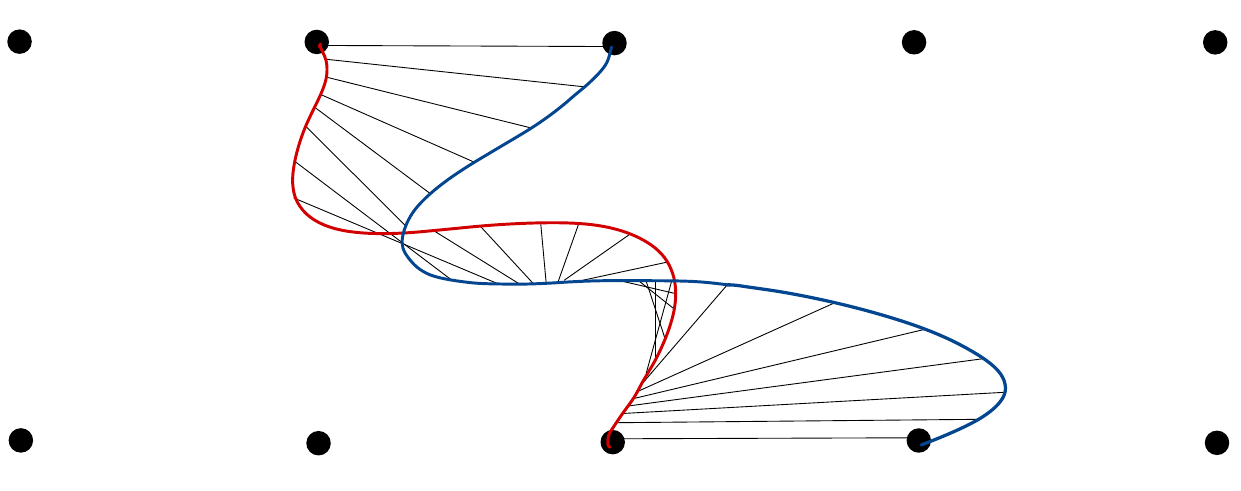}
        \put (28,43.8) {{\color{black}\large$\displaystyle X_{h\sspc\circ\sspc f \sspc\circ\sspc h^{-1}}$}}
        \put (105,37.2) {{\color{black}\large$\displaystyle \Z_2 = h(\O_2)$}}
        \put (105,1.8) {{\color{black}\large$\displaystyle \Z_1=h(\O_1)$}}
        \put (60,20) {{\color{myBLUE}\large$\displaystyle h\circ f \circ h^{-1} \circ \alpha$}}
        \put (18,26) {{\color{myRED}\large$\displaystyle \alpha$}}
\end{overpic}
\end{figure}

As expected, the Le Roux index generalizes the Poincaré–Hopf index for time-one maps of non-singular flows. In contrast with the flow case, however, it does not admit a natural interpretation in terms of a foliation index. Furthermore, since it depends on a specific conjugacy chosen according to criteria from Homotopy Brouwer Theory rather than on intrinsic properties of the Brouwer homeomorphism $f$, it becomes unclear how the Le Roux index might reflect the qualitative dynamical features of $f$. This issue is emphasized in \cite{LEROUX_2017}, where Le Roux conjectures that there may exist a more intrinsic and possibly more intuitive definition of the index \(\ind(f,\O_{\sspc 1},\O_{\sspc 2})\), involving a conjugacy criterion based on the qualitative dynamical features of $f$ with respect to the orbits \(\O_{\sspc 1}\) and \(\O_{\sspc 2}\).

In this work, we adress Le Roux's question by providing a foliation-based definition of the index for Brouwer homeomorphisms that coincides with the Le Roux index. First, we introduce an index \(\ind(\sspc\F,\sspc\Gamma_1,\Gamma_2)\) between two oriented topological lines $\Gamma_1$ and $\Gamma_2$ positively transverse to a foliation \(\F\) (see Proposition \ref{prop:index_invariance}). Below, we give an intuitive description of this index.

\vspace*{0.2cm}

Let $\F$ be an oriented foliation of $\R^2$ and let $\Gamma_1$ and $\Gamma_2$ be oriented topological lines positively transverse to $\F$. If $\Gamma_1$ and $\Gamma_2$ intersect, we will define $\ind(\F,\Gamma_1,\Gamma_2):=0$. Thus, we can assume that $\Gamma_1\cap\Gamma_2=\varnothing$. Consider an orientation-preserving homeomorphism $h:\R^2\longrightarrow \R^2$ satisfying $h(\Gamma_i) = \R_{\sspc i}$ for $i \in \{1,2\}$, and two paths $\alpha,\alpha':[0,1]\longrightarrow \R^2$ satisfying $\alpha(0) \in \Gamma_1$, $\alpha(1) \in \Gamma_2$, and 
$$\alpha'(t) \in \phi^+_{\alpha(t)}\setminus\{\alpha(t)\} \sspc, \quad \forall t\in[0,1],$$
\noindent
where $\phi_{\alpha(t)}\in \F$ denoted the leaf passing through the point $\alpha(t)$, and $\phi^+_{\alpha(t)}$ denotes the positive half of $\phi_{\alpha(t)}$ starting at the point $\alpha(t)$. Roughly speaking, the path $\alpha'$ is obtained by pushing the path $\alpha$ slightly along the positive direction of the leaves of $\F$.

Exclusively for this intuitive definition, we suppose that $h(\phi_{\alpha(0)})$ and $h(\phi_{\alpha(1)})$ are vertical near the points $h(\alpha(0))$ and $h(\alpha(1))$, and that $h(\alpha'(0))$ and $h(\alpha'(1))$ are located in these vertical parts. With this extra hypothesis, we observe that the winding number of the vector 
$$X_{h,\alpha,\alpha'} (t) = h\circ \alpha'(t) - h\circ \alpha(t),$$
as $t$ varies from $0$ to $1$, is an integer multiple of $1/2$. In Chapter \ref{chap:foliation_index}, we present a general theory, based on the notion of topological angle, that implicitly implies the independence of this winding number from the choice of $h$, $\alpha$ and $\alpha'$. This winding number defines the index \(\ind(\F,\Gamma_1,\Gamma_2)\).

\vspace*{0.3cm}
\begin{figure}[h!]
    \center\hspace*{-1cm}\begin{overpic}[width=7.3cm, height=4.2cm, tics=10]{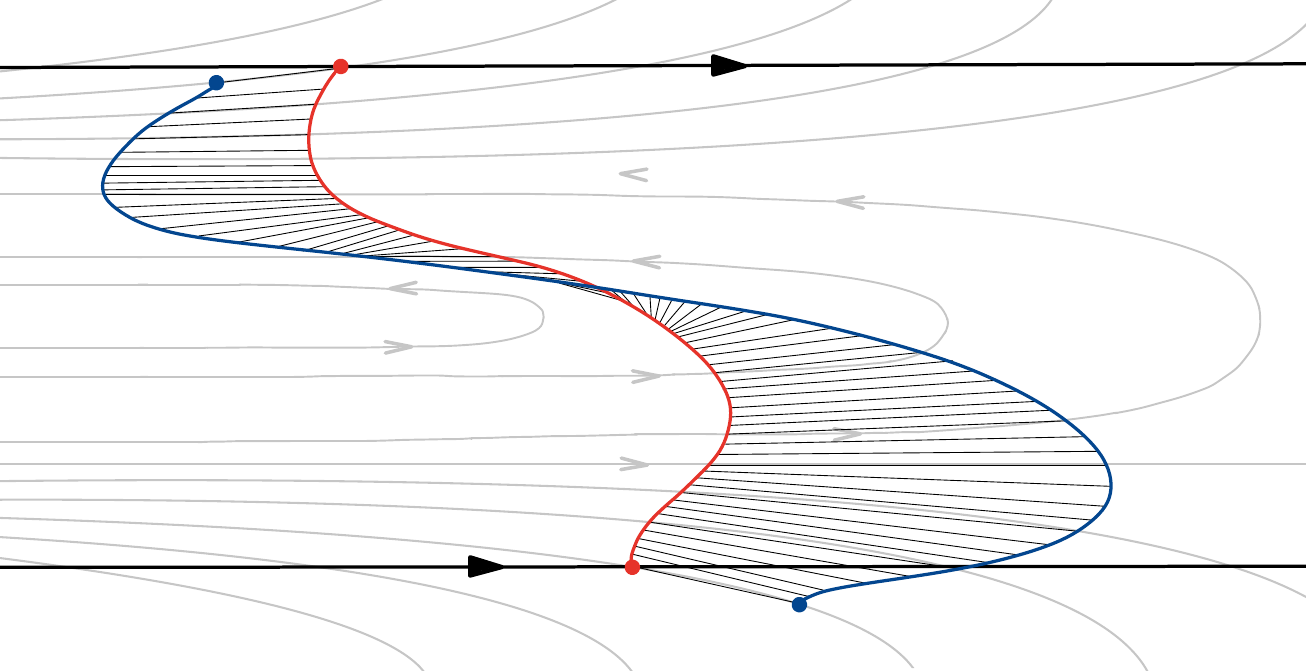}
        \put (16,56) {\colorbox{white}{\color{black}\large$\displaystyle \rule{0cm}{0.28cm}\quad \quad \ \ $}}
        \put (17.5,56.5) {{\color{black}\large$\displaystyle X_{h,\alpha,\alpha^\pp}$}}
        \put (105,50.5) {{\color{black}\large$\displaystyle \R_2 = h(\Gamma_2)$}}
        \put (105,7.5) {{\color{black}\large$\displaystyle \R_1=h(\Gamma_1)$}}
        \put (100,30) {{\color{myGRAY}\large$\displaystyle \F$}}
        \put (78.5,25.5) {{\color{myBLUE}\large$\displaystyle h\circ  \alpha^\pp$}}
        \put (40,21.5) {{\color{myRED}\large$\displaystyle h \circ \alpha$}}
\end{overpic}
\end{figure}

Finally, we prove that this foliation index generalizes the Le Roux index for Brouwer homeomorphisms within the context of foliated Brouwer theory developed in \cite{lec1} (see Section \ref{sec:foliated_brouwer}).

\begin{thmx}\label{thmx:III-A}
    Let $\F$ be a transverse foliation of a Brouwer homeomorphism $f$, and let $\Gamma_1$ and $\sspc\Gamma_2$\break be proper transverse trajectories of two orbits $\O_{\sspc 1}$ and $\O_{\sspc 2}$ of $f$, respectively. Then, it holds
    $$\textup{Ind}(\F,\Gamma_1,\Gamma_2) = \textup{Ind}(f,\O_{\sspc 1},\O_{\sspc 2}).$$
\end{thmx}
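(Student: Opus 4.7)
I would express both indices as winding numbers of vector fields along a single path $\alpha$ in $\R^2$, and compare them through a homotopy built from Le Calvez's transverse isotopy of $f$.

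First, by the invariance of both indices under their defining choices, I would select an orientation-preserving homeomorphism $h:\R^2\to\R^2$ simultaneously satisfying $h(\Gamma_i)=\R_i$ (orientations matched), $h(\O_{\sspc i})=\Z_i$, $h\in\textup{Handel}(f,\O_{\sspc 1},\O_{\sspc 2})$, and such that the image leaves $h(\phi_{x_1}),\,h(\phi_{x_2})$ are vertical near $h(x_1),\,h(x_2)$, where $x_i\in\O_{\sspc i}$. Such an $h$ is obtained from any Handel-normalizing map by composing with a properly supported ambient isotopy straightening the transverse trajectories onto the horizontal axes and the two selected leaves into vertical segments; since this isotopy is isotopic to the identity, it does not change the Brouwer mapping class. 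Choose a path $\alpha:[0,1]\to\R^2$ with $\alpha(0)=x_1$ and $\alpha(1)=x_2$. With this setup, the Le Roux index equals the winding along $\alpha$ of $v_{LR}(t):=hf\alpha(t)-h\alpha(t)$, and the foliation index equals the winding of $v_F(t):=h\alpha'(t)-h\alpha(t)$ for a sufficiently short push $\alpha'$ into the positive half-leaves of $\F$.

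Second, I would invoke the foliated Brouwer theory of \cite{lec1}: there exists an isotopy $(I_s)_{s\in[0,1]}$ from $\mathrm{id}$ to $f$ whose trajectories $s\mapsto I_s(x)$ are positively transverse to $\F$. For each $t\in[0,1]$, the resulting positively transverse path from $\alpha(t)$ to $f(\alpha(t))$ forces $v_{LR}(t)$ to lie in the open positive-transverse half-plane of the leaf $h(\phi_{\alpha(t)})$ at $h\alpha(t)$, while $v_F(t)$ lies along the positive leaf-tangent ray. Both vectors belong to a continuously varying contractible open set in $\R^2\setminus\{0\}$ (the closed positive-transverse half-plane with the negative tangent ray removed), so $v_F$ and $v_{LR}$ are homotopic along $\alpha$ through non-vanishing vectors.

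Third, I would compute the endpoint contributions. At each $t\in\{0,1\}$, the Step~1 normalization makes $v_{LR}(t)$ horizontal (equal to $(1,0)$ since $\Gamma_i$ is parametrized forward under $f$) and $v_F(t)$ vertical (with direction fixed by the positive transversality of $\Gamma_i$ to $\F$). The canonical rotation from $v_F$ to $v_{LR}$ inside the contractible set is therefore a quarter turn whose sign and magnitude depend only on the positive transversality convention, and hence coincide at $t=0$ and $t=1$. These endpoint rotations cancel in the net winding comparison, yielding $\textup{Wnd}(v_{LR})=\textup{Wnd}(v_F)$ and thus $\ind(f,\O_{\sspc 1},\O_{\sspc 2})=\ind(\F,\Gamma_1,\Gamma_2)$. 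The main obstacle will be making Step~3 rigorous in the topological angle framework from Chapter~\ref{chap:foliation_index}: since the leaves of $\F$ are only topological lines, the endpoint rotations must be interpreted as topological angles rather than geometric ones, and the invariance of the foliation index under the local reshaping of $\F$ performed in Step~1 must be justified within that framework.
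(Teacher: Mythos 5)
Your high-level plan — normalize by a single homeomorphism lying in both $\str(\Gamma_1,\Gamma_2)$ and $\textup{Handel}(f,\O_1,\O_2)$, express both indices as winding-type quantities along one path, and then deform one to the other — matches the skeleton of the paper's proof, which uses $\str^\dagger(\Gamma_1,\Gamma_2)\subset\textup{Handel}(f,\O_1,\O_2)$ (Claim~\ref{claim:str_dagger}) and then compares the two quantities through a common lift. However, there are two genuine gaps.

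First, the case split is missing. The paper treats separately the case where $\O_1$ and $\O_2$ cross a common leaf of $\F$. In that case $\Gamma_1$ and $\Gamma_2$ may intersect, so $\str(\Gamma_1,\Gamma_2)=\varnothing$ and your Step~1 cannot produce the required $h$; the paper instead shows directly that both indices vanish (via Theorem~\ref{thm:free} for the Le Roux index, and via the common leaf for the foliation index). Your argument implicitly assumes $\Gamma_1\cap\Gamma_2=\varnothing$, which only holds when the orbits are separated by a leaf.

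Second, and more seriously, Step~2 does not work as stated. The claim that $v_{LR}(t)$ and $v_F(t)$ both lie in ``a continuously varying contractible open set in $\R^2\setminus\{0\}$ (the closed positive-transverse half-plane with the negative tangent ray removed)'' presupposes a linear or sectorial structure that a topological leaf does not carry. The leaf $h(\phi_{\alpha(t)})$ is an arbitrary topological line; $f(\alpha(t))$ is merely known to lie in the left \emph{component} $L(\phi_{\alpha(t)})$, which is a topological half-plane, and the Euclidean vector $h\circ f(\alpha(t))-h(\alpha(t))$ can point in any direction whatsoever (for a spiraling leaf the directions into $L(\phi_z)$ from $z$ fill all of $S^1$). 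So there is no ``positive-transverse half-plane'' of directions, and no straight-line or sector-rotation homotopy. This is precisely the difficulty the paper's Section~\ref{sec:accessible} is built to overcome: it replaces the naive vector-sector picture by the accessibility set $A_z(\F)$ and proves via the Serre-fibration Lemma~\ref{lem:fibration} that the configuration space $W_{A(\F)}$ is simply connected, whence a single lift $\Theta_{A(\F)}$ of the topological angle exists over $W_{A(\F)}$ and restricts to $\Theta_\F$ on $W_\F$ and to $\Theta_f$ on $W_f$. Your ``homotopy through non-vanishing vectors'' is, in the paper, encoded in the path-connectedness of the fibers $A_z\setminus\{z\}$ and the HLP of $p:W_{A(\F)}\to\R^2$, neither of which your argument establishes or replaces. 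You flag a difficulty in Step~3, but the endpoint ``quarter turn'' is actually the easy part (the paper handles it with a one-step lift increment); the missing idea is the global simple-connectivity input needed for Step~2.
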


\section[Background and notations]{
\mycomment{-0.7cm}Background and notations}

\mycomment{-0.5cm}

\subsection{Whitney's parametrization of $C^{\sspc 0}\sspc$-foliations} 

Let $\{q_i\}_{i \in \mathbb N}$ be a countable and dense subset of $\R^2$. For each $i \in \mathbb N$, let $m_i: \R^2 \longrightarrow (0,1]$ be the function that maps $p \in \R^2$ to the value
\begin{equation*}
    m_i(p) = \frac{1}{1 + d(p, q_i)},
\end{equation*}
where $d$ is the Euclidean distance in $\R^2$. 
Let $\P^*(\R^2)$ be the set of non-empty subsets of $\R^2$, and define the function $\mu_i: \P^*(\R^2) \longrightarrow [0,1]$ given by the expression
\begin{equation*}
    \mu_i (A) = \sup_{x,y \in A}\left|m_i(x) - m_i(y)\right|.
\end{equation*}
Finally, we can define the function $\mu: \P^*(\R^2) \longrightarrow [0,1]$ as
\begin{equation*}
    \mu(A) = \sum_{i \in \mathbb N} \frac{\mu_i(A)}{2^i}.
\end{equation*}
This function $\mu$ is called the \textit{Whitney function}, and it can be used to measure the ``size'' of subsets of $\R^2$. Its main properties are summarized in the following theorem.

\begin{theorem*}[Whitney, \cite{Whitney}]\label{thm:whitney}
    The Whitney function $\mu$ satisfies the following properties:
    \begin{itemize}[leftmargin=1.5cm]
        \item[\textbf{\textup{(i)}}] $\mu(A) = 0$ if, and only if, $A$ has only one element.
        \item[\textbf{\textup{(ii)}}] $\mu(\overline{\rule{0cm}{0.31cm}A\sspc}) = \mu(A)$.
        \item[\textbf{\textup{(iii)}}] If $A\subset A'$, then $\mu(A) \leq \mu(A')$.
        \item[\textbf{\textup{(iv)}}] If $\overline{\rule{0cm}{0.31cm}A\sspc} \subset A'$, then $\mu(A) < \mu(A')$.
        \item[\textbf{\textup{(v)}}] Let $(A_n)_{n \geq 0}$ be a sequence of compact subsets of $\R^2$ that converges in the Hausdorff metric to a compact subset $A\subset \R^2$. Then, we have the convergence $\mu(A_n) \longrightarrow \mu(A)$.
    \end{itemize}
\end{theorem*}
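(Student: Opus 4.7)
The plan is to treat the five properties in order of increasing difficulty, exploiting two elementary observations about the functions $m_i$: each $m_i$ is $1$-Lipschitz on $\R^2$ (from $|m_i(x)-m_i(y)| = |d(x,q_i)-d(y,q_i)|/((1+d(x,q_i))(1+d(y,q_i))) \leq d(x,y)$) and takes values in $(0,1]$. Consequently, on any non-empty set $A$ one has $\mu_i(A) = \sup_A m_i - \inf_A m_i \in [0,1]$, and the series $\sum 2^{-i}\mu_i(A)$ converges uniformly in $A$.

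Properties (ii) and (iii) are immediate: (iii) follows because the supremum over a larger set is larger; (ii) follows because continuity of $m_i$ gives $\sup_A m_i = \sup_{\overline A} m_i$ and similarly for the infimum, so $\mu_i(A) = \mu_i(\overline A)$ term by term. For (i), the implication $\mu(\{x\}) = 0$ is obvious; conversely, if $\mu(A) = 0$ then every $\mu_i(A) = 0$, so each $m_i$, and therefore each distance function $d(\,\cdot\,, q_i)$, is constant on $A$. If $A$ contained two distinct points $x \neq y$, then every $q_i$ would lie on the perpendicular bisector of $\{x,y\}$, contradicting the density of $\{q_i\}_{i \in \N}$ in $\R^2$.

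The main step is (iv). Assuming $\overline A \thinsubsetneq A'$, I would pick a point $y \in A' \setminus \overline A$ and set $\delta = d(y, \overline A) > 0$, which is positive because $\overline A$ is closed. By density, choose an index $i$ with $d(q_i, y) < \delta/2$. Then $m_i(y) > 1/(1+\delta/2)$, while for every $x \in \overline A$ the triangle inequality gives $d(x, q_i) \geq d(x,y) - d(y,q_i) \geq \delta/2$, hence $m_i(x) \leq 1/(1+\delta/2)$. Combining this with (ii) yields $\sup_A m_i = \sup_{\overline A} m_i \leq 1/(1+\delta/2) < m_i(y) \leq \sup_{A'} m_i$, while $\inf_{A'} m_i \leq \inf_A m_i$ since $A \subset A'$. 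Therefore $\mu_i(A') > \mu_i(A)$ strictly, which together with $\mu_j(A') \geq \mu_j(A)$ for all $j$ gives $\mu(A') > \mu(A)$.

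Finally, for (v), the $1$-Lipschitz property of $m_i$ yields $|\sup_{A_n} m_i - \sup_A m_i| \leq d_H(A_n, A)$ and the analogous inequality for the infimum, where $d_H$ denotes the Hausdorff distance between compact sets. Thus $\mu_i(A_n) \to \mu_i(A)$ for every fixed $i$, and since $|\mu_i(A_n)| \leq 1$ and $\sum 2^{-i} < \infty$, dominated convergence applied to the counting measure on $\N$ gives $\mu(A_n) \to \mu(A)$. The main obstacle, in my view, is property (iv): it is the only place where one must actively exploit the density of the sequence $\{q_i\}_{i\in\N}$ to produce a specific index witnessing strict inequality, whereas all the other properties reduce to routine continuity and convergence arguments.
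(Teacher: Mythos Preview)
Your proof is correct. Note, however, that the paper does not actually prove this theorem: it simply states the result and writes ``We refer to the original paper \cite{Whitney} for a proof of this theorem.'' So there is no in-paper argument to compare against; you have supplied what the paper omits.

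A couple of minor remarks. In (iv) you tacitly read $\overline{A}\subset A'$ as strict inclusion, which is the only reading under which the claim is true (if $\overline{A}=A'$ then (ii) gives $\mu(A)=\mu(A')$); this is consistent with how the paper uses the property later (to show that $z'\mapsto \mu(\phi_{z,z'})$ is strictly increasing along a leaf), so your interpretation is the intended one. In (v), your Lipschitz estimate $|\sup_{A_n} m_i-\sup_A m_i|\le d_H(A_n,A)$ is exactly right and makes the proof very clean; an equivalent way to phrase the final step is the Weierstrass $M$-test rather than dominated convergence, but this is purely cosmetic.
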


We refer to the original paper \cite{Whitney} for a proof of this theorem. In that paper, Whitney uses the function $\mu$ to parametrize topological (also called $C^{\sspc 0}\sspc$) foliations of $\R^2$. In this section, we use the same function to parametrize these foliations of $\R^2$, but with a different approach.

\mycomment{-0.9cm}
Let $\F$ be an oriented topological foliation of $\R^2$. Recall that, for any $z \in \R^2$, we denote:
\begin{itemize}[leftmargin=1.5cm]
    \item $\phi_z$ the leaf of $\F$ passing through $z$,
    \item $\phi_z^+$ the sub-halfline of $\phi_z$ that starts at $z$, and includes $z$,
    \item $\phi_z^-$ the sub-halfline of $\phi_z$ that ends at $z$, and includes $z$,
    \item $\phi_{z,z'}$ the subarc of $\phi_z$ that joins $z$ to a point $z' \in \phi_z^+$.
\end{itemize}
\mycomment{0.3cm}

\noindent Let $\tau : \mathbb \R^2 \longrightarrow (0,1]$ be the function defined by the expression
\mycomment{-0.1cm}
    \begin{equation*}
        \tau(z) = \sup_{z' \in \phi_z^+} \mu \left( \phi_{z,z'} \right) = \mu\bigl(\phi_z^+\bigr).
    \end{equation*}

\begin{lemma}\label{tau}
    The function $\tau$ is lower semi-continuous.
\end{lemma}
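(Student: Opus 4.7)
I would prove lower semi-continuity pointwise. Fix $z_0 \in \R^2$ and $\varepsilon > 0$; the goal is to exhibit a neighborhood $U$ of $z_0$ on which $\tau(z) > \tau(z_0) - \varepsilon$. By the supremum definition of $\tau(z_0)$, there exists $z_0^\pp \in \phi_{z_0}^+$ such that
\[
\mu\bigl(\phi_{z_0,z_0^\pp}\bigr) > \tau(z_0) - \tfrac{\varepsilon}{2}.
\]
Since $\phi_{z_0,z_0^\pp}$ is a compact arc contained in a single leaf, the strategy is to construct, for each $z$ near $z_0$, a point $z^\pp(z) \in \phi_z^+$ so that the compact arcs $\phi_{z,z^\pp(z)}$ converge to $\phi_{z_0,z_0^\pp}$ in the Hausdorff metric as $z \to z_0$. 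Property (v) of Whitney's theorem then gives
\[
\mu\bigl(\phi_{z,z^\pp(z)}\bigr) \longrightarrow \mu\bigl(\phi_{z_0,z_0^\pp}\bigr),
\]
and combined with $\tau(z) \geq \mu(\phi_{z,z^\pp(z)})$, this yields $\tau(z) > \tau(z_0) - \varepsilon$ on a small enough neighborhood.

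\textbf{Construction of the nearby arcs.} Using the local triviality of $\F$, cover $\phi_{z_0,z_0^\pp}$ by a finite chain of trivializing charts $V_1, \dots, V_N$, each homeomorphic to the unit square foliated by horizontal segments, chosen so that consecutive charts overlap and so that the portion of $\phi_{z_0}^+$ inside each $V_k$ is a single horizontal plaque. For $z$ sufficiently close to $z_0$, the leaf $\phi_z$ enters $V_1$ close to $z_0$, and within each chart one can follow its horizontal plaque forward, transferring to $V_{k+1}$ through the overlap with $V_k$. This defines an endpoint $z^\pp(z)$ inside $V_N$ that varies continuously with $z$, together with the sub-arc $\phi_{z,z^\pp(z)} \subset \phi_z^+$. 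In each chart the leafwise parametrization is continuous in the transverse direction, so a straightforward uniform-continuity argument over the finite chain yields Hausdorff convergence $\phi_{z,z^\pp(z)} \to \phi_{z_0,z_0^\pp}$ as $z \to z_0$.

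\textbf{Conclusion.} Combine the Hausdorff convergence with Whitney's property (v): for $z$ in a small enough neighborhood of $z_0$ one has $\mu(\phi_{z,z^\pp(z)}) > \mu(\phi_{z_0,z_0^\pp}) - \tfrac{\varepsilon}{2} > \tau(z_0) - \varepsilon$, hence $\tau(z) > \tau(z_0) - \varepsilon$.

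\textbf{Main obstacle.} The only nontrivial step is the chart-by-chart construction of a Hausdorff-convergent family of arcs, which in the purely topological (\,$C^{\sspc 0}$\,) setting requires using the local trivializations of $\F$ rather than any ODE-like flow. This is a standard but careful argument: the endpoint $z^\pp(z)$ must be chosen so that it remains in the positive half-leaf, which is ensured by orienting each flow-box compatibly with the orientation of $\F$ and propagating through the overlap of consecutive charts in the positive direction.
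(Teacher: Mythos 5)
Your proof is correct and takes essentially the same approach as the paper: both reduce lower semi-continuity to Hausdorff convergence of nearby leaf arcs obtained via the local flow-box structure of $\F$, followed by an application of Whitney's property (v). The only cosmetic differences are that the paper works sequentially ($\liminf_n \tau(z_n) \geq \mu(\phi_{z,z''})$ for every $z'' \in \phi_z^+$) and uses a single long trivializing neighborhood of the compact arc $\phi_{z,z'}$ where you use an $\varepsilon$-neighborhood formulation and a finite chain of overlapping flow-box charts; these are interchangeable details.
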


\mycomment{-0.2cm}
\begin{proof}
    Let $(z_n)_{n \geq 0}$ and $(z_n')_{n \geq 0}$ be sequences in $\R^2$ satisfying $z_n' \in \phi_{z_n}^+$ and converging to points $z$ and $z'$ in $\R^2$, respectively, which we assume to also satisfy the condition $z' \in \phi_z^+$. 
    Consider a neighborhood $V$ of $\phi_{z,z'}$ that trivializes the foliation $\F$ around the segment $\phi_{z,z'}$. Let $h: V \longrightarrow [0,1]^2$ be the corresponding trivializing map, mapping $\F\vert_V$ to the horizontal foliation of $[0,1]^2$. Up to taking a subsequence, we can assume that $z_n, z_n' \in V$ for all $n \geq 0$. Within the trivially foliated rectangle $[0,1]^2$, we have that $h(\phi_{z_n,z_n'})$ converges to $h(\phi_{z,z'})$ in the Hausdorff metric. Since $h^{-1}$ is uniformly continuous on the square $[0,1]^2$, this implies that $\phi_{z_n,z_n'}$ converges to $\phi_{z,z'}$ in the Hausdorff metric as well. By item (v) of Whitney's theorem, we have that 
    $\mu(\phi_{z_n,z_n'}) \longrightarrow \mu(\phi_{z,z'})$. This, together with the definition of $\tau$, yields
    $$\liminf_n \tau(z_n) \geq \liminf_n \mu(\phi_{z_n,z_n'}) = \mu(\phi_{z,z'}).$$

    Next, we observe that for any point $z''\in \phi_z^+$, there exists a sequence $(z_n'')_{n \geq 0}$ in $\R^2$ that satisfies 
    $z_n'' \in \phi_{z_n}^+$ and $z_n'' \longrightarrow z''$. This means that we can apply the inequality above to every point in $\phi_z^+$. In other words,
    $\liminf_n \tau(z_n) \geq \mu(\phi_{z,z''})$ for every $z'' \in \phi_z^+.$
    This implies that 
    $$\tau(z) = \sup_{z'' \in \phi_z^+} \mu(\phi_{z,z''}) \leq \liminf_n \tau(z_n).$$
    This proves that $\tau$ is lower semi-continuous.
\end{proof}

Let us define the set 
\mycomment{-0.2cm}
$$ W = \{(x,t) \in \R^2\times [0,1) \mid 0\leq t <\tau(x)\},$$
and consider the projection $\text{pr}_{\R^2}:\R^2\times[0,\infty)\longrightarrow \R^2$ given by $\text{pr}_{\R^2}(x,t) = x$.

\begin{lemma}
    There exists a homeomorphism $$g:W\longrightarrow \R^2\times [0,\infty)$$ that fixes the base space $\R^2$, meaning, it satisfies $\textup{pr}_{\R^2}\circ g = \textup{Id}_{\R^2}$.
\end{lemma}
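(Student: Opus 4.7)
The plan is to stretch each fiber $\{x\}\times[0,\tau(x))$ of $W$ onto $\{x\}\times[0,\infty)$ via an integral formula built from the distance to $\partial W$, thereby avoiding direct reference to the merely lower semi-continuous function $\tau$.

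First I would observe that $W$ is open in $\R^2 \times [0,1)$, and hence a $3$-manifold with boundary $\R^2\times\{0\}$: if $(x,t)\in W$ and $(x_n,t_n)\to(x,t)$, Lemma \ref{tau} yields $\liminf_n\tau(x_n)\ge\tau(x)>t$, so $(x_n,t_n)\in W$ eventually. Let $\partial W=\overline W\setminus W$ denote the boundary of $W$ in $\R^3$ and set $\rho:W\to(0,\infty)$, $\rho(p)=d(p,\partial W)$. This $\rho$ is $1$-Lipschitz and strictly positive. Since the sequence $(x,\tau(x)-1/n)$ lies in $W$ for large $n$, the point $(x,\tau(x))$ belongs to $\partial W$ for every $x\in\R^2$, giving the key estimate
$$\rho(x,s)\le \tau(x)-s,\qquad \forall\,(x,s)\in W.$$

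I would then define
$$H(x,t)=\int_0^t\frac{ds}{\rho(x,s)}\qquad\text{and}\qquad g(x,t)=(x,H(x,t)).$$
For each fixed $x$, the integrand is bounded on the compact segment $\{x\}\times[0,t]\subset W$, so $H(x,\cdot)$ is well-defined, continuous, and strictly increasing with $H(x,0)=0$; moreover the bound $1/\rho(x,s)\ge 1/(\tau(x)-s)$ forces $H(x,t)\to\infty$ as $t\to\tau(x)^-$. Thus $H(x,\cdot):[0,\tau(x))\to[0,\infty)$ is a homeomorphism of intervals, and $g$ is a base-preserving bijection restricting to the identity on $\R^2\times\{0\}$. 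Global continuity of $H$ follows from a dominated-convergence argument: for any $(x,t)\in W$, a sufficiently small compact neighborhood of the segment $\{x\}\times[0,t]$ stays inside $W$, on which $\rho$ is bounded away from $0$, so $1/\rho(x_n,s)$ is uniformly dominated for $n$ large. Continuity of the inverse then follows from invariance of domain applied to the interiors of the $3$-manifolds with boundary $W$ and $\R^2\times[0,\infty)$, together with the fact that $g$ restricts to the identity on the common boundary.

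The main obstacle is that $\tau$ is only lower semi-continuous, which rules out naive fiberwise formulas such as $(x,t)\mapsto(x,t/(\tau(x)-t))$: these would be discontinuous precisely at the jumps of $\tau$. Replacing $\tau(x)-t$ with the continuous quantity $\rho(x,t)$ sidesteps this issue, since $\rho$ is always continuous while still vanishing at the right rate as $(x,t)$ approaches the upper boundary of $W$, making the fiber integrals of $1/\rho$ diverge.
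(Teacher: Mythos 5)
Your proof is correct and takes a genuinely different route from the paper. The paper regularizes the lower semi-continuous $\tau$ from below by the Lipschitz inf-convolutions $\tau_k(x)=(1-1/k)\inf_{y}\{\tau(y)+k\,d(x,y)\}$, defines the closed sub-regions $W_k$ via $\tau_k$, and assembles the homeomorphism as a compatible increasing union of pieces $g_k:W_k\to\R^2\times[0,k]$. You instead write a single closed-form fiberwise formula built from $\rho(p)=d\bigl(p,\overline{W}\setminus W\bigr)$: the estimate $\rho(x,s)\le\tau(x)-s$ forces the integral $\int_0^t ds/\rho(x,s)$ to diverge at the top of each fiber, while the global $1$-Lipschitz continuity of $\rho$ makes the formula continuous in both variables with no need to regularize $\tau$. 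This is a real simplification --- it replaces an inductive construction by a two-line formula, and it isolates exactly where lower semi-continuity enters (openness of $W$ in $\R^2\times[0,1)$, hence $\rho>0$ on $W$).

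One step is stated more briefly than it should be: continuity of $g^{-1}$ at points of $\R^2\times\{0\}$. Invariance of domain applied to the interiors only gives continuity of $g^{-1}$ on $\R^2\times(0,\infty)$; it does not by itself cover the boundary. The clean way to finish, which your appeal to ``$g$ is the identity on the common boundary'' is presumably pointing at, is to double across $\R^2\times\{0\}$: since $g$ fixes the boundary pointwise, its reflection defines a continuous injection from the open set $\{(x,t)\in\R^3 : |t|<\tau(x)\}$ (open in $\R^3$ by Lemma \ref{tau}) into $\R^3$, and invariance of domain there gives the full homeomorphism at once. Alternatively, a direct check suffices: if $(y_n,v_n)\to(y_0,0)$ with $g^{-1}(y_n,v_n)=(y_n,t_n)$ and $t_n\ge\delta>0$ along a subsequence, then for $n$ large the segments $\{y_n\}\times[0,\delta]$ lie in a fixed compact subset of $W$ on which $\rho\le M$, so $v_n=H(y_n,t_n)\ge\delta/M>0$, contradicting $v_n\to0$. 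Either patch is short, and with it the argument is complete.
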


\begin{proof}
    For any $k \in \mathbb N$, consider the convolution function
    $$\tau_k(x):=(1-1/k)\inf_{y \in \R^2} \left\{ \tau(y) + k \cdot d(x,y)\right\}.$$
    Observe that, each function $\tau_k:\R^2 \longrightarrow (0,1]$ satisfies the following properties:
    \begin{itemize}
        \item $\tau_k$ is $k$-Lipshitz continuous.
        \item $\tau_k(x) < \tau_{k+1}(x) < \tau(x)$, for every $x \in \R^2$.
        \item $\lim_{k\to\infty} \tau_k(x) = \tau(x)$, for every $x \in \R^2$.
    \end{itemize}

    \mycomment{0.2cm}
    \noindent Now, for each $k \in \mathbb N$, we define the set
    $$W_k = \{(x,t) \in \R^2\times [0,1] \mid 0\leq t \leq\tau_k(x)\}.$$
    Since $\tau_k$ is continuous, there exists a homeomorphism
    $$g_k:W_k\longrightarrow \R^2\times [0,k]$$ that maps $\text{graph}_{\spc \tau_k} = \{(x,\tau_k(x)) \in \R^2\times [0,k] \mid x \in \R^2\}$ to $\R^2\times \{k\}$ while fixing the base space $\R^2$, meaning that, $\text{pr}_{\R^2}\circ g_k = \text{Id}_{\R^2}$. Additionally, we can construct such maps $g_k$ in such a way that they are compatible with each other, meaning that 
    $$ g_{k+1}\vert_{W_k} = g_k\sspc, \quad \forall k \in \mathbb N.$$
    Since $W = \bigcup_{k \in \mathbb N} W_k$, we can consider the map $g:W\longrightarrow \R^2\times [0,\infty)$ defined by the property
    $$g\vert_{W_k} = g_k\sspc, \quad \forall k \in \mathbb N.$$
    Finally, to conclude the proof, we need to show that $g$ is a homeomorphism. Indeed, note that any point $(x,t) \in W$ admits a neighborhood $U$ such that $U\subset W_k$ for some $k \in \mathbb N$. Since $g_k$ is a homeomorphism, it follows that $g$ is a local homeomorphism. Since $g$ is also a bijection, it is a homeomorphism. Moreover, since $g_k$ fixes the base space $\R^2$, we have that $g$ also fixes the base space $\R^2$. This concludes the proof.
\end{proof}

Finally, we define the set
$$\overline{\rule{0cm}{0.31cm}W_\F\sspc} := \{(z,z') \in \R^2\times \R^2 \mid z' \in \phi_z^+\}.$$
Observe that, by defining the diagonal $\Delta = \{(z,z) \in \R^2\times \R^2\}$, the set $W_\F:=\overline{\rule{0cm}{0.31cm}W_\F\sspc} \setminus \Delta$ corresponds to the interior of the set $\overline{\rule{0cm}{0.31cm}W_\F\sspc}$. These sets are described in the following lemma.

\begin{lemma}\label{lem:parametrization}
    There exists a homeomorphism
    $$ \varphi:\R^2\times [0,\infty) \longrightarrow \overline{\rule{0cm}{0.31cm}W_\F\sspc}$$
    that satisfies $\varphi(z,0) = (z,z)$ and $\varphi(z,t) \in \phi_z^+$, for every $z \in \R^2$ and $t \in [0,\infty)$.
\end{lemma}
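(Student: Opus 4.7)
The plan is to construct $\varphi$ as a composition $\varphi = \Psi^{-1} \circ g^{-1}$, where $g$ is the homeomorphism from the previous lemma and $\Psi : \overline{\rule{0cm}{0.31cm}W_\F\sspc} \to W$ is the ``Whitney reparametrization''
$$\Psi(z, z') = \bigl(z,\, \mu(\phi_{z,z'})\bigr).$$
Both $g$ and $\Psi$ fix the base $\R^2$, so the required base-fixing property $\varphi(z,t) \in \phi_z^+$, as well as $\varphi(z,0) = (z,z)$ (using Whitney property (i)), will follow at once. Thus the whole content of the lemma is to show that $\Psi$ is a homeomorphism onto $W$.

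For well-definedness and bijectivity, the key is Whitney property (iv). For any $z' \in \phi_z^+$ the compact arc $\phi_{z,z'}$ is strictly contained in the half-line $\phi_z^+$, so $\mu(\phi_{z,z'}) < \tau(z)$ and $\Psi$ lands in $W$; moreover, if $z_1' \neq z_2'$ on $\phi_z^+$, then one of the corresponding compact arcs is strictly contained in the other, so their $\mu$-values are distinct. Surjectivity onto $\{z\} \times [0,\tau(z))$ follows from the intermediate value theorem, applied to the continuous map $z' \mapsto \mu(\phi_{z,z'})$ on the connected half-line $\phi_z^+$ (vanishing at $z$ and with supremum $\tau(z)$). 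Continuity of $\Psi$ itself is then a direct replay of the Hausdorff-convergence argument in the proof of Lemma \ref{tau}: for any $(z_n, z_n') \to (z,z')$ in $\overline{\rule{0cm}{0.31cm}W_\F\sspc}$, a trivializing chart around $\phi_{z,z'}$ yields $\phi_{z_n,z_n'} \to \phi_{z,z'}$ in the Hausdorff metric, and then Whitney property (v) converts this to $\mu(\phi_{z_n,z_n'}) \to \mu(\phi_{z,z'})$.

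The main obstacle is the continuity of $\Psi^{-1}$. Given $(z_n, t_n) \to (z_0, t_0)$ in $W$ with $\Psi^{-1}(z_n, t_n) = (z_n, z_n')$, I would fix a point $z_0''$ strictly beyond $z_0'$ on $\phi_{z_0}^+$ and a trivializing neighborhood $V$ of the compact arc $\phi_{z_0, z_0''}$. Since $\mu(\phi_{z_0,z_0''}) > t_0$, the argument of the previous paragraph yields a point $z_n''$ on $\phi_{z_n} \cap V$ with $\mu(\phi_{z_n,z_n''}) \to \mu(\phi_{z_0,z_0''})$, hence $\mu(\phi_{z_n,z_n''}) > t_n$ for $n$ large. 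This traps $z_n'$ inside the compact chart, so after passing to a subsequence one has $z_n' \to z_\infty$ for some $z_\infty$ on a leaf close to $\phi_{z_0}$. Continuity of $\Psi$ then forces $\mu(\phi_{z_0,z_\infty}) = t_0 = \mu(\phi_{z_0,z_0'})$, and the injectivity established above yields $z_\infty = z_0'$. Since every subsequence has the same accumulation point $z_0'$, the full sequence converges, establishing that $\Psi$ is a homeomorphism and completing the proof.
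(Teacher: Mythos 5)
Your proposal is correct and takes essentially the same approach as the paper: both define $\varphi$ (equivalently, its inverse $\Psi \circ g^{-1}$ via $\Psi(z,z') = (z,\mu(\phi_{z,z'}))$) through the Whitney function and the homeomorphism $g$, and both deduce bicontinuity from Hausdorff-convergence of arcs in trivializing charts together with Whitney property (v). The paper is terser, phrasing the argument as a local-homeomorphism claim on the chart $V_\F$ and deferring to the mechanism of Lemma \ref{tau}, whereas you unpack the continuity of $\Psi$ and $\Psi^{-1}$ via a global trapping-and-subsequence argument, but the underlying reasoning is the same.
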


\begin{proof}
    We define $\varphi$ as the map satisfying
    $$\varphi(z,t) = (z, z'_t),$$
    where $z'_t$ is the point in $\phi_z^+$ determined by the equation $g(z, \mu(\phi_{z,z'_t})) = (z, t)$. From the properties of the function $\mu$ described in Whitney's theorem, we know that $\varphi$ is a well-defined bijection between $\R^2\times [0,\infty)$ and $\overline{\rule{0cm}{0.31cm}W_\F\sspc}$.
    To show that $\varphi$ is a homeomorphism, we need to prove that it is a local homeomorphism.

    Let $(z,z') \in \overline{\rule{0cm}{0.31cm}W_\F\sspc}$, and consider a neighborhood $V\subset \R^2$ of the subarc $\phi_{z,z'}$ that trivializes the foliation $\F$ around this subarc. This allows us to define an open neighborhood of $(z,z')$ within $\overline{\rule{0cm}{0.31cm}W_\F\sspc}$, given by
    $$V_\F = \{(x,x') \in \overline{\rule{0cm}{0.31cm}W_\F\sspc} \mid x, x' \in V\}.$$
    Using an argument similar to the one used in the proof of Lemma \ref{tau}, we can use the properties of the function $\mu$ to show that $\varphi^{-1}(V_\F)$ is an open subset of $\R^2\times [0,\infty)$, and that 
    $$\varphi^{-1}\vert_{V_\F}: V_\F \longrightarrow \varphi^{-1}(V_\F)$$
    is a homeomorphism. This proves that $\varphi$ is a local homeomorphism, and since it is also a bijection, it is a homeomorphism. Finally, we observe that $\varphi(z,0) = (z,z)$ and $\varphi(z,t) \in \phi_z^+$ for every $z \in \R^2$ and $t \in [0,\infty)$, which concludes the proof.
\end{proof}

 The map $\varphi$ in Lemma \ref{lem:parametrization} is of called the \textit{Whitney parameterization} of the foliation $\F$.

\mycomment{0.3cm}

\subsection{Serre fibrations}

The definitions and results in this section follow those in Tammo Tom Dieck's book on Algebraic Topology \cite{tomDieck2008algebraicTopology}.

Let $E$ and $B$ be topological spaces. A (continuous) map $p:E\longrightarrow B$ is said to satisfy the \textit{homotopy lifting property} (HLP) for a topological space $X$ if
\begin{itemize}
    \item for any homotopy $H:X\times [0,1]\longrightarrow B$,
    \item and for any continuous map $\widetilde{H}_0:X\longrightarrow E$ satisfying $H_0 := H\vert_{X\times \{0\}} = p\circ \widetilde{H}_0$,
\end{itemize}
\mycomment{0.2cm}
there exists a homotopy $\widetilde{H}:X\times [0,1]\longrightarrow E$ that satisfies $H = p\circ \widetilde{H}$.

\begin{definition}[Serre fibrations] A map $p:E\longrightarrow B$ is said to be a \textit{Serre fibration} if it satisfies the homotopy lifting property (HLP) for every CW-complex $X$.
\end{definition}

\begin{remark}
    The definition of Serre fibration is equivalent to the one given in \cite{tomDieck2008algebraicTopology}, which demands Serre fibrations to satisfy HLP for every cube $I^n = [0,1]^n$, for $n\geq 0$.
\end{remark}

The notion of Serre fibration is a generalization of Fiber Bundle. In fact, every fiber bundle is a Serre fibration, but not every Serre fibration is a fiber bundle. The difference is that the fibers of a fiber bundle are given by a fixed topological space, while the fibers of a Serre fibration can vary from point to point in the base space, as long as they are homotopy equivalent to each other.

In the math literature, the word fibration is often used to refer to Hurewicz fibrations, which are maps $p:E\longrightarrow B$ that satisfy the homotopy lifting property (HLP) for every topological space $X$. However, in this work, we opt to work with Serre fibrations, as they are easier to manipulate and suitable for the investigation of homotopy groups.

The following theorem states that being a Serre fibration is a local property.

\begin{theorem}\textup{\cite[Theorem 6.3.3]{tomDieck2008algebraicTopology}}\label{thm:local_serre_fibration}
    Let $p:E\longrightarrow B$ be a continuous map and let $\mathcal V$ be an open cover of $B$. If the map $p\vert_{p^{-1}(V)}:p^{-1}(V)\longrightarrow V$ is a Serre fibration for every $V\in \mathcal V$, then $p$ is itself a Serre fibration.
\end{theorem}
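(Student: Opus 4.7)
The plan is to verify the homotopy lifting property for $p$ on every cube $I^n$, which suffices by the remark following the definition of Serre fibration. Fix a homotopy $H:I^n\times I\to B$ and a continuous lift $\widetilde H_0:I^n\to E$ of $H|_{I^n\times\{0\}}$. Since $I^n\times I$ is compact and $\{H^{-1}(V)\}_{V\in\mathcal V}$ is an open cover of it, the Lebesgue number lemma furnishes an integer $N$ and subdivisions $I^n=\bigcup_{i}C_i$ and $[0,1]=\bigcup_{j=1}^{N}[t_{j-1},t_j]$ into closed sub-cubes of side $1/N$ such that for every pair $(i,j)$ there exists $V_{i,j}\in\mathcal V$ with $H(C_i\times[t_{j-1},t_j])\subset V_{i,j}$.

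The inductive step requires a relative version of HLP that I would establish first. For any Serre fibration $q:E'\to V$, any homotopy $K:I^n\times I\to V$, and any continuous partial lift defined on a closed subcomplex of the form $I^n\times\{0\}\cup A\times I$ with $A\subset\partial I^n$, I claim that the partial lift extends to all of $I^n\times I$. This follows from the HLP applied to $q$ together with a standard deformation retraction of $I^n\times I$ onto $I^n\times\{0\}\cup A\times I$, yielding a homeomorphism of pairs $(I^n\times I,\,I^n\times\{0\}\cup A\times I)\cong(I^n\times I,\,I^n\times\{0\})$ through which the required lift is pulled back.

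Next, I would construct $\widetilde H$ by a double induction on $(j,i)$, ordered lexicographically after fixing any enumeration of the sub-cubes $C_i$. At stage $(j,i)$, $\widetilde H$ is already defined on the closed set $I^n\times[0,t_{j-1}]\cup\bigl(\bigcup_{i'<i}C_{i'}\bigr)\times[t_{j-1},t_j]$, and we must extend it across $C_i\times[t_{j-1},t_j]$. The existing partial lift restricts to a continuous map on
\[
A_{i,j}\;:=\;C_i\times\{t_{j-1}\}\cup\bigl(C_i\cap\bigcup_{i'<i}C_{i'}\bigr)\times[t_{j-1},t_j],
\]
which is a subcomplex of $\partial(C_i\times[t_{j-1},t_j])$ of the form required by the relative HLP of the previous paragraph. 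Because $p\circ\widetilde H=H$ maps $C_i\times[t_{j-1},t_j]$ into $V_{i,j}$ by the Lebesgue choice, the partial lift takes values in $p^{-1}(V_{i,j})$, and the hypothesis that $p|_{p^{-1}(V_{i,j})}$ is a Serre fibration produces the desired extension. Continuity of the assembled map follows from the pasting lemma applied to the closed covering $\{C_i\times[t_{j-1},t_j]\}$.

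The main obstacle is precisely the relative lifting statement in the second paragraph: although the hypothesis only supplies HLP with respect to the base $I^n\times\{0\}$, the induction forces us to lift against more complicated closed subcomplexes of $I^n\times I$. Once the homeomorphism-of-pairs trick is in place, the cube-by-cube induction proceeds mechanically and yields a global lift $\widetilde H:I^n\times I\to E$ extending $\widetilde H_0$, which shows that $p$ satisfies HLP for $I^n$ and hence, by the remark, is a Serre fibration.
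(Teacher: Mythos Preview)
The paper does not prove this theorem; it merely quotes it from \cite[Theorem 6.3.3]{tomDieck2008algebraicTopology} and uses it as a black box. Your argument is the standard proof (Lebesgue subdivision of $I^n\times I$, the pair-homeomorphism trick to upgrade HLP to a relative lifting extension property, then a double induction over the sub-cubes), and it is essentially the proof given in tom Dieck's book, so there is nothing to contrast.

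One small point worth tightening: the homeomorphism of pairs $(I^n\times I,\,I^n\times\{0\}\cup A\times I)\cong(I^n\times I,\,I^n\times\{0\})$ is usually stated for $A=\partial I^n$, whereas in your induction $A=C_i\cap\bigcup_{i'<i}C_{i'}$ is only a union of \emph{some} faces of $C_i$. This still works (any union of faces of a cube containing one full face is a retract in the required sense, and one can build the homeomorphism face by face), but it deserves a sentence of justification rather than being called ``standard''.
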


The main property of Serre fibrations that we will use in this work is a consequence of the long exact sequence of homotopy groups associated to a Serre fibration, stated below.

\begin{theorem}\textup{\cite[Theorem 6.3.2]{tomDieck2008algebraicTopology}}\label{thm:long_exact_sequence}
    Let $e_0 \in E$ and $b_0 = p(e_0) \in B$ be points in the total space and the base space of a Serre fibration $p:E\longrightarrow B$. Denote by $F = p^{-1}(b_0)$ the fiber of $p$ over the point $b_0$, and by $i:F\longrightarrow E$ the inclusion map. Then, we have a long exact sequence of homotopy groups of the form
    $$\cdots \longrightarrow \pi_n(F) \xrightarrow{i_*} \pi_n(E) \xrightarrow{p_*} \pi_n(B) \xrightarrow{\partial} \pi_{n-1}(F) \longrightarrow \cdots.$$
\end{theorem}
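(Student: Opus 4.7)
The plan is to follow the classical cubical construction of the boundary operator $\partial:\pi_n(B)\to\pi_{n-1}(F)$ and then verify exactness at each of the three types of terms. I model $\pi_n(X,x_0)$ by based homotopy classes of maps $(I^n,\partial I^n)\to(X,x_0)$, and use the Remark in the text that Serre fibrations have the HLP for every cube $I^m$. Decomposing $I^n=I^{n-1}\times I$ and writing $J^{n-1}=(\partial I^{n-1}\times I)\cup(I^{n-1}\times\{0\})$, a standard self-homeomorphism of $I^n$ carries $I^{n-1}\times\{0\}$ to $J^{n-1}$, so the HLP for $I^{n-1}$ translates into the following relative HLP: every map $f:I^n\to B$ together with a lift defined on $J^{n-1}$ extends to a lift on all of $I^n$. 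Given a representative $f:(I^n,\partial I^n)\to(B,b_0)$ of $[f]\in\pi_n(B)$, I apply this with constant initial lift $e_0$ on $J^{n-1}$, obtaining $\tilde f:I^n\to E$ with $\tilde f|_{J^{n-1}}\equiv e_0$; the restriction $\tilde f|_{I^{n-1}\times\{1\}}$ then takes values in $F$ and sends $\partial I^{n-1}$ to $e_0$, so it represents the class $\partial[f]\in\pi_{n-1}(F)$. Well-definedness and additivity of $\partial$ follow from the same relative HLP applied one dimension higher, either to a based homotopy between two representatives or to a cube concatenation.

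For exactness, I argue as follows. At $\pi_n(E)$, the identity $p_*i_*=0$ is immediate, and conversely if $p_*[g]=0$ for $g:(I^n,\partial I^n)\to(E,e_0)$, lifting a based null-homotopy $H:I^n\times I\to B$ of $p\circ g$ starting from $g$ yields $\tilde H$ whose top face $\tilde H_1$ lies in $F$; the side faces $\tilde H|_{\partial I^n\times I}$ also lie in $F$ and provide an explicit null-homotopy in $F$ of the boundary defect of $\tilde H_1$, which after rectification produces a based map $(I^n,\partial I^n)\to(F,e_0)$ whose class maps to $[g]$. At $\pi_n(B)$, the vanishing $\partial p_*=0$ holds because for any $g\in\pi_n(E)$ the map $g$ is itself an admissible lift of $p\circ g$ with $g|_{J^{n-1}}\equiv e_0$ and constant top face; conversely, if $\partial[f]=0$ there is a null-homotopy $K:I^{n-1}\times I\to F$ of $\tilde f|_{I^{n-1}\times\{1\}}$ rel $\partial I^{n-1}$, and concatenating $\tilde f$ with $K$ along the last coordinate produces a based map $(I^n,\partial I^n)\to(E,e_0)$ whose class projects to $[f]$. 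At $\pi_{n-1}(F)$, $i_*\partial=0$ is witnessed directly by $\tilde f$ viewed as a based null-homotopy in $E$; conversely, a null-homotopy $\tilde f:I^n\to E$ of $i\circ\alpha$ projects under $p$ to a map $f:(I^n,\partial I^n)\to(B,b_0)$ for which $\partial[f]=[\alpha]$ by construction.

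The main obstacle is not conceptual but combinatorial: every HLP invocation above requires assembling partial lifts on a union of cube faces into the initial data for a single cube, and verifying that compatibility with the constant lift at $e_0$ forces the new lift to respect the correct basepoint conditions on the correct faces. The key technical device used at every step is the iterated self-homeomorphism $(I^n,I^{n-1}\times\{0\})\cong(I^n,J^{n-1})$, which reduces all the relative lifting problems appearing in the construction of $\partial$ and in the exactness verifications to the plain HLP for cubes, which is exactly what the Serre fibration hypothesis guarantees.
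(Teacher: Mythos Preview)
The paper does not prove this theorem: it is quoted verbatim as \cite[Theorem 6.3.2]{tomDieck2008algebraicTopology} and used as a black box (only its Corollary~\ref{cor:simpli-connected-fibrations} is needed later). There is therefore no ``paper's own proof'' to compare against.

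That said, your sketch is the standard cubical proof one finds in tom Dieck or Hatcher, and the outline is sound. The one place I would tighten is the exactness argument at $\pi_n(E)$: the lift $\tilde H$ of a based null-homotopy of $p\circ g$ satisfies $p\circ\tilde H|_{\partial I^n\times I}\equiv b_0$ but you have not controlled $\tilde H$ on $\partial I^n\times I$ beyond that, so ``after rectification'' is doing real work. The clean way is to lift with the relative HLP you already set up, prescribing $\tilde H$ to be constant $e_0$ on $\partial I^n\times I\cup I^n\times\{1\}$ (this is again a $J^n$), so that the bottom face $\tilde H|_{I^n\times\{0\}}$ is a genuine based map into $F$ and the homotopy to $g$ is already rel $\partial I^n$. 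With that adjustment the argument is complete; for the purposes of this paper, though, a citation suffices.
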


In the theorem above, the map $p_*$ is the homomorphism induced by the map $p:E\longrightarrow B$, and $\partial$ is the connecting homomorphism (see Section 6.3 in \cite{tomDieck2008algebraicTopology} for more details). 

Observe that, in the case $\pi_1(B,b_0) = \pi_1(F,e_0) = 0$, we obtain the trivial exact sequence
$$0  \longrightarrow \pi_1(E,e_0) \longrightarrow 0,$$
which implies that $\pi_1(E,e_0) = 0$. This yields the following corollary.

\begin{corollary}\label{cor:simpli-connected-fibrations}
    Let $p:E\longrightarrow B$ be a Serre fibration, and assume that $B$ and $F= p^{-1}(b_0)$ are simply-connected spaces. Then, the total space $E$ is also simply-connected.
\end{corollary}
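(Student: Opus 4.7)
The plan is to apply Theorem \ref{thm:long_exact_sequence} directly and read off the vanishing of both $\pi_0(E)$ and $\pi_1(E,e_0)$. Fix a basepoint $e_0 \in E$ and set $b_0 = p(e_0)$, and denote the fiber $F = p^{-1}(b_0)$. The key portion of the long exact sequence associated to the Serre fibration $p$ is
$$\pi_1(F,e_0) \xrightarrow{i_*} \pi_1(E,e_0) \xrightarrow{p_*} \pi_1(B,b_0) \xrightarrow{\partial} \pi_0(F) \xrightarrow{i_*} \pi_0(E) \xrightarrow{p_*} \pi_0(B).$$
By hypothesis $B$ and $F$ are simply-connected, so in particular they are path-connected with trivial fundamental groups, giving $\pi_1(F,e_0) = \pi_1(B,b_0) = 0$ and $\pi_0(F) = \pi_0(B) = 0$ (viewed as pointed sets).

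Exactness at $\pi_1(E,e_0)$ (surrounded by zeros) immediately yields $\pi_1(E,e_0) = 0$. Similarly, exactness at $\pi_0(E)$ (again surrounded by zeros in the sense of pointed sets) forces $\pi_0(E) = 0$, i.e. $E$ is path-connected. Combining the two gives the statement. I expect no real obstacle: the only subtle point is the bookkeeping around $\pi_0$, which is not a group, so one has to interpret exactness at this stage in the category of pointed sets rather than groups; this is already part of the statement of Theorem \ref{thm:long_exact_sequence} and requires no further work.
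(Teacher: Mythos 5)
Your proof is correct and takes essentially the same route as the paper: both read off $\pi_1(E,e_0)=0$ from the segment $\pi_1(F)\to\pi_1(E)\to\pi_1(B)$ of the long exact sequence of Theorem \ref{thm:long_exact_sequence}. You are in fact slightly more careful than the paper, which only displays the $\pi_1$ segment and does not explicitly address the path-connectedness of $E$; your extra step at the $\pi_0$ level (with the correct caveat about exactness in pointed sets) is a welcome completion of the argument.
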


\subsection{Foliated Brouwer Theory}\label{sec:foliated_brouwer}
Let \( f:\mathbb{R}^2 \longrightarrow \mathbb{R}^2 \) be a \textit{Brouwer homeomorphism}, that is, an orientation-preserving and fixed-point free homeomorphism of the plane.

The orbit of a point \( x \in \mathbb{R}^2 \) under \( f \) is defined as the set
$$ \mathcal{O}(f, x) = \{f^n(x) \mid n \in \mathbb{Z}\} \subset \mathbb{R}^2.$$
Sometimes, it may be useful to denote the set of orbits of \( f \) by $\orb$. As proved in \cite{Brouwer}, every orbit of $f$ is wandering and, thus, it has an empty limit set. This means that every orbit of $f$ is the image of a proper embedding of $\mathbb{Z}$ into the plane.

A \textit{Brouwer line} for \( f \) is an oriented topological line \( \lambda\) on \( \mathbb{R}^2 \) that satisfies
$$ f(L(\lambda)) \subset L(\lambda) \quad \text{ and } \quad f^{-1}(R(\lambda)) \subset R(\lambda),$$
where \( L(\lambda) \) and \( R(\lambda) \) denote the left and right sides of \( \lambda \). The classical \textit{Brouwer translation theorem} \cite{Brouwer} asserts that through every point of the plane passes a Brouwer line for \( f \).

\begin{theorem*}[Le Calvez, \cite{lec1}] There exists an oriented topological foliation \(\F\) of the plane such that every leaf $\phi \in \F$ is a Brouwer line of $f$.
\end{theorem*}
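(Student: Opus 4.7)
My plan is to construct the foliation $\F$ as the refinement of a maximal family of pairwise disjoint Brouwer lines, obtained locally via Brouwer's translation theorem and assembled globally by a Zorn's lemma argument together with a topological patching step. First, I would invoke the translation theorem in the following strong form: every $x \in \R^2$ is contained in a \emph{free brick} $U_x$, that is, an open topological disk with $f(U_x) \cap U_x = \varnothing$, and $U_x$ admits a properly embedded Brouwer line $\lambda_x$ through $x$, oriented so that $f$ sends $\lambda_x$ into $L(\lambda_x)$. These local models will play the role of germs of leaves for $\F$.

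Second, I would order the collections of pairwise disjoint properly embedded Brouwer lines by inclusion and apply Zorn's lemma to extract a maximal such collection $\Lambda$. The crux is then to show that $\overline{\bigcup \Lambda} = \R^2$ and that each connected component of the complement of $\bigcup \Lambda$ is a strip homeomorphic to $\R \times (0,1)$ whose two boundary components belong to $\Lambda$. Inside each such strip, the local theorem applied to a locally finite cover by free bricks would allow one to refoliate by parallel Brouwer lines. The refined family $\Lambda'$ should then admit local product charts, which one could exhibit by constructing a Whitney-type parametrization as in Lemma \ref{lem:parametrization}; this would promote $\Lambda'$ to an oriented $C^{\sspc 0}$ foliation $\F$.

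Third, I would verify that every leaf $\phi \in \F$ is a Brouwer line. For leaves in the family $\Lambda'$ this is immediate by construction, while leaves obtained as Hausdorff limits inherit the Brouwer property from the fact that $f$ is a homeomorphism and that the inclusions $f(L(\phi)) \subset L(\phi)$ and $f^{-1}(R(\phi)) \subset R(\phi)$ pass to Hausdorff limits of properly embedded lines.

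The main obstacle lies squarely in the second step. A naive Zorn-type construction typically produces only a lamination rather than a foliation, and there is no reason a priori why the complement of a maximal disjoint family of Brouwer lines should decompose into the nice strips required for refoliation: wild accumulation of leaves, or ``branching'' configurations at boundary points, could in principle occur and destroy local triviality. Ruling out such behaviour is precisely what makes the theorem delicate, and this is where Le Calvez in \cite{lec1} replaces the abstract Zorn argument by a careful brick decomposition of the plane adapted to $f^{\pm 1}$, using the induced combinatorics to exclude pathological local leaf configurations and to orient the eventual leaves consistently. I would expect the bulk of the effort to go into controlling this local-to-global step.
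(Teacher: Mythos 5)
You correctly diagnose the fatal gap in your own proposal, and your diagnosis is accurate: a maximal pairwise-disjoint family of Brouwer lines $\Lambda$ produced by Zorn's lemma is, in general, only a lamination, and nothing forces the complementary components to be product strips $\R\times(0,1)$ bounded by two leaves of $\Lambda$. Leaves of $\Lambda$ can accumulate on one another in a Reeb-like pattern, so that a complementary component is bounded by infinitely many leaves with no room to insert another Brouwer line, yet without contradicting maximality; and even where a component is a strip, there is no guarantee that a Brouwer line can be placed inside it in a way that glues continuously with the frontier leaves. Your step 3 is also more delicate than stated: Zorn gives maximality, not closedness, so $\overline{\bigcup\Lambda}$ may contain Hausdorff-limit sets that are not properly embedded lines at all, and even when they are, the strict freeness $f(\lambda)\cap\lambda=\varnothing$ does not pass to limits without extra control. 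So the proposal, as you suspect, does not close.

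What Le Calvez does in \cite{lec1} is structurally different and never passes through the ``lamination plus complementary refoliation'' stage. He starts from a \emph{maximal free brick decomposition} of the plane: a locally finite cellular decomposition into closed topological disks (bricks), each free under $f$, and maximal in the sense that merging any two adjacent bricks destroys freeness. On the bricks one defines the relation generated by $B\le B'$ when $f(B)\cap B'\neq\varnothing$; the combinatorial heart of the proof (going back to Sauzet's thesis) is that maximality and freeness force this relation to be a genuine order, so that for each brick the up-set is an attractor, the down-set is a repeller, and the frontiers of these sets are Brouwer lines drawn in the one-skeleton of the decomposition. The leaves of the foliation are then produced \emph{simultaneously} with their local product charts, one for each edge of the brick decomposition, so the local triviality that your scheme has to recover a posteriori is built in from the start. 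This is precisely what lets the brick method succeed where the abstract Zorn argument on families of disjoint Brouwer lines stalls.
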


Any such foliation of the plane by Brouwer lines of \( f \) is called a \textit{transverse foliation} of \( f \). It is worth noting that a Brouwer homeomorphism admits several transverse foliations, which  may exhibit significantly different structures and non-homeomorphic leaf spaces, for instance.

Let us fix a transverse foliation $\F$ of a Brouwer homeomorphism \( f \). In this contex, one can show that every point \( x \in \mathbb{R}^2 \) may be connected to its image \( f(x) \) by a path $\gamma_x:[0,1]\longrightarrow \R^2$ that is positively transverse to the foliation $\F$, in the sense that $\gamma_x$ intersects each leaf of $\phi \in \F$ at most once, always from right $R(\phi)$ to left $L(\phi)$. By concatenating the paths $\gamma_{f^n(x)}$ for all integers $n\in \Z$, we obtain a positively transverse path 
$$ \Gamma := \prod_{n\in \Z} \gamma_{f^n(x)},$$
that connects the entire orbit $\O = \O(f,x)$, called a \textit{transverse trajectory} of $\O$ (relative to $\F$). 
It is worth noting that any orbit $\O$ admits uncountably many transverse trajectories, which depend on the choice of paths $\{\gamma_{f^n(x)}\}_{n \in \Z}$. However, all these transverse trajectories cross the same leaves of $\F$. We say an orbit of $f$ \textit{crosses} a leaf of $\F$ if its transverse trajectories do so.

We note that, if the choice of paths $\{\gamma_{f^n(x)}\}_{n \in \Z}$ is locally-finite on the plane, then the resulting transverse trajectory $\Gamma$ is a topological line, and it is called a \textit{proper transverse trajectory} of  $\O$. In a previous work \cite{schuback1}, we proved the following result.

\begin{theorem}\textup{\cite{schuback2}}\label{thm:proper} Every orbit of $f$ admits a proper transverse trajectory relative to $\F$.
\end{theorem}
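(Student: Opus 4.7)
The plan is to refine Le Calvez's construction of transverse paths so as to ensure local finiteness of the concatenation. Write $\O = \{x_n\}_{n \in \Z}$ with $x_n = f^n(x)$. By Brouwer's theorem every orbit is wandering, hence $\{x_n\}$ is closed and discrete in $\R^2$. Foliated Brouwer theory supplies, for each $n$, a positively transverse path $\gamma_n$ joining $x_n$ to $x_{n+1}$, so the only obstacle to properness is that the images of these paths may accumulate somewhere in the plane.

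To avoid accumulation I would exploit the flexibility in choosing each $\gamma_n$. Fix a countable locally finite open cover $\{U_i\}_{i\in\N}$ of $\R^2$ by trivializing charts of $\F$, together with an exhaustion $\R^2 = \bigcup_{j\geq 0} K_j$ by compact sets. The key technical ingredient is a shrinking lemma: given any positively transverse path $\gamma$ from $x_n$ to $x_{n+1}$ and any open neighborhood $V$ of its image, one can construct another positively transverse path from $x_n$ to $x_{n+1}$ whose image is contained in $V$. Inside a single trivializing chart this is straightforward, since positively transverse arcs are graphs over the leaf-space coordinate and admit small perturbations with fixed endpoints; the global statement follows by patching finitely many such local perturbations along a chain of trivializing charts covering the image of the original arc.

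With this lemma in hand one builds the $\gamma_n$ inductively: at step $n$, start from an arbitrary connecting transverse path and shrink it into a neighborhood $V_n$ whose closure meets only those charts $U_i$ which already intersect the compact set $K_{j(n)}$, for a sufficiently slowly growing index $j(n)$. Since the orbit is proper, only finitely many $x_n$ lie in each $K_j$; since $\{U_i\}$ is locally finite, each compact set meets only finitely many $U_i$; combining these facts one obtains that every compact subset of $\R^2$ is met by only finitely many $\gamma_n$, which is exactly local finiteness. A further perturbation within each chart, available because two distinct positively transverse arcs in a trivialization can always be made disjoint, ensures that the $\gamma_n$ meet only at common endpoints, so that $\Gamma = \prod_n \gamma_n$ is an embedded topological line.

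The main obstacle is the shrinking lemma itself: patching the chart-by-chart deformations while keeping the endpoints of each $\gamma_n$ fixed and transversality to $\F$ intact requires a careful compatibility argument, and this is where the Whitney parametrization $\varphi$ of Lemma \ref{lem:parametrization}, together with the lower semi-continuity of $\tau$ from Lemma \ref{tau}, play the central role by providing a uniform way to measure and compare local modifications across overlapping charts.
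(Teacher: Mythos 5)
The paper does not itself prove this theorem; it cites it from the earlier work [schuback2], so there is no in-text proof to compare against. Reviewing the proposal on its own merits:

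There is a genuine gap in your ``shrinking lemma.'' As you state it — given a positively transverse path $\gamma$ from $x_n$ to $x_{n+1}$ and an open neighborhood $V$ of its image, produce a positively transverse path from $x_n$ to $x_{n+1}$ contained in $V$ — the lemma is vacuous: $\gamma$ itself already lies in $V$, so it yields no control whatsoever on where the replacement arc goes. What your inductive scheme actually requires is the much stronger claim that, once $x_n$ and $x_{n+1}$ lie outside a given compact $K$, a positively transverse arc joining them can be rerouted to avoid $K$. That is the real content of the theorem: the foliation $\F$ can force a transverse arc between two Euclidean-nearby points to make a long excursion (for instance, around a Reeb component of $\F$), and exactly such forced excursions are what could make the concatenation fail to be locally finite. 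Chart-by-chart perturbations only move an arc within a neighborhood of itself and cannot steer it around a compact obstacle, so the proposal never engages with the actual obstruction.

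The bookkeeping is also oriented the wrong way. Local finiteness asks that, for each compact $K$, only finitely many $\gamma_n$ meet $K$, so one wants to arrange $\gamma_n \subset \R^2 \setminus K_{j(n)}$ with $j(n) \to \infty$. You instead confine $\gamma_n$ to the union of charts meeting $K_{j(n)}$, a region that \emph{grows} with $n$; this does nothing to keep infinitely many $\gamma_n$ out of a fixed compact set. Finally, the closing appeal to the Whitney parametrization (Lemma \ref{lem:parametrization}) and the lower semi-continuity of $\tau$ (Lemma \ref{tau}) is not connected to the construction: in this paper those tools serve to topologize the set $W_\F$ and establish its simple-connectedness, and they offer no mechanism for routing transverse arcs away from compacta.
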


\subsection{Homotopy Brouwer Theory}\label{sec:homotopy_brouwer_theory}
Let \( f:\mathbb{R}^2 \longrightarrow \mathbb{R}^2 \) be a \textit{Brouwer homeomorphism}, and let $\OO = \{\O_{\sspc 1},\ldots,\O_{\sspc r}\}$ be a finite collection of distinguished orbits of $f$.  By abuse of notation, we may denote $\OO := \bigcup_{i=1}^r\O_{\sspc i}\subset \R^2$.
The isotopy class of $f$ relative to the set $\OO$, denoted by $\class{f,\sspc \OO}$,\break is referred to as a \emph{Brouwer mapping class relative to $r$-orbits}. Alternatively, $\class{f,\sspc \OO}$ can be defined as the isotopy class of the map $f\vert_{\R^2\setminus \OO}$ on the surface $\R^2\setminus \OO$, which is homeomorphic to $\R^2\setminus \Z$, because $\OO$ is an infinite closed discrete subset of $\R^2$ (see \cite[Proposition 1.5]{BeguinCrovisierLeRoux2020}).

Two Brouwer mapping classes \(\class{f,\OO}\) and \(\class{f',\OO'}\) are conjugate, denoted \(\class{f,\OO} \simeq \class{f',\OO'}\), if there exists an orientation-preserving homeomorphism $h:\R^2\longrightarrow \R^2$ that satisfies
$$ h(\OO^\pp)=\OO \quad \text{ and } \quad \bigl[\spc f,\sspc \OO\spc\bigr] = \bigl[\spc h\circ f' \circ h^{-1},\sspc \OO\spc\bigr].$$
Our notion of conjugacy follows the one in \cite{BAVARD_2017}, which is the most natural one when seeing Brouwer mapping classes as elements of $\mcg(\R^2\setminus \Z)$. In \cite{handel99}, Handel includes conjugacies by orientation-reversing homeomorphisms, and in \cite{LEROUX_2017}, Le Roux only considers orientation-preserving conjugacies that preserve the indexation of the orbits. These differences are not a problem, as the statements of the result can be adapted depending on the context.

For each $i,j \in \Z$, we denote $\sspc \Z_{\sspc i} := \Z \times \{i\}$ and $\Z_{\sspc i,j} := \Z \times \{i,j\}$. Let $T$ be the horizontal translation, that moves each point in the plane one unit to the right, and the map $R$ is the time-one map of the standard Reeb flow, that moves $\Z_1$ the right and $\Z_2$ one to the left.

\mycomment{0.3cm}
\begin{figure}[h!]
    \center 
    \hspace*{-0.4cm}\begin{overpic}[width=4cm, height=2.5cm, tics=10]{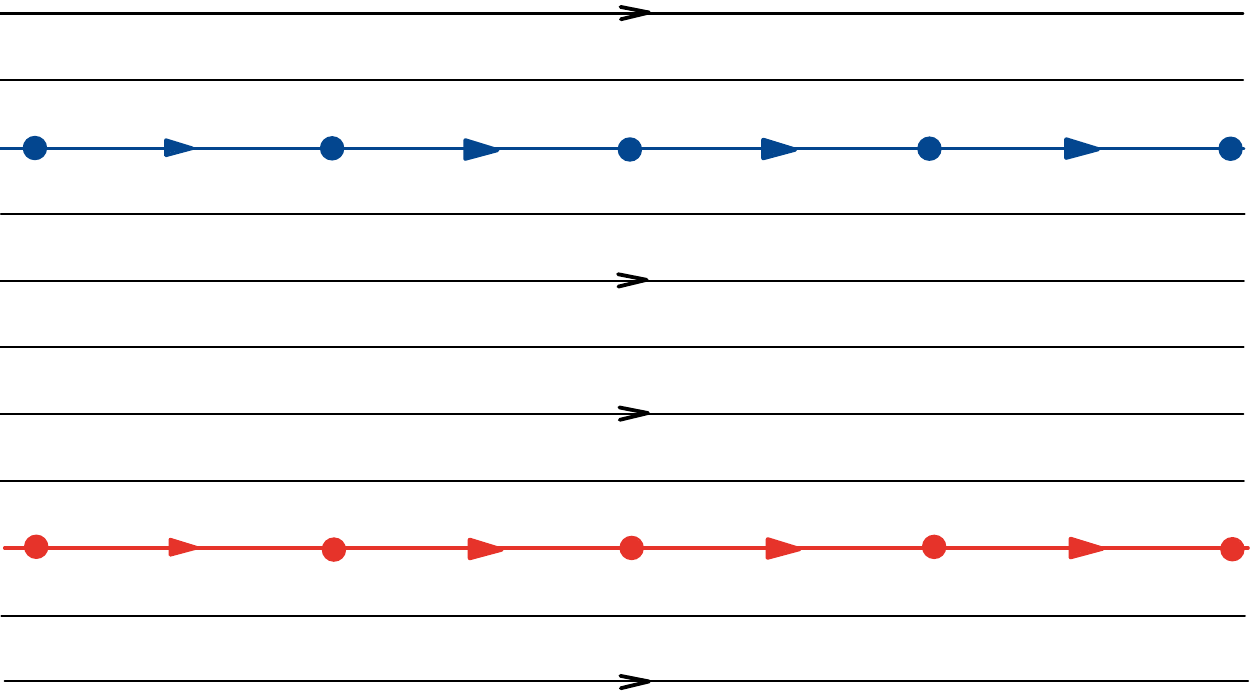}
        \put (-15,56) {{\color{black}\Large$\displaystyle T$}}
        \put (105,45) {{\color{myBLUE}\large$\displaystyle \Z_2$}}
        \put (105,9.5) {{\color{myRED}\large$\displaystyle \Z_1$}}
\end{overpic}
\hspace*{2.9cm}\begin{overpic}[width=4cm, height=2.5cm, tics=10]{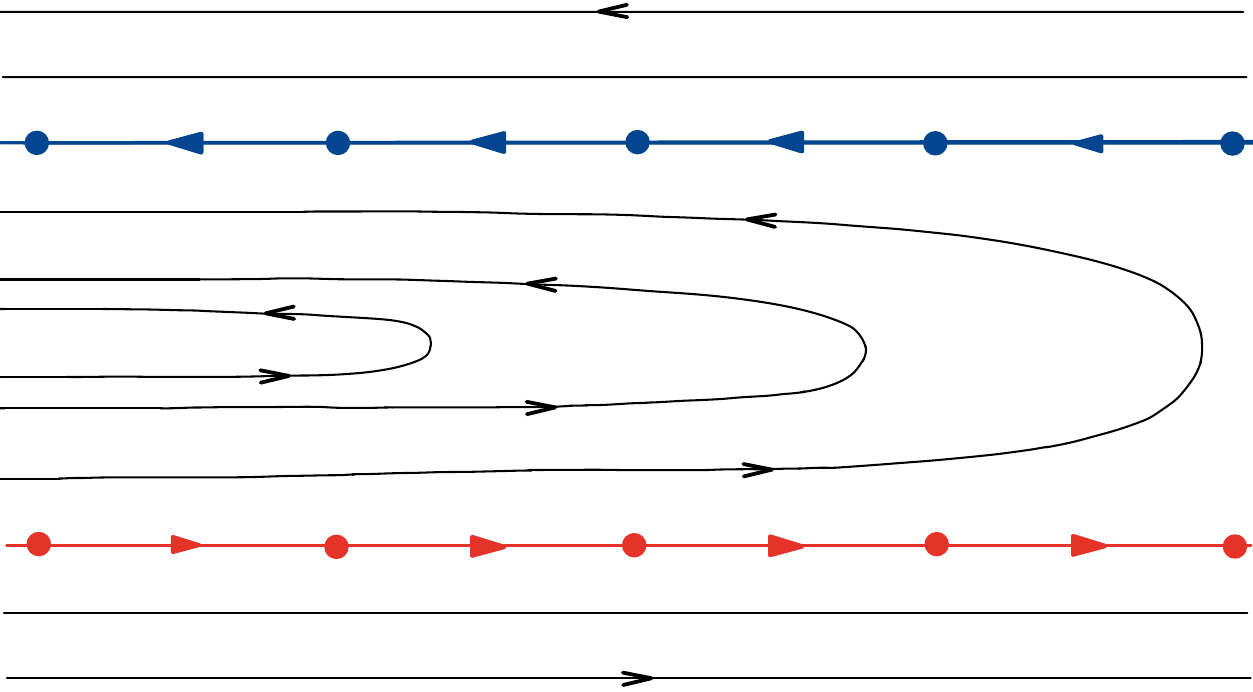}
    \put (-15,56) {{\color{black}\Large$\displaystyle R$}}
    \put (105,45) {{\color{myBLUE}\large$\displaystyle \Z_2$}}
        \put (105,9.5) {{\color{myRED}\large$\displaystyle \Z_1$}}
\end{overpic}
\end{figure}

Handel's classification theorem for Brouwer mapping classes relative to two orbits \cite{handel99} proves that there exists an orientation-preserving homeomorphism $h:\R^2\longrightarrow \R^2$ that maps $h(\O_{\sspc i}) = \Z_i$, for each $i\in\{1,2\}$, and conjugates $\big[\spc f,\sspc\{\O_{\sspc 1},\O_{\sspc 2}\}\spc\big]$ to one of the following classes:
\mycomment{-0.0cm}
$$ \bigl[\spc T\sspc,\spc\Z_{1,2}\sspc\bigr]\sspc, \quad \class{T^{-1},\spc\Z_{1,2}}\sspc, \quad \bigl[\spc R\sspc,\spc\Z_{1,2}\sspc\bigr]\sspc, \quad \class{R^{-1},\spc\Z_{1,2}}\sspc.$$
The set of all such homeomorphisms $h$ for $f$, $\O_1$ and $\O_2$ is denoted by $\textup{Handel}(\sspc f,\O_{\sspc 1},\O_{\sspc 2} )$.

In a previous work \cite{schuback2}, we have combined the Foliated and Homotopy frameworks in Brouwer Theory. This approach allows us to study Brouwer mapping classes through transverse foliations, leading to new insights and results. 

\begin{theorem}\textup{\cite{schuback2}}\label{thm:free} Let $\F$ be a transverse foliation of $f$. If there exists a leaf $\phi \in \F$ that is crossed by both orbits $\O_{\sspc 1}$ and $\O_{\sspc 2}$, then there exists an isotopy $(f_t)_{t\in [0,1]}$ relative to $\O_{\sspc 1} \cup \O_{\sspc 2}$,\break that satisfies $\textup{Fix}(f_t) = \varnothing$ for every $t \in [0,1]$, and joins $f_0 = f$ to a map $f_1$ conjugate to $T$.
\end{theorem}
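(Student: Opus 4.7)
The strategy splits into two stages: first identifying the Brouwer mapping class $[f,\{\O_{\sspc 1},\O_{\sspc 2}\}]$ via Handel's classification, and then realizing an appropriate fixed-point-free isotopy using the transverse foliation $\F$ as the engine of the deformation.

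For the identification, Handel's classification reduces the possibilities to $[T^{\pm 1},\Z_{1,2}]$ or $[R^{\pm 1},\Z_{1,2}]$, and the common-leaf hypothesis must select $[T,\Z_{1,2}]$. The Reeb classes $[R^{\pm 1},\Z_{1,2}]$ are ruled out because the two orbits in the Reeb model move asymptotically in opposite horizontal directions and are separated by a reducing Brouwer line, so no single Brouwer line is positively crossed by both orbits; the common leaf $\phi$ would provide such a line, giving a contradiction. The class $[T^{-1},\Z_{1,2}]$ is distinguished from $[T,\Z_{1,2}]$ by the sense in which orbits cross transverse Brouwer lines: the hypothesis that $\phi \in \F$ is crossed from $R(\phi)$ to $L(\phi)$ by both orbits is incompatible with the $T^{-1}$ model, whose orbits induce the opposite crossing orientation on any transverse line. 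Hence $[f,\{\O_{\sspc 1},\O_{\sspc 2}\}]$ is conjugate to $[T,\Z_{1,2}]$.

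For the isotopy, I would use $\F$ itself to build the deformation. By Lemma \ref{lem:parametrization}, the Whitney parametrization $\varphi: \R^2\times[0,\infty)\longrightarrow\overline{W_\F}$ gives, for every $s\geq 0$, a canonical leafwise shift $\Phi_s:\R^2\longrightarrow\R^2$ sending $z$ to the point at Whitney parameter $s$ along $\phi_z^+$; since leaves of $\F$ are Brouwer lines, each $\Phi_s$ with $s>0$ is fixed-point-free and preserves positive transversality to $\F$. The plan is to deform $f$ in two steps relative to $\O_{\sspc 1}\cup\O_{\sspc 2}$: first use Theorem \ref{thm:proper} to fix proper transverse trajectories $\Gamma_i$ of the orbits, and straighten $f$ within a tubular neighborhood of $\Gamma_1\cup\Gamma_2$ into a leafwise-shift form; then compose with $\Phi_s$ to interpolate between this straightened model and a homeomorphism conjugate to $T$ via an element of $\textup{Handel}(f,\O_{\sspc 1},\O_{\sspc 2})$. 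At every intermediate time $t$, the leaves of $\F$ remain Brouwer lines for $f_t$, which forces $\textup{Fix}(f_t)=\varnothing$.

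The principal obstacle lies in the second stage: the equality of mapping classes alone does not produce a fixed-point-free isotopy, since generic isotopies within a Brouwer mapping class freely introduce and cancel pairs of fixed points. The common-leaf hypothesis is precisely what allows the local leafwise pushes given by $\F$ to globalize into a coherent deformation: it forces the two orbits to share a consistent forward direction along the foliation, so that a single positive push along $\F$ can simultaneously carry both orbits toward their translated positions. Verifying the two compatibility conditions—that the push respects the orbits $\O_{\sspc 1}$ and $\O_{\sspc 2}$ pointwise and that positive transversality to $\F$ is preserved at every time $t$—is where I expect the technical effort to concentrate.
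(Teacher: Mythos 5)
This theorem is not proved in the present paper: it is cited from \cite{schuback2} and used as a black box, so there is nothing here to compare your argument against. Assessing it on its own, the two-stage structure (identify the Handel class, then build the isotopy) is a reasonable outline, but Stage~1 is the lighter half: since $\class{T,\Z_{1,2}}$ and $\class{T^{-1},\Z_{1,2}}$ are already conjugate by the orientation-preserving map $(x,y)\mapsto(-x,3-y)$, the distinction you draw between them is immaterial for a conclusion that only asks for conjugacy to $T$, and the claim you use to exclude the Reeb class --- that in the Reeb model no Brouwer line is positively crossed by both orbits --- is itself a nontrivial assertion that would require its own proof.

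The genuine gap is in Stage~2, precisely where the statement has content beyond mapping-class equality. The leafwise shift $\Phi_s$ supplied by the Whitney parametrization moves \emph{every} point of the plane when $s>0$, hence moves all of $\O_1\cup\O_2$; so any deformation built by composing $f$ with these shifts does not agree with $f$ on $\O_1\cup\O_2$, and the isotopy fails to be relative to the orbits, which the theorem requires. One would need a leaf-preserving family damped to the identity along the orbits --- already a different construction than the one you invoke --- and even granting that, nothing in the proposal explains why the endpoint $f_1$ is conjugate to a translation: the phrase ``straighten $f$ into a leafwise-shift form'' is undefined, and composing $f$ with leaf-preserving pushes just produces another Brouwer homeomorphism transverse to $\F$, not automatically a translation. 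The theorem's entire force lies in producing a \emph{fixed-point-free} path ending at a translation, rather than a mere mapping-class identity; the two ``compatibility conditions'' you list at the end are not checks to be verified but precisely the construction that a proof must supply and that your sketch does not.
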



\mycomment{-0.9cm}
\subsection{The Le Roux index for Brouwer homeomorphisms}

For the reader’s convenience, we repeat here the definition of the Le Roux index, previously given in the introduction.

\mycomment{0.2cm}

Let $f:\R^2\longrightarrow \R^2$ be a Brouwer homeomorphism, and let $\O_{\sspc 1}$ and $\O_{\sspc 2}$ be two orbits of $f$.\break Let $X_f$ be the $C^0$-vector field on the plane $X_f(x) := f(x)-x$, and let $h\in\text{Handel}(f,\O_{\sspc 1},\O_{\sspc 2})$. Observe that the vector field $X_{h\sspc\circ\sspc f \sspc\circ\sspc h^{-1}}$
is horizontal on $\Z_1$ and $\Z_2$. Thus, the winding number\break $\text{Wnd}(X_{h\sspc\circ\sspc f \sspc\circ\sspc h^{-1}},\alpha)$ along any path $\alpha$ joining $\Z_1$ to $\Z_2$ must be an integer multiple of $1/2$.\break In \cite{LEROUX_2017}, Le Roux shows that this value is independent of the choice of $h\in \text{Handel}(f,\O_{\sspc 1},\O_{\sspc 2})$ and $\alpha$, and therefore defines an index $\ind(\sspc f,\O_{\sspc 1},\O_{\sspc 2} )$, called the \textit{Le Roux index}.
\vspace*{0.2cm}

\begin{figure}[h!]
    \hspace*{-1.2cm}\begin{overpic}[width=8.5cm, height=3.6cm, tics=10]{LEROUXindex.pdf}
        \put (28,43.8) {{\color{black}\large$\displaystyle X_{h\sspc\circ\sspc f \sspc\circ\sspc h^{-1}}$}}
        \put (105,37.2) {{\color{black}\large$\displaystyle \Z_2 = h(\O_2)$}}
        \put (105,1.8) {{\color{black}\large$\displaystyle \Z_1=h(\O_1)$}}
        \put (60,20) {{\color{myBLUE}\large$\displaystyle h\circ f \circ h^{-1} \circ \alpha$}}
        \put (18,26) {{\color{myRED}\large$\displaystyle \alpha$}}
\end{overpic}
\end{figure}

\vspace*{-0.2cm}
\noindent This index extends the classical Poincaré-Hopf index to the context of Brouwer homeomorphisms. 

It is worth mentioning that this index is not a conjugacy invariant for Brouwer mapping classes relative to two orbits. However, from its definition, we note that it is invariant under fixed point-free isotopies relative to the two orbits.

\section{Topological angle function}


\mycomment{0.5cm}

The Khalimsky topology on $\Z$ is the coarsest topology on $\Z$ making all singletons of the form $\{2k+1\}$ and all triplets of the form $\{2k-1, 2k, 2k+1\}$ open sets, where $k \in \Z$. This topology is also known as the \textit{digital topology}. The main feature of this topology is that odd integers admit an isolating neighborhood, while even integers do not. In \cite{CalvezTwist}, Le Calvez uses this topological feature to define a notion of topological angle function for radial foliations of the annulus, with the goal of providing an abstract point of view on twist maps of the annulus. In this chapter, we will use this same topological feature to describe the relative position of points with respect to a non-singular planar foliation.

Let $\ZZ=\{\sspc-\dot{1}\sspc, \spc \dot 0\sspc, \spc \dot 1\sspc, \spc \dot 2\sspc\}$ be the set of integers $\text{mod } 4$, endowed with the topology $\mathcal K$ induced by the quotient $\sspc\text{mod}_4: \Z \longrightarrow \ZZ$. More precisely, this topology is given by
$$ \mathcal K :=\left\{ \spc \varnothing\sspc, \spc \{\sspc-\dot{1}\sspc\}\sspc, \spc \{\sspc\dot 1\sspc\}\sspc, \spc \{\sspc-\dot{1}\sspc, \spc \dot 1\sspc\}\sspc, \spc \{\sspc-\dot{1}\sspc, \spc \dot 0\sspc, \spc \dot 1\sspc\}\sspc, \spc \{\sspc\dot{1}\sspc, \spc \dot 2\sspc, \spc -\dot {1}\sspc\}\sspc, \spc \{\sspc-\dot{1}\sspc, \spc \dot 0\sspc, \spc \dot 1\sspc, \spc \dot 2\sspc\}  \spc \right\}.$$
Let $\mathscr{F}$ be the space of all oriented topological (non-singular) foliations of the plane. Moreover, consider the diagonal $\Delta = \{(z,z) \in \R^2\times \R^2\}$ and the configuration space
$$ \R^2\times \R^2\setminus \Delta = \{(z,z') \in \R^2\times \R^2 \mid z\neq z'\}.$$

We introduce the topological angle function
$\sspc\dot\theta:(\R^2\times \R^2\setminus \Delta) \times \mathscr{F} \longrightarrow \ZZ\spc$, defined as
\mycomment{0.2cm}
\begin{equation*}
    \Tp(\sspc z\sspc , \sspc z'\sspc , \sspc \F\sspc ) = \begin{cases}
        \ \ \dot 0 \ &   \textup{if } z' \in \phi^+_z, \\
        \ \ \dot{1}\ &  \textup{if } z' \in L(\phi_z), \\
        \ \ \dot{2}\ &   \textup{if } z' \in \phi^-_z,\\
        \ -\dot{1}\ &   \textup{if } z' \in R(\phi_z).
    \end{cases}
\end{equation*}

\mycomment{-0.7cm}
\noindent Roughly speaking, the value $\Tp(z,z',\F)$ computes the relative position of a point $z'$ with respect to a basepoint $z$ and the foliation $\F$. As we will see throughout this work, this function is very useful for extracting topological information about the foliation $\F$.

\begin{lemma}\label{lemma:continuous}
    The following properties hold:
    \begin{itemize}
        \item[\textup{\textbf{(i)}}] For any orientation-preserving homeomorphism $h:\R^2\longrightarrow \R^2$, it holds that\
        \mycomment{-0.12cm}
        $$\Tp(\sspc z\sspc , \sspc z'\sspc , \sspc h\F\sspc ) = \Tp(\sspc h^{-1}(z)\sspc , \sspc h^{-1}(z')\sspc , \sspc\F\sspc ),$$

        \mycomment{-0.24cm}
        \noindent
        where $h\F$ is the foliation obtained by applying $h$ to each leaf of $\F$.

        \item[\textup{\textbf{(ii)}}] The function $\spc\dot\theta(\hspace*{0.08cm} \cdot \hspace*{0.08cm}, \hspace*{0.08cm}\cdot \hspace*{0.08cm}, \F )$ is continuous on the entire configuration space $\R^2\times \R^2\setminus \Delta$ if, and only if, the foliation $\F$ is trivial.
    \end{itemize}
\end{lemma}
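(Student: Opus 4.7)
My plan is to handle the two items separately, and within (ii) to treat the two directions independently.

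Part (i) will be a direct case-by-case verification. Since $h$ is orientation-preserving, for any oriented leaf $\phi$ of $\F$ the image $h(\phi)$ is an oriented leaf of $h\F$ satisfying $h(\phi^+) = h(\phi)^+$, $h(\phi^-) = h(\phi)^-$, $h(L(\phi)) = L(h(\phi))$ and $h(R(\phi)) = R(h(\phi))$. In particular, the leaf of $h\F$ passing through $z$ is $h(\phi_{h^{-1}(z)})$. Substituting these identities into each of the four defining cases of $\Tp(z,z',h\F)$ yields exactly the corresponding defining case of $\Tp(h^{-1}(z), h^{-1}(z'), \F)$.

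For the ``if'' direction of (ii), I reduce to the horizontal foliation $\F_0$ of $\R^2$. If $\F$ is trivial, there is an orientation-preserving homeomorphism $h$ with $h\F_0 = \F$; by (i),
\[
\Tp(z, z', \F) = \Tp(h^{-1}(z), h^{-1}(z'), \F_0),
\]
so it suffices to establish continuity of $\Tp(\cdot,\cdot,\F_0)$. From the explicit formula, $\Tp^{-1}(\{\dot 1\}) = \{y' > y\}$ and $\Tp^{-1}(\{-\dot 1\}) = \{y' < y\}$ are open half-spaces of $\R^2 \times \R^2 \setminus \Delta$, while $\Tp^{-1}(\{\dot 0\})$ and $\Tp^{-1}(\{\dot 2\})$ are contained in the closed subset $\{y = y'\} \setminus \Delta$. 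A direct check on each of the seven open sets of $\mathcal K$ confirms that its preimage is open.

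For the converse I argue by contraposition. If $\F$ is non-trivial, then by the classical description of plane foliations (Kaplan, Haefliger--Reeb) its leaf space is non-Hausdorff, so there exist two distinct leaves $\phi_1 \ne \phi_2$ that cannot be separated. Choose $z_1 \in \phi_1$, $z_2 \in \phi_2$; since $z_2 \notin \phi_1$, assume without loss of generality that $z_2 \in L(\phi_1)$, so that $\Tp(z_1, z_2, \F) = \dot 1$. Non-separation of $\phi_1$ and $\phi_2$ implies that the open saturations of $B(z_1, 1/n)$ and $B(z_2, 1/n)$ intersect for every $n$; this yields a sequence of leaves $\phi_n$ together with points $z_{n,1} \in \phi_n \cap B(z_1, 1/n)$ and $z_{n,2} \in \phi_n \cap B(z_2, 1/n)$. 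Then $(z_{n,1}, z_{n,2}) \to (z_1, z_2)$ and $\Tp(z_{n,1}, z_{n,2}, \F) \in \{\dot 0, \dot 2\}$ for every $n$, because $z_{n,2} \in \phi_{z_{n,1}}$. Since $\{\dot 1\}$ is open in $\mathcal K$, continuity at $(z_1, z_2)$ would force $\Tp(z_{n,1}, z_{n,2}, \F) = \dot 1$ for all large $n$, a contradiction.

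The main obstacle is the converse direction: one must invoke the structural fact that triviality of a plane foliation is equivalent to Hausdorffness of its leaf space, and combine it with the openness of the leaf-saturation map to produce the sequences $(z_{n,1}, z_{n,2})$ sitting on common leaves. Part (i) and the ``if'' direction of (ii) are, by contrast, essentially formal.
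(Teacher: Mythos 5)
Your proof is correct and follows essentially the same structure as the paper's: item (i) by direct substitution, the ``if'' direction of (ii) by reduction to the horizontal foliation, and the ``only if'' direction by producing a sequence of pairs lying on a common leaf that converges to a pair on distinct leaves, contradicting openness of $\{\dot 1\}$ and $\{-\dot 1\}$. The only implementation differences are that the paper factors $\dot\theta(\,\cdot\,,\,\cdot\,,\H)$ through the angle map $\sigma\colon\R/2\pi\Z\to\ZZ$ (a factorization it reuses later in the proposition relating $\Theta_F$ to the Le Roux index) where you check openness of the seven preimages directly, and it speaks of a Reeb component where you invoke non-Hausdorffness of the leaf space via open saturations.
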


\mycomment{-0.22cm}
\begin{proof}
    The first item follows from the definition of $\Tp$. To prove one part of item (ii), we can use item (i) to reduce the problem to the case of a horizontal foliation of $\R^2$.

    Let $\H$ be the horizontal foliation of $\R^2$, with each leaf oriented positively in the $x$-direction. Consider the function $\sspc\sigma:\R/2\pi\Z \longrightarrow \ZZ\spc$ defined by
    \mycomment{0.1cm}
    \begin{equation*}
    \sigma(t)= \begin{cases}
        \ \ \dot 0 \ &   \textup{if } t = 0, \\
        \ \ \dot{1}\ &  \textup{if } t \in (0, \pi), \\
        \ \ \dot{2}\ &   \textup{if } t =\pi,\\
        \ -\dot{1}\ &   \textup{if } t \in (\pi, 2\pi). \\
    \end{cases}
\end{equation*}

\noindent Observe that $\sigma$ is continuous with respect to the topology $\mathcal K$ on $\ZZ$.

To prove the ``if'' implication of item (ii), it suffices to note that
\mycomment{-0.12cm}
$$ \Tp(\sspc z\sspc , \sspc z'\sspc , \H ) = \sigma(\spc\textup{Angle}\sspc(z'-z)\spc),$$

\mycomment{-0.23cm}
\noindent
where $\textup{Angle}\sspc(z'-z)$ is the angle between the vector $z'-z$ and the positive $x$-axis, measured in the counterclockwise direction. Since $\textup{Angle}\sspc(z'-z)$ is continuous in both variables $z$ and $z'$, the map $\Tp(\sspc z\sspc , \sspc z'\sspc , \H )$ is continuous on $\R^2\times \R^2\setminus \Delta$ with respect to the topology $\mathcal K$ on $\ZZ$.

To prove the ``only if'' implication, one should observe that, if the foliation $\F$ is not trivial, then it must have a Reeb component. This allows us to consider two sequences of points $(z_n)_{n \geq 0}$ and $(z_n')_{n \geq 0}$ in $\R^2$ satisfying $z_n' \in \phi^+_{z_n}$ and such that
\mycomment{-0.1cm}
$$ z_n \longrightarrow z \in \R^2 \quad \text{ and } \quad z_n' \longrightarrow z' \in \R^2,$$

\mycomment{-0.23cm}
\noindent
where $z$ and $z'$ lie on different leaves of $\F$. In other words, we have that $\Tp(\sspc z\sspc , \sspc z'\sspc , \F ) \in \{-\dot{1}, \dot{1}\}$, while $\Tp(\sspc z_n\sspc , \sspc z_n'\sspc , \F ) = \dot{0}$ for all $n\geq 0$. Since $\{-\dot{1}\}$ and $\{\sspc\dot{1}\sspc\}$ are open sets for $\mathcal K$, this implies that the function $\Tp(\sspc \cdot\sspc , \sspc \cdot\sspc , \F )$ is not continuous at the point $(z,z')$. This concludes the proof.
\end{proof}

\subsection{Rewriting the Le Roux index}

The topological angle function provides an alternative way to define the index of a Brouwer homeomorphism between two orbits, originally introduced by Le Roux in \cite{LEROUX_2017}. This alternative definition is particularly interesting, as it establishes a common language to describe the index of a Brouwer homeomorphism and the structure of its transverse foliations.

Let $f:\R^2\longrightarrow \R^2$ be a Brouwer homeomorphism, and define the set 
\begin{equation*}
    W_f = \{(z,f(z)) \in \R^2\times \R^2 \mid z \in \R^2\}.
\end{equation*}
Since $f$ has no fixed points, $W_f$ is a subset of $\spc \R^2\times \R^2\setminus \Delta\spc$. Moreover, note that the set $W_f$ is homeomorphic to $\R^2$ via the map $\text{graph}_f:\R^2\longrightarrow W_f$ given by $\text{graph}_f(z) = (z,f(z))$. In particular, this shows that $W_f$ is a simply-connected subset of $\R^2\times \R^2\setminus \Delta$.

Let $\H$ be the horizontal foliation of $\R^2$ with each leaf oriented positively in the $x$-direction.
Since $W_f$ is simply-connected and $\spc\dot\theta(\hspace*{0.08cm} \cdot \hspace*{0.08cm}, \hspace*{0.08cm}\cdot \hspace*{0.08cm}, \H )$ is continuous on $\R^2\times \R^2\setminus \Delta$ (Lemma \ref{lemma:continuous}), the lifting theorem (see Theorem 3.5.2 in \cite{tomDieck2008algebraicTopology}) tells us that the function $\spc\dot\theta(\hspace*{0.08cm} \cdot \hspace*{0.08cm}, \hspace*{0.08cm}\cdot \hspace*{0.08cm}, \H )\vert_{W_f}$ admits a lift $\Theta_f:\W_f\longrightarrow \Z$, which is unique up to the addition of an integer multiple of $4$. This setting is represented in the following diagram.

\begin{center}
\hspace*{0.6cm}\begin{tikzpicture}[scale=0.81, node distance=2.2cm]
    \node (Wf) at (0,0) {$W_f$};
    \node (Z4) at (4.3,0) {$\ZZ$};
    \node (Z) at (4.3,2.8) {$\mathbb{Z}$};
    
    \draw[->, thick] ([xshift=1pt]Wf.east) -- ([xshift=-1.5pt]Z4.west) node[midway, below=0.1cm] {$\spc\dot\theta(\hspace*{0.08cm} \cdot \hspace*{0.08cm}, \hspace*{0.08cm}\cdot \hspace*{0.08cm}, \H )$};
    \draw[->, thick] ([yshift=1pt]Wf.north east) -- ([yshift=-1pt]Z.south west) node[midway, above left=0.05cm] {$\Theta_f$};
    \draw[->, thick] ([yshift=-2.5pt]Z.south) -- ([yshift=2.5pt]Z4.north) node[midway, right=0.1cm] {$\text{mod}_4$};
    
    \node at (2.85,0.9) {\LARGE$\circlearrowright$};
\end{tikzpicture}
\end{center}

\mycomment{-0.4cm}
Recall that, as a consequence of Handel's classification of Brouwer mapping classes relative to two orbits (see \cite{handel99}), for any two orbits $\O_1$ and $\O_2$ of $f$, there exists an orientation-preserving homeomorphism $h:\R^2\longrightarrow \R^2$ 
such that $h(\O_{\sspc i}) = \Z_{\sspc i}\sspc$ for $i \in \{1,2\}$ and 
\mycomment{0.2cm}
$$ \class{h\circ f \circ h^{-1},\Z_{1,2}} \in \left\{\begin{array}{lr}
         \sspc\class{T\spc,\spc\Z_{1,2}}\sspc, 
        \quad\class{T^{-1},\sspc\Z_{1,2}}\sspc,\\
         \sspc\class{R\spc,\spc\Z_{1,2}}\sspc,
        \quad\class{R^{-1},\sspc\Z_{1,2}}\sspc.
        \end{array}\right\}
$$

\mycomment{0.1cm}
\noindent The set of all such homeomorphisms $h$ for $f$, $\O_1$ and $\O_2$ is denoted by $\textup{Handel}(f, \O_1,\O_2)$.

\begin{proposition}
    Let $f:\R^2\longrightarrow \R^2$ be a Brouwer homeomorphism, and let $\O_1$ and $\O_2$ be two orbits of $f$. Then, for any $h \in \textup{Handel}(f, \O_1,\O_2)$, $x_1 \in \Z_1$ and $x_2 \in \Z_2$, we have that
    $$\textup{Ind}(f,\O_1,\O_2) = \frac{1}{4}\Bigl(\sspc\Theta_F(x_2,F(x_2)) - \Theta_F(x_1,F(x_1))\sspc\Bigr),$$
    where we denote $F:=h\circ f \circ h^{-1}$.
\end{proposition}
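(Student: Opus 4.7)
The plan is to bridge the Khalimsky lift $\Theta_F$ and a conventional $\R$-valued lift of the angle of the vector field $X_F := F - \textup{Id}_{\R^2}$ via an intermediate ``Khalimsky lift'' $\widetilde\sigma : \R \to \Z$ of $\sigma$, and then to use the rigid horizontal values of $X_F$ on $\Z_{1,2}$ to show that the boundary difference of $\Theta_F$ is exactly four times the winding number appearing in Le Roux's definition.

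More precisely, I would first observe that, since $F$ is fixed point free, $X_F$ is non-vanishing and its angle function $\textup{Angle}(X_F) : \R^2 \to \R/2\pi\Z$ admits a continuous real-valued lift $\widetilde\theta_F : \R^2 \to \R$ by simple connectedness of $\R^2$; this lift is unique up to $2\pi\Z$, and its restriction to any path computes the winding number of $X_F$ along that path. Second, because relative isotopies fix $\Z_{1,2}$ pointwise and $\class{F, \Z_{1,2}}$ is one of $\class{T, \Z_{1,2}}$, $\class{T^{-1}, \Z_{1,2}}$, $\class{R, \Z_{1,2}}$, $\class{R^{-1}, \Z_{1,2}}$, the restriction $F|_{\Z_{1,2}}$ coincides with that of the chosen model. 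In particular $X_F(x) \in \{(1,0), (-1,0)\}$ for every $x \in \Z_1 \cup \Z_2$, forcing $\widetilde\theta_F(x_i) \in \pi\Z$ for $i \in \{1, 2\}$.

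The heart of the argument is the construction of the continuous map $\widetilde\sigma : \R \to \Z$ (with target Khalimsky $\Z$) defined piecewise for $m \in \Z$ by
\begin{equation*}
    \widetilde\sigma(t) = \begin{cases}
        4m & \text{if } t = 2\pi m, \\
        4m+1 & \text{if } t \in (2\pi m, 2\pi m + \pi), \\
        4m+2 & \text{if } t = 2\pi m + \pi, \\
        4m+3 & \text{if } t \in (2\pi m + \pi, 2\pi(m+1)).
    \end{cases}
\end{equation*}
A direct check against the generators of $\mathcal K$ shows that $\widetilde\sigma$ is continuous, satisfies $\textup{mod}_4 \circ \widetilde\sigma = \sigma \circ \pi_{\R/2\pi\Z}$, and, crucially, takes the value $\widetilde\sigma(k\pi) = 2k$ for every $k \in \Z$. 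With $\widetilde\sigma$ in hand, the composition $\Theta_F' := \widetilde\sigma \circ \widetilde\theta_F \circ \textup{graph}_F^{-1} : W_F \to \Z$ is continuous and lifts $\Tp(\cdot, \cdot, \H)|_{W_F}$ along $\textup{mod}_4$, so by the uniqueness of such lifts on the simply-connected set $W_F$ we have $\Theta_F = \Theta_F' + 4 k_0$ for some constant $k_0 \in \Z$, which cancels in every difference.

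Combining $\widetilde\sigma(k\pi) = 2k$ with $\widetilde\theta_F(x_i) \in \pi\Z$ then gives
\begin{equation*}
    \Theta_F(x_2, F(x_2)) - \Theta_F(x_1, F(x_1)) = \frac{2}{\pi}\bigl(\widetilde\theta_F(x_2) - \widetilde\theta_F(x_1)\bigr),
\end{equation*}
whose right-hand side is $4$ times the winding number $\textup{Wnd}(X_F, \alpha)$ of $X_F$ along any path $\alpha$ from $x_1$ to $x_2$, which by the Le Roux definition equals $4 \cdot \textup{Ind}(f, \O_1, \O_2)$. Dividing by $4$ yields the claim. The main obstacle is the construction and continuity verification of $\widetilde\sigma$ against the Khalimsky topology: once this half-step bridge between the usual topology on $\R$ and the Khalimsky topology on $\Z$ is in place, everything else is a diagram chase combined with the rigid horizontal boundary values forced by the isotopy class of $F$ on $\Z_{1,2}$.
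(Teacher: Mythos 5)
Your proposal is correct and follows essentially the same strategy as the paper: both hinge on the intermediate ``Khalimsky lift'' $\widetilde\sigma$ bridging the $\R$-valued angle lift and the integer-valued lift $\Theta_F$, and on the horizontal boundary values of $X_F$ on $\Z_{1,2}$ forcing $\widetilde\sigma$ to take even values at the endpoints. The one genuine difference is that you lift $\textup{Angle}(X_F)$ \emph{globally} to $\widetilde\theta_F:\R^2\to\R$ and then identify $\Theta_F'=\widetilde\sigma\circ\widetilde\theta_F\circ\textup{graph}_F^{-1}$ with $\Theta_F$ up to a constant in $4\Z$ by uniqueness of lifts on the simply-connected $W_F$, whereas the paper fixes a single path $\alpha$ from $\Z_1$ to $\Z_2$, lifts only along that path to $\widetilde s$, and shows the discrepancy $Q=\widetilde\sigma\circ\widetilde s-\Theta_F\circ A$ is a continuous $4\Z$-valued, hence constant, function on $[0,1]$. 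Your global version is marginally cleaner because it avoids the path-wise diagram chase and lets the path $\alpha$ appear only at the very end when one unwinds the definition of the winding number; it also highlights that the identity is a statement about the map $F$ itself, not about any particular test path. Incidentally, your explicit case-by-case formula for $\widetilde\sigma$ is the correct one: the paper's stated formula $\widetilde\sigma(t)=\lfloor 2t/\pi\rfloor$ does not actually satisfy $\textup{mod}_4\circ\widetilde\sigma=\sigma\circ\textup{mod}_{2\pi}$ (e.g.\ at $t=\pi/4$ it gives $\dot 0$ rather than $\dot 1$), though the two properties the paper goes on to use are exactly the ones your $\widetilde\sigma$ does satisfy, so this does not affect the paper's argument.
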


\begin{proof}
    Recall that, as explained in the introduction, the index $\text{Ind}(f,\O_1,\O_2)$ is defined as the winding number of the vector field 
    $$X_{h \sspc \circ \sspc f \sspc \circ \sspc h^{-1}\sspc}(x) = h\circ f \circ h^{-1}(x) - x, \quad \forall x \in \R^2,$$
    over a path $\alpha:[0,1]\longrightarrow \R^2$ satisfying $\alpha(0) \in \Z_1$ and $\alpha(1) \in \Z_2$, for any $h\in \textup{Handel}(f, \O_1,\O_2)$.
    To be more precise, by considering the map $s:[0,1]\longrightarrow \R/2\pi\Z$ given by the expression
    $$ s(t) = \textup{Angle}\sspc\Bigl(\sspc h\circ f\circ h^{-1} (\alpha(t)) -\alpha(t)\sspc \Bigr),$$
    and fixing a lift $\widetilde{s}:[0,1]\longrightarrow \R$ of $s$ through the quotient map $\R \longrightarrow \R/2\pi\Z$, we have that
    $$\text{Ind}(f,\O_1,\O_2) = \frac{1}{2\pi}\bigl(\sspc\widetilde{s}\spc(1) - \widetilde{s}\spc(0)\sspc \bigr).$$
    This value is always an integer multiple of $1/2$, since $s(0)$ and $s(1)$ are always $0$ or $\pi$, and completely independent of the choice of $\alpha$ and $h$, as proven in \cite{LEROUX_2017}.

    Consider the path $A:[0,1]\longrightarrow \R^2\times \R^2\setminus \Delta$ given by
    $$ A(t) = \bigl(\sspc h\circ \alpha(t)\sspc, \sspc h\circ f\circ \alpha(t)\sspc\bigr).$$
    By denoting $F = h\circ f \circ h^{-1}$, we observe that the image of $A$ is contained in $W_F$, because
    $$ W_F = \{(h(z), h\circ f (z)) \mid z \in \R^2\}.$$
    By revisiting the relation between the map $\spc\dot\theta(\hspace*{0.08cm} \cdot \hspace*{0.08cm}, \hspace*{0.08cm}\cdot \hspace*{0.08cm}, \H )$ and the function $\sigma:\R/2\pi\Z \longrightarrow \ZZ$, defined in the proof of Lemma \ref{lemma:continuous}, we obtain the following commutative diagram.

\begin{center}
\hspace*{0.2cm}\begin{tikzpicture}[scale=0.8, node distance=2.2cm]
    \node (Wf) at (0,3) {$W_F$};
    \node (I) at (0,0) {$[0,1]$};
    \node (Z4) at (4.3,3) {$\ZZ$};
    \node (Z) at (4.3,0) {$\R/2\pi\Z$};
    
    \draw[->, thick] ([xshift=1pt]Wf.east) -- ([xshift=-1.5pt]Z4.west) node[midway, above=0.1cm] {$\spc\dot\theta(\hspace*{0.08cm} \cdot \hspace*{0.08cm}, \hspace*{0.08cm}\cdot \hspace*{0.08cm}, \H )$};
    \draw[->, thick] ([xshift=1pt]I.east) -- ([xshift=-1.5pt]Z.west) node[midway, below=0.1cm] {$s$};
    \draw[->, thick] ([yshift=1pt]Z.north) -- ([yshift=-1pt]Z4.south) node[midway, right=0.1cm] {$\sigma$};
    \draw[->, thick] ([yshift=1pt]I.north) -- ([yshift=-1pt]Wf.south) node[midway, left=0.1cm] {$A$};
    
    \node at (2.15,1.5) {\LARGE$\circlearrowright$};
\end{tikzpicture}
\end{center}

\mycomment{-0.3cm}
Consider the function $\widetilde{\sigma}:\R\longrightarrow \Z$ defined by the floor function $\widetilde{\sigma}(t) = \Big\lfloor\frac{2t}{\pi} \Big\rfloor$. Note that, the function $\widetilde{\sigma}$ satisfies $\widetilde{\sigma}(k\pi) = 2k$ for every $k\in \Z$ and, moreover, $\sspc\text{mod}_4\spc \circ \widetilde{\sigma} \spc=\spc \sigma \circ \sspc\text{mod}_{2\pi}\sspc$, as shown in the commutative diagram below.

\begin{center}
\hspace*{0.2cm}\begin{tikzpicture}[scale=0.8, node distance=2.2cm]
    \node (R) at (0,3) {$\Z$};
    \node (I) at (0,0) {$\R$};
    \node (Z4) at (4.3,3) {$\ZZ$};
    \node (Z) at (4.3,0) {$\R/2\pi\Z$};

    \draw[->, thick] ([xshift=1pt]R.east) -- ([xshift=-1.5pt]Z4.west) node[midway, above=0.1cm] {$\text{mod}_4$};
    \draw[->, thick] ([xshift=1pt]I.east) -- ([xshift=-1.5pt]Z.west) node[midway, below=0.1cm] {$\text{mod}_{2\pi}$};
    \draw[->, thick] ([yshift=1pt]Z.north) -- ([yshift=-1.5pt]Z4.south) node[midway, right=0.1cm] {$\sigma$};
    \draw[->, thick] ([yshift=1pt]I.north) -- ([yshift=-1.5pt]R.south) node[midway, left=0.1cm] {$\widetilde\sigma$};
    
    \node at (2.15,1.5) {\LARGE$\circlearrowright$};
\end{tikzpicture}
\end{center}

Combining both diagrams with those associated with the lifts $\Theta_F$ and $\widetilde{s}$, we can represent the setting through the following diagram.

\mycomment{-0cm}

\begin{center}
\hspace*{0.2cm}\begin{tikzpicture}[scale=0.84, node distance=2.2cm]
    \node (Wf) at (0,3.2) {$W_F$};
    \node (I) at (0,0) {$[0,1]$};
    \node (Z4) at (4.3,3.2) {$\ZZ$};
    \node (Z) at (4.3,0) {$\R/2\pi\Z$};
    \node (R) at (4.3,-3.2) {$\R$};
     \node (ZZZ) at (8,5.6) {$\Z$};
    
    \draw[->, thick]  ([xshift=-1.5pt]ZZZ.south west) --([xshift=1pt]Z4.north east) node[midway, above=0.15cm] {$\text{mod}_4 \quad \quad $};
    \draw[->, thick] ([xshift=1pt]Wf.east) -- ([xshift=-1.5pt]Z4.west) node[midway, above=0.1cm] {$\spc\dot\theta(\hspace*{0.08cm} \cdot \hspace*{0.08cm}, \hspace*{0.08cm}\cdot \hspace*{0.08cm}, \H )$};
    \draw[->, thick] ([xshift=1pt]I.east) -- ([xshift=-1.5pt]Z.west) node[midway, below=0.1cm] {$\ \ s$};
    \draw[->, thick] ([xshift=1pt]I.south east) -- ([xshift=-1.5pt]R.north west) node[midway, below left=0.05cm] {$\widetilde{s}$};
    \draw[->, thick] ([yshift=1pt]Z.north) -- ([yshift=-1pt]Z4.south) node[midway, right=0.1cm] {$\sigma$};
    \draw[->, thick] ([yshift=1pt]R.north) -- ([yshift=-1pt]Z.south) node[midway, right=0.1cm] {$\text{mod}_{2\pi}$};
    \draw[->, thick] ([yshift=1pt]I.north) -- ([yshift=-1pt]Wf.south) node[midway, left=0.1cm] {$A$};
    \draw[->, thick, bend right=35] ([xshift=2pt]R.east) to ([yshift=-2pt]ZZZ.south);
    \draw[->, thick, bend left=35] ([yshift=3pt]Wf.north) to ([xshift=-2pt]ZZZ.west);

    \node at (2.15,1.6) {\LARGE$\circlearrowright$};
    \node at (3.1,-1.2) {\LARGE$\circlearrowright$};
    \node at (3.7,4.7) {\LARGE$\circlearrowright$};
    \node at (6.3,1.5) {\LARGE$\circlearrowright$};
    \node at (3.2,6.4) {\large$\Theta_F$};
    \node at (8.3,0.4) {\large$\widetilde{\sigma}$};
\end{tikzpicture}
\end{center}

\mycomment{-0.4cm}
From the diagram above, we can deduce that any lift $\Theta_F$ satisfies the following equality
$$ \text{mod}_4\sspc \circ \Theta_F \circ A \spc =  \spc \text{mod}_4\sspc \circ \widetilde{\sigma} \circ \widetilde{s}\sspc. $$
This means that, there exists a function $\spc Q:[0,1]\longrightarrow 4\Z\spc$ such that
$  \Theta_F \circ A + Q=\widetilde{\sigma}\circ \widetilde{s}$. Since $\widetilde{\sigma} \circ \widetilde{s}$ and $\Theta_F \circ A$ are both continuous (with respect to the Khalimsky topology on $\Z$), the function $Q$ must be continuous as well. However, note that $4\Z \subset \Z$ is a discrete subset in the Khalimsky topology. This implies that $Q$ must be constant, and therefore, we can write
$$ \Theta_F \circ A + 4M  = \widetilde{\sigma}\circ \widetilde{s} $$
for some integer $M\in \Z$. Therefore, we can choose a suitable lift $\Theta_F$ that satisfies
$$ \Theta_F \circ A = \widetilde{\sigma}\circ \widetilde{s}.$$

At last, recall that $\widetilde{s}(0)$ and $\widetilde{s}(1)$ are both multiples of $\pi$, and therefore, we have that
\begin{equation*}
\widetilde{\sigma}\circ \widetilde{s}\sspc(0) = 2\spc\widetilde{s}\sspc(0)/\pi \quad \text{ and } \quad
\widetilde{\sigma}\circ \widetilde{s}\sspc(1) =2\spc \widetilde{s}\sspc(1)/\pi.
\end{equation*}
Let $x_1 \in \Z_1$ and $x_2 \in \Z_2$ be the points satisfying $A(0) = (x_1, F(x_1))$ and $A(1) = (x_2, F(x_2))$.
This means that
$ \Theta_F(x_1,F(x_1))= 2\spc \widetilde{s}\sspc(0)/\pi$ and $\Theta_F(x_2,F(x_2))= 2\spc \widetilde{s}\sspc(1)/\pi$, which implies
\mycomment{0.1cm}
$$ \text{Ind}(f,\O_1,\O_2) = \frac{1}{2\pi}\Bigl(\sspc\widetilde{s}\spc(1) - \widetilde{s}\spc(0)\sspc \Bigr) = \frac{1}{4}\Bigl(\sspc\Theta_F(x_2,F(x_2)) - \Theta_F(x_1,F(x_1))\sspc\Bigr).$$

\noindent Observe that the difference above is independent of the choice of lift $\Theta_F$, since they differ by an integer constant multiple of $4$.
This concludes the proof.
\end{proof}

\section{The index of a foliation between two transverse topological lines}\label{chap:foliation_index}

\mycomment{-0.4cm}

Let $\F$ be an oriented topological foliation of the plane. As shown in Lemma \ref{lem:parametrization}, the set
\mycomment{-0.4cm}
$$ W_\F = \Tp(\hspace*{0.08cm} \cdot \hspace*{0.08cm}, \hspace*{0.08cm}\cdot \hspace*{0.08cm}, \F )^{-1}(\sspc\dot 0\sspc)$$

\mycomment{-0.3cm}
\noindent is homeomorphic to $\R^2 \times (0,\infty)$, and therefore, it is simply connected. Consequently, the lifting theorem guarantees that $\spc\dot\theta(\hspace*{0.08cm} \cdot \hspace*{0.08cm}, \hspace*{0.08cm}\cdot \hspace*{0.08cm}, \mathcal H )\vert_{W_\F}$ admits a lift $\sspc\Theta_\F:W_\F\longrightarrow \Z\spc$, which is unique up to the addition of an integer multiple of $4$. This is represented in the diagram below.

\mycomment{-0.2cm}

\begin{center}
\hspace*{0.6cm}\begin{tikzpicture}[scale=0.75, node distance=2.2cm]
    \node (Wf) at (0,0) {$W_\F$};
    \node (Z4) at (4.3,0) {$\ZZ$};
    \node (Z) at (4.3,2.8) {$\mathbb{Z}$};
    
    \draw[->, thick] ([xshift=1pt]Wf.east) -- ([xshift=-1.5pt]Z4.west) node[midway, below=0.1cm] {$\spc\dot\theta(\hspace*{0.08cm} \cdot \hspace*{0.08cm}, \hspace*{0.08cm}\cdot \hspace*{0.08cm}, \H )$};
    \draw[->, thick] ([yshift=1pt]Wf.north east) -- ([yshift=-1pt]Z.south west) node[midway, above left=0.05cm] {$\Theta_\F$};
    \draw[->, thick] ([yshift=-2.5pt]Z.south) -- ([yshift=2.5pt]Z4.north) node[midway, right=0.1cm] {$\text{mod}_4$};
    
    \node at (2.85,0.9) {\LARGE$\circlearrowright$};
\end{tikzpicture}
\end{center}

\mycomment{-0.5cm}
Let $h:\R^2\longrightarrow \R^2$ be an orientation-preserving homeomorphism. We denote by $h\F$ the foliation obtained by applying $h$ to the leaves of $\F$, and by $\widehat{h\sspc}:\R^2\times \R^2\longrightarrow \R^2\times \R^2$ the homeomorphism $\widehat{h\sspc}(z,z') = (h(z), h(z'))$. First, observe that the following sets coincide
\mycomment{-0.1cm}
$$W_{h\F} = \{(h(z), h(z')) \mid z'\in \phi^+_z, z \neq z'\} = \widehat{h\sspc}(W_\F).$$

\mycomment{-0.2cm}
\noindent Moreover, note that any lift $\Theta_{h\F}$ respects the following commutative diagram.

\mycomment{-0.1cm}

\begin{center}
\hspace*{0.6cm}\begin{tikzpicture}[scale=0.85
    , node distance=2.2cm]
    \node (Wf) at (0,3) {$W_{h\F}$};
    \node (I) at (0,0) {$W_\F$};
    \node (Z4) at (4.7,3) {$\Z$};
    \node (Z) at (4.7,0) {$\ZZ$};
    
    \draw[->, thick] ([xshift=1pt]Wf.south east) -- ([xshift=-1.5pt]Z.north west) node[midway, sloped, above=0.1cm] {$\ \spc\dot\theta(\hspace*{0.08cm} \cdot \hspace*{0.08cm}, \hspace*{0.08cm}\cdot \hspace*{0.08cm}, \H )$};
    \draw[->, thick] ([xshift=1pt]I.east) -- ([xshift=-1.5pt]Z.west) node[midway, below=0.1cm] {$\spc\dot\theta(h(\hspace*{0.08cm} \cdot \hspace*{0.08cm}), \sspc h(\hspace*{0.08cm}\cdot \hspace*{0.08cm}), \H \sspc)$};
    \draw[->, thick] ([xshift=1pt]Wf.east) -- ([xshift=-1.5pt]Z4.west) node[midway, above=0.1cm] {$\Theta_{h\F}$};
    \draw[->, thick] ([yshift=-1pt]Z4.south) --([yshift=1pt]Z.north) node[midway, right=0.1cm] {$\text{mod}_4$};
    \draw[->, thick] ([yshift=1pt]I.north) -- ([yshift=-1pt]Wf.south) node[midway, left=0.1cm] {$\widehat{h}$};
    
    \node at (1.4,1) {\LARGE$\circlearrowright$};
    \node at (3.7,2.2) {\LARGE$\circlearrowright$};
\end{tikzpicture}
\end{center}

\mycomment{-0.45cm}
\noindent The diagram above shows that the map $\Theta_{h\F} \circ \widehat{h}$ is a lift of the function $\spc\dot\theta(h(\hspace*{0.08cm} \cdot \hspace*{0.08cm}), h(\hspace*{0.08cm}\cdot \hspace*{0.08cm}), \H )\vert_{W_\F}$.

Let $\Gamma_1$ and $\Gamma_2$ be two disjoint oriented topological lines on $\R^2$ that are positively transverse to the foliation $\F$. We associate to the ordered pair $(\Gamma_1,\sspc\Gamma_2)$ the following set
$$ \str(\Gamma_1,\Gamma_2) := \{\sspc h \in \homeo^+(\R^2) \mid h(\Gamma_1) = \R_1\sspc , \spc  h(\Gamma_2) = \R_2\sspc\},$$
where $\R_i$ denotes the horizontal line $\R\times\{i\}$, for $i \in \{1,2\}$. Note that, the set $\str(\Gamma_1,\Gamma_2)$ is non-empty (by the Homma-Schoenflies theorem), and it is path-connected, in the sense that, for any $h, h' \in \str(\Gamma_1,\Gamma_2)$, there exists an isotopy $(h_t)_{t\in [0,1]}$ such that $h_0 = h\sspc$, $\sspc h_1 = h'$ and $$h_t \in \str(\Gamma_1,\Gamma_2)\sspc, \quad \forall t\in [0,1].$$


\begin{proposition}\label{prop:index_invariance}
    For any $h \in \str(\Gamma_1,\Gamma_2)$, any lift $\Theta_{h\F}$, and any two points $x_i,x_i'\in \R^2$ satisfying $x_i \in \Gamma_i$ and $x_i' \in \phi^+_{x_i}\setminus \{x_i\}$, for all $i \in \{1,2\}$, the value
    $$ \Theta_{h\F}(\sspc h(x_2),\spc h(x_2')\sspc ) - \Theta_{h\F}(\sspc h(x_1),\spc h(x_1')\sspc )$$  
    is independent of all such choices.
\end{proposition}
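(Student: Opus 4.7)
The plan is to verify invariance in three independent substeps: first the choice of lift, then the points $x_i, x_i'$ for fixed $h$, and finally the homeomorphism $h$ itself.

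Independence of the lift $\Theta_{h\F}$ is immediate: two lifts differ by a constant in $4\Z$, so the difference in the statement is unchanged. For independence from $x_i, x_i'$, I would rely on the following geometric observation. By positive transversality, the leaf $\phi_{x_i}$ of $\F$ crosses $\Gamma_i$ at most once; since it passes through $x_i \in \Gamma_i$, it crosses there exactly, so the connected set $\phi^+_{x_i}\setminus\{x_i\}$ is disjoint from $\Gamma_i$ and hence lies on one of its two sides. Applying the homeomorphism $h$, the set $h(\phi^+_{x_i}\setminus\{x_i\})$ lies on one side of $\R_i$, and in particular $h(x_i')$ is strictly above or below the horizontal line $\R_i$. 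Therefore $\Tp(h(x_i), h(x_i'), \H) \in \{\dot 1, -\dot 1\}$, and $\Theta_{h\F}(h(x_i), h(x_i'))$ is an odd integer. Since each odd singleton $\{2k+1\}$ is open in the Khalimsky topology on $\Z$, the set of odd integers is a discrete subspace. Using the Whitney parametrization $\varphi_\F$, I would show that the set $\{(h(x), h(x')) : x\in\Gamma_i,\ x'\in\phi^+_x\setminus\{x\}\}$ is path-connected in $W_{h\F}$, so the restriction of $\Theta_{h\F}$ to this set is a continuous map into a discrete subspace, hence constant.

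For independence of $h$, I would exploit the path-connectedness of $\str(\Gamma_1,\Gamma_2)$ asserted just before the proposition. Given an isotopy $(h_t)_{t\in[0,1]}$ in $\str(\Gamma_1,\Gamma_2)$ connecting any two elements, I would form the total space
$$E = \bigl\{(t, h_t(z), h_t(z')) : t \in [0,1],\ (z,z')\in W_\F\bigr\},$$
which is homeomorphic to $[0,1]\times W_\F$ via $(t,z,z')\mapsto(t,h_t(z),h_t(z'))$ and is therefore simply connected. The continuous map $(t,y,y')\mapsto \Tp(y,y',\H)$ from $E$ to $\ZZ$ admits, by the lifting theorem, a global lift $\widetilde\Theta : E \to \Z$ whose restriction to each slice $\{t\}\times W_{h_t\F}$ is a valid choice of $\Theta_{h_t\F}$. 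The continuous paths $\alpha_i(t) = (t, h_t(x_i), h_t(x_i'))$ are mapped by $\widetilde\Theta$ into the odd integers (by the transversality argument applied to each $h_t$), hence into a discrete subspace, so each $\widetilde\Theta\circ\alpha_i$ is constant in $t$. Consequently the difference $\Theta_{h_t\F}(h_t(x_2), h_t(x_2')) - \Theta_{h_t\F}(h_t(x_1), h_t(x_1'))$ is constant in $t$, establishing the desired independence from $h$.

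I expect the main obstacle to be the positive-transversality step: recognizing that $h(x_i')$ must lie strictly off $\R_i$ is the geometric input that forces the lifted angles into the discrete subspace of odd integers, and only then do the continuity-plus-connectedness arguments collapse to constancy. Once that discreteness is in place, the remaining work is essentially formal, relying on the simple connectedness of $W_\F$ and of the straightened total space $E$.
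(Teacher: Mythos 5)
Your proposal is correct and takes essentially the same approach as the paper: both decompose into the three invariances (lift, points, $h$), establish that positive transversality forces $\Tp(h(x_i),h(x_i'),\H)\in\{\dot 1,-\dot 1\}$ so the lifted values are odd integers, and then use continuity into the discrete subspace of odd integers on a connected parameter space (a slice of $[0,1]\times W_\F$ for the isotopy, and a connected subset of $W_\F$ for the points) to conclude constancy. The only difference is presentational: the paper proves the $(x_i,x_i')$-independence by constructing an explicit path $(\alpha,\alpha')$ in a trivializing neighborhood, whereas you invoke connectedness of $\{(x,x'):x\in\Gamma_i,\ x'\in\phi^+_x\setminus\{x\}\}\cong\Gamma_i\times(0,\infty)$ via the Whitney parametrization, which is a marginally cleaner packaging of the same fact.
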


\mycomment{-0.25cm}
\begin{proof}
    The independence of choice of lift $\Theta_{h\F}$ is clear, as it follows from the fact that any two lifts differ by an integer multiple of $4$. Thus, we begin by proving the independence of choice of map $h \in \str(\Gamma_1,\Gamma_2)$.
    Let $h_0, h_1 \in \str(\Gamma_1,\Gamma_2)$, and let $(h_t)_{t\in [0,1]}$ be an isotopy between the maps $h_0$ and $h_1$ that satisfies $h_t \in \str(\Gamma_1,\Gamma_2)$ for all $t\in [0,1]$. This isotopy induces a continuous map $\spc H: W_\F\times [0,1] \longrightarrow \ZZ\spc $ defined by the expression
    $$ H(t,z,z') = \Tp(\sspc h_t(z),\spc h_t(z')\sspc , \H).$$
    Since $W_\F\times [0,1]$ is simply-connected, the lifting theorem guarantees that the function $H$ admits a lift $\widetilde{H}:W_\F\times [0,1]\longrightarrow \Z$, as shown in the following commutative diagram.
    \begin{center}
\hspace*{0.6cm}\begin{tikzpicture}[scale=0.75, node distance=2.2cm]
    \node (Wf) at (0,0) {$W_\F\times[0,1]$};
    \node (Z4) at (4.3,0) {$\ZZ$};
    \node (Z) at (4.3,2.8) {$\mathbb{Z}$};
    
    \draw[->, thick] ([xshift=1pt]Wf.east) -- ([xshift=-1.5pt]Z4.west) node[midway, below=0.1cm] {$H$};
    \draw[->, thick] ([yshift=1pt]Wf.north east) -- ([yshift=-1pt]Z.south west) node[midway, above left=0.05cm] {$\widetilde{H}$};
    \draw[->, thick] ([yshift=-2.5pt]Z.south) -- ([yshift=2.5pt]Z4.north) node[midway, right=0.1cm] {$\text{mod}_4$};
    
    \node at (2.85,0.9) {\LARGE$\circlearrowright$};
\end{tikzpicture}
\end{center}

\mycomment{-0.4cm}
\noindent Observe that, for any $t\in [0,1]$, the map $\widetilde{H}(\hspace*{0.08cm} \cdot \hspace*{0.08cm}, t)$ is a lift of the function $\spc\dot\theta(h_t(\hspace*{0.08cm} \cdot \hspace*{0.08cm}), h_t(\hspace*{0.08cm}\cdot \hspace*{0.08cm}), \H )\vert_{W_\F}$. In other words, there exists a suitable choice of lifts $\{\Theta_{h_t \F}\}_{t\in [0,1]}$ satisfying
$$ \widetilde{H}(\hspace*{0.08cm} \cdot \hspace*{0.08cm},\hspace*{0.08cm} \cdot \hspace*{0.08cm}, t) = \Theta_{h_t \F}\circ \widehat{h_t}\sspc, \quad \forall t\in [0,1].$$
In particular, for this choice of lifts, we have that for each $i \in \{1,2\}$ the value
$$ \Theta_{h_t \F}(\sspc h_t(x_i),\spc h_t(x_i')\sspc ) \in \Z$$
varies continuously with respect to $t\in [0,1]$.

Now, observe that, since $h_t \in \str(\Gamma_1,\Gamma_2)$, and since the lines $\Gamma_1$ and $\Gamma_2$ are positively transverse to the foliation $\F$, for any $i \in \{1,2\}$ and $t\in [0,1]$, we have that
\mycomment{-0.07cm}
$$ \Tp(\sspc h_t(x_i),\spc h_t(x_i')\sspc , \H) \in \{\sspc -\dot{1
}\sspc, \spc \dot{1}\sspc\}.$$

\mycomment{-0.2cm}
\noindent
Consequently, for any $i \in \{1,2\}$ and $t\in [0,1]$, the lift $\Theta_{h_t \F}$ satisfies
\mycomment{-0.07cm}
$$ \Theta_{h_t \F}(\sspc h_t(x_i),\spc h_t(x_i')\sspc ) \in \{2k+1 \mid k\in \Z\}.$$

\mycomment{-0.2cm}
\noindent
At last, since $\{2k+1 \mid k\in \Z\}$ is a discrete subset of $\Z$ with the Khalimsky topology, and since the value $\Theta_{h_t \F}(\sspc h_t(x_i),\spc h_t(x_i')\sspc )$ varies continuously with $t\in [0,1]$, we conclude that 
\mycomment{-0.07cm}
$$\Theta_{h_1 \F}(\sspc h_1(x_i),\spc h_1(x_i')\sspc ) = \Theta_{h_0 \F}(\sspc h_0(x_i),\spc h_0(x_i')\sspc )\sspc, \quad \forall i \in \{1,2\}.$$

\mycomment{-0.2cm}
\noindent
This proves the independence of choice of map $h \in \str(\Gamma_1,\Gamma_2)$.

Finally, we prove the independence of choice of points $x_1$ and $x_1'$. The case of $x_2$ and $x_2'$ is completely analogous. Consider points  $x_1, y_1 \in \Gamma_1$, and let $x_1'\in \phi^+_{x_1}\setminus \{x_1\}$ and $y_1' \in \phi^+_{y_1}\setminus \{y_1\}$. Let $\alpha:[0,1]\longrightarrow \R^2$ be an injective sub-path of $\Gamma_1$ connecting $x_1$ and $y_1$. Note that $\alpha$ admits a neighborhood $V\subset \R^2$ that trivializes the foliation $\F$ and contains the points $x_1'$ and $y_1'$. This trivializing neighborhood allows us to construct a path $\alpha':[0,1]\longrightarrow V$ that is disjoint from the line $\Gamma_1$, connects the point $x_1'$ to $y_1'$, and satisfies $\alpha'(t) \in \phi^+_{\alpha(t)}$ for every $t\in [0,1]$. 
Since $\alpha'$ is disjoint from $\Gamma_1$, we know that the path $A:t\longmapsto (\alpha(t), \alpha'(t))$ is contained in $W_\F$. Moreover, since $\alpha$ is contained in $\Gamma_1$ and $h(\Gamma_1) = \R_1$, we know that the value
\mycomment{-0.07cm}
$$\Tp(h(\alpha(t)), h(\alpha'(t)), \H)$$

\mycomment{-0.2cm}
\noindent
is constant and equal to $-\dot{1}$ or $\dot{1}$ along all $t\in [0,1]$. Again, since $\{2k+1 \mid k\in \Z\}$ is a discrete subset of $\Z$ with the Khalimsky topology, and since $\Theta_{h\F}(\sspc h(\alpha(t)),\spc h(\alpha'(t))\sspc )$ varies continuously with $t\sspc$, we conclude that 
$\Theta_{h\F}(\sspc h(\alpha(t)),\spc h(\alpha'(t))\sspc )$
is  constant along $t\in [0,1]$. By evaluating this value at $t=0$ and $t=1$, we conclude that 
\mycomment{-0.07cm}
$$ \Theta_{h\F}(\sspc h(x_1),\spc h(x_1')\sspc ) =  \Theta_{h\F}(\sspc h(y_1),\spc h(y_1')\sspc ).$$

\mycomment{-0.2cm}
\noindent
This proves the independence of choice of points $x_1$ and $x_1'$, and concludes the proof.
\end{proof}

\mycomment{0.1cm}
With Proposition \ref{prop:index_invariance} in hand, we can finally define the index $\textup{Ind}(\F,\Gamma_1,\Gamma_2)$.

\begin{definition}
    The index of the foliation $\F$ between $\Gamma_1$ and $\Gamma_2$ is defined as
    $$\textup{Ind}(\F,\Gamma_1,\Gamma_2) := \frac{1}{4}\Bigl(\sspc\Theta_{h\F}(h(x_2),\spc h(x_2')\sspc ) - \Theta_{h\F}(h(x_1),\spc h(x_1')\sspc )\Bigr),$$
for any $h \in \str(\Gamma_1,\Gamma_2)$ and $x_i,x_i'\in \R^2$ such that $x_i \in \Gamma_i$ and $x_i' \in \phi^+_{x_i}\setminus \{x_i\}$, for all $i \in \{1,2\}$.
\end{definition}

To consider all cases, when $\Gamma_1$ and $\Gamma_2$ are not disjoint, we define $\textup{Ind}(\F,\Gamma_1,\Gamma_2):=0$.

\subsection{Accessible points in the configuration space}
\label{sec:accessible}

We begin the section by defining and explaining the concept of accessible points.

A point $z' \in \R^2$ is said to be \emph{accessible} (with respect to the foliation $\F$) from $z\in \R^2$ if either $z' \in \phi_z$ or there exists a path $\gamma:[0,1]\longrightarrow \R^2$ positively transverse to the foliation $\F$ with endpoints $\gamma(0) = z$ and $\gamma(1) = z'$.
For each point $z\in \R^2$, we can define the set 
$$A_z(\F) = A_z := \{z' \in \R^2 \mid z' \text{ is accessible from } z\}.$$
The set $A_z$ is a saturated by leaves of $\F$ and homeomorphic to the closed half-plane $[0,\infty)\times\R$, with the leaf $\phi_z$ being mapped to the vertical line $\{0\}\times\R$ oriented downwards. Moreover, the boundary $\partial A_z$ forms a locally-finite collection of leaves, with $A_z$ lying on the left side of each leaf in $\partial A_z$. It is worth noting that the set $A_z$ is not necessarily trivially foliated by $\F$. In fact, from the point of view of the leaf space of $\F$, the set $\pi_\F(A_z)$ corresponds in general to a non-Hausdorff simply connected 1-manifold oriented outwards from the leaf $\phi_z$.

\mycomment{0.2cm}

\begin{figure}[h!]
    \center\begin{overpic}[width=12cm, height=3.7cm, tics=10]{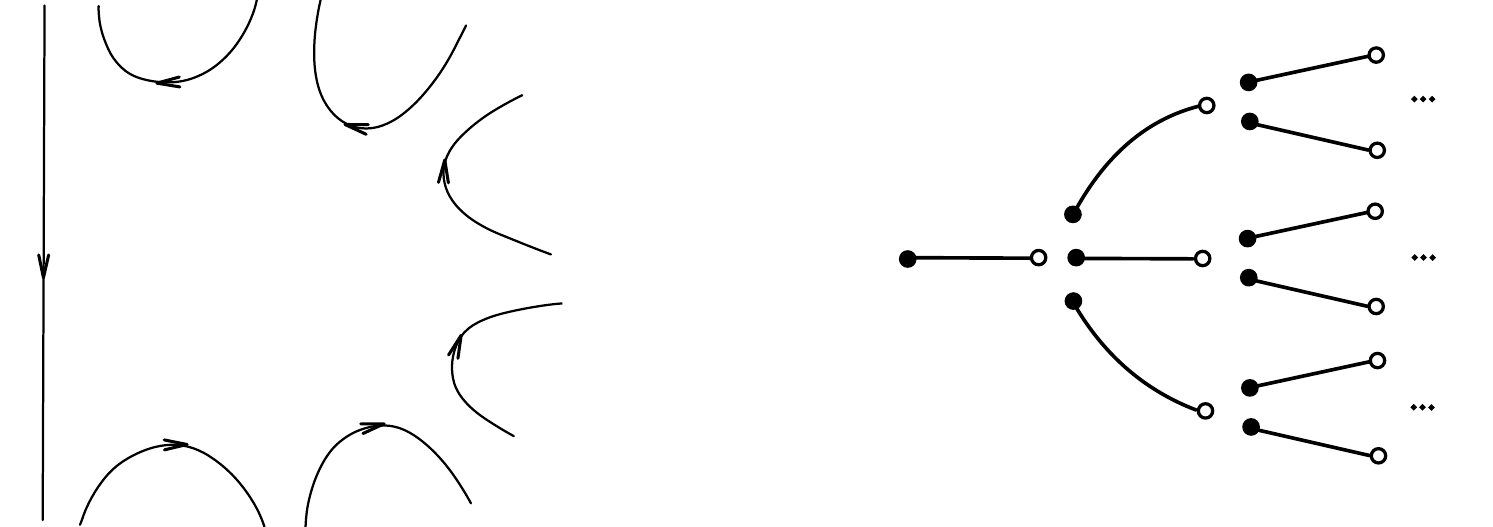}
         \put (15,14.5) {{\color{black}\large$\displaystyle A_z$}}
         \put (-3,14.5) {{\color{black}\large$\displaystyle \phi_z$}}
         \put (55,19) {{\color{black}\large$\displaystyle \pi_\F(\phi_z)$}}
         \put (74,-3) {{\color{black}\large$\displaystyle \pi_\F(A_z)$}}
\end{overpic}
\end{figure}

\mycomment{0.3cm}

Consider the subset $W_{A(\F)}$ of the configuration space $\R^2\times\R^2\setminus\Delta$ which is defined as
$$ W_{A(\F)} := \{(z,z') \in \R^2\times\R^2 \mid z' \in A_z(\F)\sspc, \spc z \neq z'\}.$$
Let $p:W_{A(\F)}\longrightarrow \R^2$ be the map given by projection on the first coordinate $p(z,z') = z$. 

\mycomment{0.3cm}

The rest of the section is dedicated to proving the following technical lemma, which will be crucial for the proof of Theorem \ref{thmx:III-A}.

\begin{lemma}
\label{lem:fibration}
    The map $p:W_{A(\F)}\longrightarrow \R^2$ is a Serre fibration.
\end{lemma}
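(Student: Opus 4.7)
The plan is to apply Theorem \ref{thm:local_serre_fibration}: it suffices to show that every point $z_0 \in \R^2$ admits an open neighborhood $V$ such that the restriction $p|_{p^{-1}(V)}: p^{-1}(V) \to V$ is a Serre fibration. In fact, I will establish the stronger statement that this restriction is a trivial fiber bundle with fiber $A_{z_0} \setminus \{z_0\}$, which immediately makes it a Serre fibration.

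First, I would fix a foliated chart $(U, h)$ around $z_0$ trivializing $\F$, and a smaller open neighborhood $V$ compactly contained in $U$. For each $z \in V$, I would construct an orientation-preserving homeomorphism $\Psi^z: \R^2 \to \R^2$ that is the identity outside $U$, maps $\F$ to $\F$ while preserving the orientation of its leaves, sends $z_0$ to $z$, and depends continuously on $z$ in the sense that $(z, w) \mapsto \Psi^z(w)$ is continuous on $V \times \R^2$. Inside the chart, where $\F$ looks like the horizontal foliation, such a $\Psi^z$ can be written explicitly in the form $(x, y) \mapsto (\alpha(x, y, z), \beta(y, z))$ with $\beta(\cdot, z)$ and each $\alpha(\cdot, y, z)$ increasing homeomorphisms of the chart's coordinate intervals, equal to the identity near the chart boundary, and arranged so that $\Psi^z(z_0) = z$; a standard bump-function or piecewise-linear construction provides such a family continuously in $z$.

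Because $\Psi^z$ preserves $\F$ together with the orientation of its leaves, it sends positively transverse paths to positively transverse paths. Combined with $\Psi^z(z_0) = z$, and applying the analogous statement to $(\Psi^z)^{-1}$, this yields $\Psi^z(A_{z_0}) = A_z$; in particular $\Psi^z$ restricts to a homeomorphism from $A_{z_0} \setminus \{z_0\}$ onto $A_z \setminus \{z\}$. Consequently, the map
$$\Phi: V \times (A_{z_0} \setminus \{z_0\}) \longrightarrow p^{-1}(V), \quad (z, w) \longmapsto (z, \Psi^z(w))$$
is a continuous bijection fibered over $V$, with continuous inverse $(z, w') \mapsto (z, (\Psi^z)^{-1}(w'))$ once we observe that $z \mapsto \Psi^z$ is continuous in the compact-open topology of $\homeo^+(\R^2)$, so that inversion of homeomorphisms is continuous as well. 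Hence $\Phi$ is a homeomorphism, $p|_{p^{-1}(V)}$ is a trivial fiber bundle, and Theorem \ref{thm:local_serre_fibration} concludes that $p$ is a Serre fibration.

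The main obstacle is the continuous-in-$z$ construction of the homeomorphisms $\Psi^z$: they must simultaneously permute the leaves of $\F$ in an orientation-preserving way, realize the prescribed displacement $z_0 \mapsto z$, and be supported in $U$, all depending continuously on $z$. This is not a deep difficulty but requires explicit bookkeeping inside the trivializing chart. Everything else (preservation of accessibility, bi-continuity of $\Phi$, and the global assembly via the local-to-global criterion) then follows by soft arguments once this construction is in place.
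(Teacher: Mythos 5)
Your reduction to the local problem via Theorem~\ref{thm:local_serre_fibration} matches the paper's opening move, but the proposed local trivialization cannot exist, and the difficulty you dismiss as ``bookkeeping'' is in fact a genuine obstruction. You ask for homeomorphisms $\Psi^z$ of $\R^2$ that preserve $\F$, are the identity outside a bounded foliated chart $U$, and satisfy $\Psi^z(z_0)=z$. Any such $\Psi^z$ must fix \emph{every} leaf of $\F$ setwise: leaves of a planar foliation are properly embedded lines, so each leaf $\phi$ contains a point $q\in\phi\setminus U$; then $\Psi^z(q)=q$, and since $\Psi^z$ carries leaves to leaves, $\Psi^z(\phi)$ is the leaf through $q$, namely $\phi$ itself. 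Hence $\Psi^z(z_0)\in\phi_{z_0}$, so the requirement $\Psi^z(z_0)=z$ fails for every $z\notin\phi_{z_0}$, i.e.\ for almost all $z$ in the proposed $V$. One can also see this directly in the chart: continuity of the extension by the identity across the ends of the plaques (say as $x\to 0$) forces $\beta(y,z)\to y$ for every $y$, and since $\beta$ does not depend on $x$, this gives $\beta(\cdot,z)=\mathrm{id}$, killing the transverse displacement you need.

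Beyond the failure of this particular construction, the stronger claim that $p|_{p^{-1}(V)}$ is a locally trivial fiber bundle is very likely false in general. When $\F$ has a non-Hausdorff leaf space (a Reeb-type branching), the accessible set $A_z$ jumps discontinuously in the Hausdorff sense as $z$ crosses a non-separated leaf, so the fibers $A_z\setminus\{z\}$ cannot be matched up by an ambient homeomorphism depending continuously on $z$. This is exactly the phenomenon the paper's proof is organized around: it notes that as the homotopy $H_t$ progresses, points of $H_0'(I^n)$ lying in $A\setminus C$ ``may become inaccessible'' and must first be retracted into the trivially foliated band $C$ before the naive isotopy argument (your simplified case, which the paper also treats) can be applied. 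The three-step homotopy-lifting construction is the substitute for the local triviality that is simply not available here, which is why the lemma asserts only a Serre fibration and not a bundle.
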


\begin{proof}
    Let $\mathcal V$ be a countable cover of $\R^2$ consisting of open sets $V\in \mathcal V$ trivializing $\F$. More precisely, for each $V\in \mathcal V$, there exists a homeomorphism $\varphi: \overline{\rule{0cm}{0.3cm} V} \longrightarrow [0,1]^2$ such that, for each leaf $\phi\in \F$, the set $\varphi(\phi\cap V)$ is a vertical segment $\{s\}\times [0,1]$ oriented downwards.
    
    According to Theorem \ref{thm:local_serre_fibration}, to prove that $p:W_{A(\F)}\longrightarrow \R^2$ is a Serre fibration, it suffices to show that for each $V\in \mathcal V$, the restriction $p\vert_{p^{-1}(V)}:p^{-1}(V)\longrightarrow V$ is a Serre fibration.

    Therefore, we fix $V\in \mathcal V$, and a corresponding map  $\varphi:\overline{\rule{0cm}{0.3cm} V}\longrightarrow [0,1]^2$ as described above. 
    Let $\phi_R \in \F$ be the right-most leaf intersecting the set $\overline{\rule{0cm}{0.3cm} V}$, in other words $\phi_R := \phi_{\varphi^{-1}(0,0)}$, and let $\phi_L \in \F$ be the left-most leaf intersecting $\overline{\rule{0cm}{0.3cm} V}$, that is  $\phi_L = \phi_{\varphi^{-1}(1,0)}$. Observe that
    \mycomment{-0.1cm}
$$ A_{\varphi^{-1}(s',0)} \subset A_{\varphi^{-1}(s,0)}, \quad \forall 0\leq s < s' \leq 1.$$

    \mycomment{-0.2cm}
    \noindent This motivates us to denote $A:= A_{\varphi^{-1}(0,0)}$, as it satisfies $A_z \subset A$ for every point $z\in \overline{\rule{0cm}{0.3cm} V}$.

    Let $\Gamma:[0, \infty)\longrightarrow \R^2$ be a properly-embedded path, positively transverse to $\F$, that is contained in $A$ and satisfies $\Gamma(s) = \varphi^{-1}(\sspc s\sspc,\sspc \frac{1}{2}\sspc)$ for all $s\in [0,1]$. Define as well the set
    \mycomment{-0.1cm}
    $$ C:=\{ z \in \R^2 \mid \phi_z \cap \Gamma \neq \varnothing\}.$$

    \mycomment{-0.2cm}
    \noindent Note that $C$ is a subset of $A$ also saturated by leaves of $\F$ and homeomorphic to $[0,\infty)\times\R$, however, differently from $A$, the set $C$ is trivially foliated by $\F$. This means that, from the point of view of the leaf space of $\F$, the set $C$ corresponds to an oriented 1-manifold homeomorphic to the positively oriented interval $[0,\infty)$, with $\phi_R \in C$ being the point $0$.

    \mycomment{0.2cm}

\begin{figure}[h!]
    \center\begin{overpic}[width=12.5cm, height=4.1cm, tics=10]{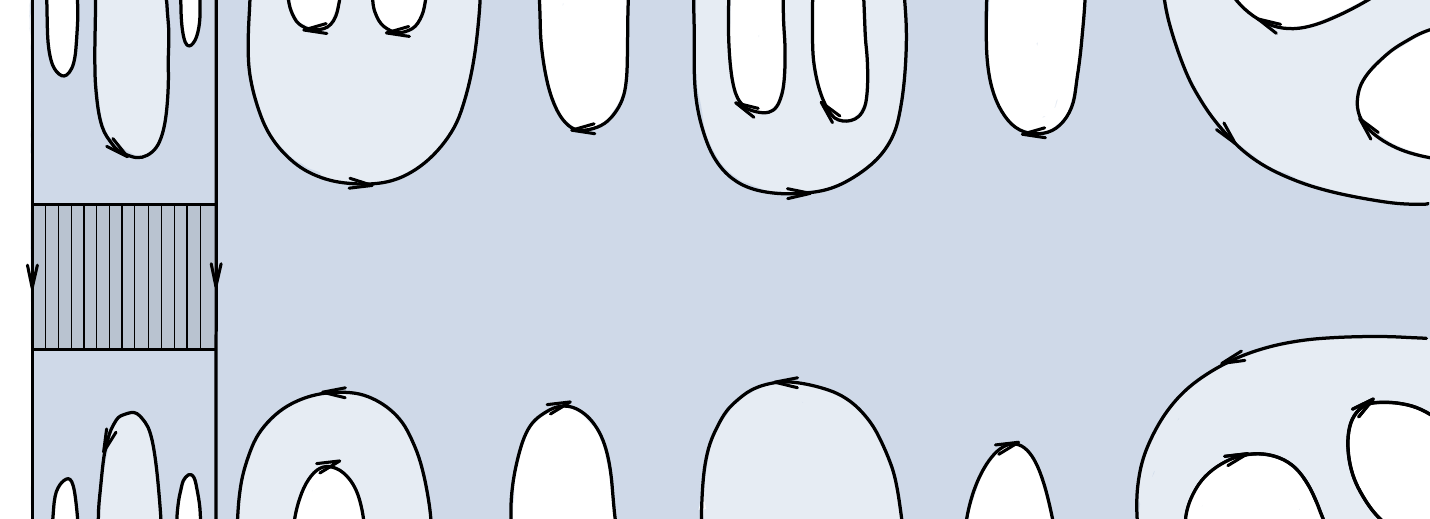}
         \put (55,13.5) {{\color{black}\large$\displaystyle C \subset A$}}
         \put (23,24.5) {{\color{black}\large$\displaystyle A$}}
         \put (3.2,21) {{\color{black}\large$\displaystyle \overline{\rule{0cm}{0.3cm} V}$}}
         \put (-3,14.5) {{\color{black}\large$\displaystyle \phi_R$}}
         \put (16.5,14.5) {{\color{black}\large$\displaystyle \phi_L$}}
\end{overpic}
\end{figure}

\mycomment{0cm}

    \noindent Now that we have defined the sets $A$ and $C$, we can finally begin the proof of the lemma. 

    \mycomment{0.2cm}

    Consider the $n$-dimensional cube $I^n=[0,1]^n$ and the following given objects:
    \begin{itemize}
        \item Let $H:[0,1]\times I^n \longrightarrow V$ be a homotopy, and denote $H_t := H\vert_{\{t\}\times I^n}$ for each $t\in [0,1]$.
        \item Let $\widetilde{H}_0:I^n \longrightarrow p^{-1} (V)$ be a lift of the map $H_0$ through the map $p\vert_{p^{-1}(V)}$.
    \end{itemize}
    \mycomment{0.2cm}
    Note that this implies that, for each $x\in I^n$, we can write
    \mycomment{-0.1cm}
        $$ \widetilde {H}_0(x) = (\sspc H_0(x)\sspc ,\sspc  H'_0(x)\sspc), \quad \forall x\in I^n,$$

    \mycomment{-0.2cm}
    \noindent where $H'_0:I^n\longrightarrow A$ is a continuous map satisfying $H'_0(x) \in A_{H_0(x)}\setminus \{H_0(x)\}$, for any $x\in I^n$.
    
    In order to prove that $p\vert_{p^{-1}(V)}:p^{-1}(V)\longrightarrow V$ is a Serre fibration, we need to construct a continuous family of continuous maps $H_t':I^n\longrightarrow A$ satisfying the accessibility condition
    $$ H'_t(x) \in A_{H_t(x)}\setminus \{H_t(x)\}, \quad \forall x\in I^n, \quad \forall t\in [0,1].$$
    This is because, the homotopy $\widetilde{H}:I^n\times [0,1]\longrightarrow p^{-1}(V)$ defined by
    $ \widetilde{H}(x,t) = (\sspc H_t(x)\sspc ,\sspc H'_t(x)\sspc)$
    will constitute a lift of the homotopy $H:I^n\times [0,1]\longrightarrow V$ satisfying $\widetilde{H}\vert_{\{0\}\times I^n} = \widetilde{H}_0$.

    \mycomment{-1cm}

    Before constructing the continuous family of maps $\{H_t'\}_{t\in [0,1]}$ in the general case, we first explore a simplified case under the assumption that $H_0'(I^n)$ is contained in $C$. Observe that, in this case, since $H_0'(I^n)$ is a compact subset of $C$, and since $C$ is trivially foliated by $\F$, we can consider another open set $U\subset \R^2$ trivializing the foliation $\F$ that satisfies
    \mycomment{-0.1cm}
    $$ U \subset C \quad \text{ and } \quad \overline{\rule{0cm}{0.3cm} V} \cup H_0'(I^n) \subset U.$$

    \mycomment{-0.25cm}
    \noindent Now, consider an isotopy $(g_t)_{t\in [0,1]}$ on $\R^2$, supported on $\overline{\rule{0cm}{0.3cm} U}$, that starts at the identity and moves the points positively transverse to $\F$, until the set $g_1(H_0'(I^n))$ is contained in $L(\phi_L) \cap U$. The speed at which $(g_t)_{t\in [0,1]}$ moves the points depends on the given isotopy $(H_t)_{t\in [0,1]}$, but by the compactness of $H_0'(I^n)$, we can ensure that the isotopy $(g_t)_{t\in [0,1]}$ can be taken to move the points sufficiently fast so that the accessibility condition 
    \mycomment{-0.12cm}
    $$ g_t(H_0'(x)) \in A_{H_t(x)}\setminus \{H_t(x)\} $$

    \mycomment{-0.2cm}
    \noindent remains satisfied for all $x\in I^n$ and $t\in [0,1]$. We remark that the condition $g_t(H_0'(x)) \neq H_t(x)$ is ensured by the fact that $(g_t)_{t\in [0,1]}$ moves points positively transverse to $\F$. In conclusion, in this simplified case, we attain the desired family $\{H_t'\}_{t\in [0,1]}$ by defining $H_t' := g_t\circ H_0'$.

    In the general case, it is possible that the set $H_0'(I^n)$ intersects regions of $A\setminus C$. These regions may become inaccessible as the homotopy $H_t$ progresses. To address this, we need to construct a continuous family of maps $\{H_t'\}_{t\in [0,1]}$ that contracts $H_0'(I^n)$ into the set $C$, before the regions of $A\setminus C$ that it intersects become inaccessible. However, we cannot simply retract the set $H_0'(I^n)$ from the regions of $A\setminus C$ without paying attention to the accessibility condition of the points already within $C$. This is because, there may exist points $x \in I^n$ such that $H_0(x)$ and $H_0'(x)$ lie on the same leaf of $\F$, and therefore, if $H_t(x)$ moves immediately to the left of the leaf $\phi_{H_t(x)}$, then the point $H_t'(x)$ needs to immediately follow the point $H_t(x)$ leftwards to maintain the accessibility condition. Once these details are taken into account, and the set $H_t'(I^n)$ is contained in $C$, we fall into the simplified case described above.

    Therefore, our method to construct the family $\{H_t'\}_{t\in [0,1]}$ follows three steps:
    \begin{itemize}[leftmargin=1.3cm]
        \item[(1)] To show that there exists a timeframe $t\in[0,T]$ for which the retraction of the set $H_0'(I^n)$ into $C$ can be performed while maintaining the accessibility condition.
        \item[(2)] To identify points $x\in I^n$ such that $H'_0(x)$ lose accessibility from $H_t(x)$ for $t\in [0,T]$, and to initially construct $\{H_t'\}_{t\in [0,T/2]}$ to ensure the accessibility from $H_t(x)$ to $H'_t(x)$.
        \item[(3)] To construct the family $\{H_t'\}_{t\in [T/2,T]}$ that retracts the set $H'_t(I^n)$ into $C$.
    \end{itemize}
    \mycomment{0.22cm}
    Once we have constructed the family $\{H_t'\}_{t\in [0,T]}$, the set $H'_T(I^n)$ will be contained in $C$, which means to construct the remaining family $\{H_t'\}_{t\in [T,1]}$ as in the simplified case.

    \vspace*{0.3cm}

    \textbf{Step 1 - Finding a timeframe for the retraction.}

    Let $\{B_i\}_{i\in J}$ be the collection of connected components of $A\setminus C$ that intersect $H_0'(I^n)$.
    The set of indices $J$ is finite, because $H_0'(I^n)$ is a compact set and $\partial C$ forms a locally-finite collection of leaves of $\F$ (see \cite[Lemma 2.4]{schuback1}).
    For each $i\in J$, let $\phi(B_i) \in \partial C$ be the leaf satisfying
    \mycomment{-0.1cm}
    $$ B_i = A \cap \overline{\rule{0cm}{0.32cm} L(\phi(B_i))}\quad \text{ and } \quad C \subset R(\phi(B_i)).$$
    
    \mycomment{-0.25cm}
    \noindent
    Note that, for each $i \in J$, the set $B_i \cap H_0'(I^n)$ is a compact. This implies that each set
    \mycomment{-0.1cm}$$ (H_0')^{-1}(B_i) = \{x\in I^n \mid H_0'(x) \in B_i\}$$

    \mycomment{-0.25cm}
    \noindent
    is a compact subset of $I^n$ and, consequently, that $ H_0(H_0')^{-1}(B_i))$ is a compact subset of $V$.

    \begin{claim}\label{claim:timeframe}
        For each $i\in J$, there exists a leaf $\phi_i \in \F$ that intersects $V$ and satisfies
        \mycomment{-0.1cm}$$ H_0((H_0')^{-1}(B_i)) \subset R(\phi_i) \quad \text{ and } \quad B_i \subset L(\phi_i).$$
    \end{claim}

    \vspace*{-0.1cm}
    \begin{figure}[h!]
    \center\begin{overpic}[width=3.6cm, height=5cm, tics=10]{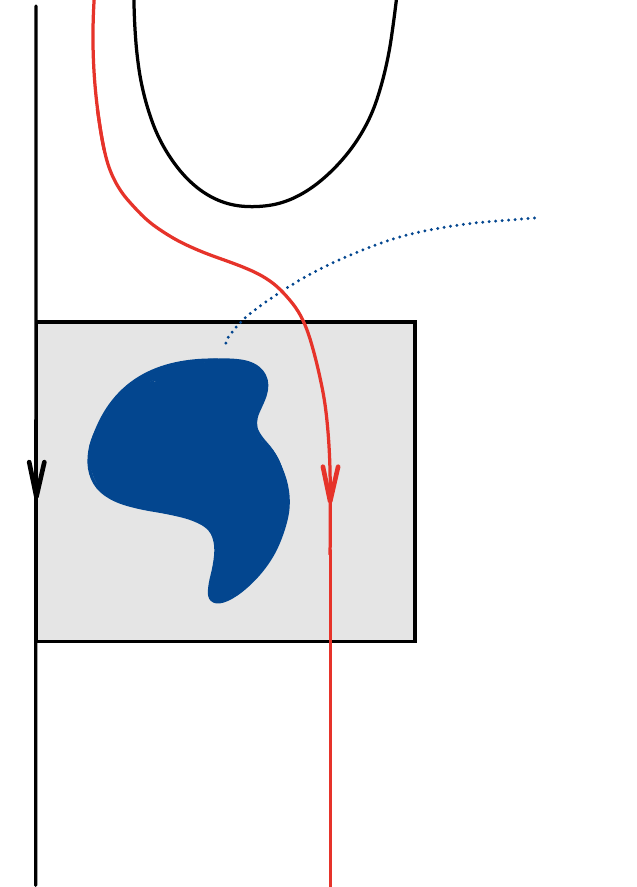}
         \put (7,30) {{\color{black}\large$\displaystyle V$}}
         \put (39,4) {{\color{myRED}\large$\displaystyle \phi_i$}}
         \put (24,88) {{\color{black}\large$\displaystyle B_i$}}
         \put (-8,5) {{\color{black}\large$\displaystyle \phi_R$}}
         \put (62,73) {{\color{myBLUE}\large$\displaystyle H_0((H_0')^{-1}(B_i))$}}
\end{overpic}
\end{figure}

\mycomment{-0.3cm}
    \begin{proof}[Proof of the Claim \ref{claim:timeframe}]
        Since $H_0((H_0')^{-1}(B_i))$ is compact subset of $V$, we can define the leaf $\phi_i^* \in \F$ as the left-most leaf intersecting $H_0((H_0')^{-1}(B_i))$. Let $q \in I^n$ be such that
        \mycomment{-0.1cm}$$ H_0(q) \in \phi_i^* \quad \text{ and } \quad H'_0(q) \in B_i.$$

    \mycomment{-0.25cm}
    \noindent
        Recall that $H'_0(q)$ is accessible from $H_0(q)$, however, they cannot be on the same leaf of $\F$, because $\phi_i^*$ is disjoint from $B_i$. Thus, there exists a path $\gamma:[0,1]\longrightarrow \R^2$ positively transverse to $\F$ such that $\gamma(0) = H_0(q)$ and $\gamma(1) = H'_0(q)$. Note that there exists $s_i\in [0,1]$ such that
        \mycomment{-0.1cm}$$ \gamma(s_i) \in \phi(B_i).$$

    \mycomment{-0.25cm}
    \noindent
        This means that, for any $s\in (\sspc 0,\sspc s_i\sspc )$, the leaf $\phi_{\gamma(s)}$ satisfies
        \mycomment{-0.1cm}$$ R(\phi_i^*) \subset R(\phi_{\gamma(s)}) \quad \text{ and } \quad B_i \subset L(\phi_{\gamma(s)}).$$

    \mycomment{-0.25cm}
    \noindent
    Moreover, there exists a sufficiently small $\varepsilon\in (\sspc 0,\sspc s_i\sspc )$ such that the leaf $\phi_{\gamma(\varepsilon)}$ intersects $V$. Finally, we can simply define the leaf $\phi_i$ to be the leaf $\phi_{\gamma(\varepsilon)}$. Since $\phi_i^*$ is the left-most leaf that intersects $H_0((H_0')^{-1}(B_i))$, we conclude that $\phi_i$ also satisfies
        $ H_0((H_0')^{-1}(B_i)) \subset R(\phi_i)$. This concludes the proof of the claim.
    \end{proof}

    \mycomment{-1cm}
    For each $i\in J$, the set $H_0((H_0')^{-1}(B_i))$ is a compact set contained on $R(\phi_i)$, and therefore, there exists some $T_i\in (0,1)$ such that $H_t((H_0')^{-1}(B_i))$ remains in $R(\phi_i)$ for all $t\in [0,T_i]$. We can finally determine our timeframe by defining
    $$ T: = \min_{i\in J} T_i.$$
    \noindent In the case where $J = \varnothing$, we can simply take $T = 1$. 

    \vspace{0.3cm}
    \textbf{Step 2 - Ensuring accessibility before the retraction.}

    Let $D\subset H_0'(I^n)$ be the subset defined as $D:= \bigcup_{t \in [0,T]} D_t$, where $D_t$ is defined as
    \mycomment{-0.1cm}$$ D_t := \{H_0'(x) \mid H_0'(x) \text{ is not accessible from } H_t(x)\sspc, \sspc x \in I^n\}.$$

    \mycomment{-0.25cm}
    \noindent
    Roughly speaking, the set $D$ contains all points in $H_0'(I^n)$ that lose accessibility from $H_t(x)$ for some $t\in [0,T]$. This is the set of points that we need to take care of in order to ensure the accessibility condition before we perform the retraction of $H_0'(I^n)$ into $C$.

    For $i \in J$, let $N_i\subset \R^2$ be a neighborhood of $B_i$ such that $N_i \subset L(\phi_i)$ and $N_i \cap \overline{\rule{0cm}{0.3cm} V} = \varnothing$. Observe that, from the definition of $T$, we have that $H_t((H_0')^{-1}(B_i)) \subset R(\phi_i)$ for all $t\in [0,T]$. Thus, by considering each neighborhood $N_i$ to be sufficiently small, we can also ensure that
    \mycomment{-0.1cm}$$ H_t((H_0')^{-1}(N_i)) \subset R(\phi_i), \quad \forall t\in [0,T].$$

    \mycomment{-0.2cm}
    \begin{claim}
    \label{claim:problem_set}
        There exists a compact set $K \subset C \setminus \bigcup_{i\in J} N_i$ that satisfies $D \subset K$.
    \end{claim}

    \begin{proof}[Proof of the Claim \ref{claim:problem_set}]
        Let $x \in (H'_0)^{-1}(N_i)$. Since the point $H_0'(x)$ is initially accessible from $H_0(x)$, and since $H_t(x)$ remains inside $V \cap R(\phi_i)$ for all $t\in [0,T]$, we can use the local trivial structure of the foliation $\F$ within $V\cap R(\phi_i)$ to show that the point $H_0'(x)$ remains accessible from $H_t(x)$ for all $t\in [0,T]$. In particular, $D$ is disjoint from each set $N_i$, implying
        $$ D \spc\subset\spc H_0'(I^n) \setminus \bigcup_{i\in J} N_i  \spc\subset\spc A \setminus \bigcup_{i\in J} N_i  \spc\subset\spc C \setminus \bigcup_{i\in J} N_i.$$
        We conclude the proof of the claim by defining the compact set $K := H_0'(I^n) \setminus \bigcup_{i\in J} N_i$.
    \end{proof}

    Using the fact that $C$ is trivially foliated by $\F$, we can consider an open set $U\subset \R^2$, contained in $C$, that locally trivializes the foliation $\F$ and satisfies $K\subset U$ and $\phi_L \cap U \neq \varnothing$.

    \mycomment{0.2cm}
    \begin{figure}[h!]
    \center\begin{overpic}[width=11cm, height=3.7cm, tics=10]{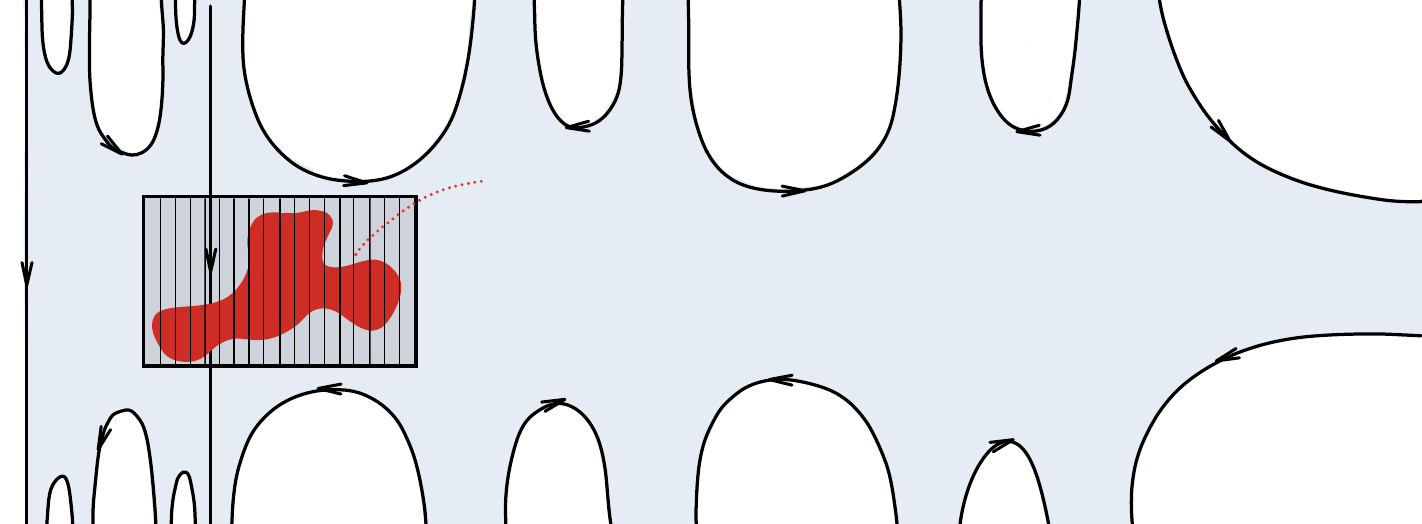}
         \put (0,-4) {{\color{black}\large$\displaystyle \phi_R$}}
        \put (14,-4) {{\color{black}\large$\displaystyle \phi_R$}}
        \put (6,14.5) {{\color{black}\large$\displaystyle U$}}
        \put (50,13.5) {{\color{black}\large$\displaystyle C$}}
         \put (35,21) {{\color{myRED}\large$\displaystyle K$}}
\end{overpic}
\end{figure}

    Similarly to the simplified case, consider an isotopy $(g_t)_{t\in [0,T/2]}$ on $\R^2$, supported on $\overline{\rule{0cm}{0.3cm} U}$, starting at the identity and moving  points positively transverse to $\F$, until the set $g_{T/2}(K)$ is contained in $L(\phi_L) \cap U$. As mentioned before, the speed at which $(g_t)_{t\in [0,T/2]}$ moves points depends on the given isotopy $(H_t)_{t\in [0,T/2]}$, but by the compactness of $K$, we can ensure that the isotopy $(g_t)_{t\in [0,T/2]}$ can be taken to move points sufficiently fast so that the accessibility condition
    $ g_t(H_0'(x)) \in A_{H_t(x)}\setminus \{H_t(x)\}$
    remains satisfied for all $x\in I^n$ and $t\in [0,T/2]$. 

    We can finally define the first fragment of the continuous family of maps $\{H_t'\}_{t\in [0,T/2]}$ by
    $$ H_t' := g_t\circ H_0', \quad \forall t\in [0,T/2].$$
    Observe that, points in $L(\phi_L) \cap U$ are automatically accessible from $V$. Since $K$ contains all points that may break the accessibility condition for times $t\in [0,T]$, we have that 
    $$ H'_{T/2}(x) \text{ is accessible from } H_t(x)\sspc, \quad \forall x\in I^n\sspc,\ t\in [T/2,T].$$

    With this property, we can finally perform the necessary retraction of $H'_{T/2}(I^n)$ into $C$.

    \vspace*{0.3cm}

    \textbf{Step 3 - Retraction isotopy.}

    Finally, for each $i \in J$, we can consider the isotopy $(h^{(i)}_t)_{t\in [T/2,T]}$ on $\R^2$ starting at the identity map, supported on the neighborhood $N_i$ that moves the set $H'_{T/2}(I^n)$ outside $B_i$, until $h^{(i)}_T(H'_{T/2}(I^n))$ lies entirely on $R(\phi(B_i))$. This isotopy is illustrated in the figure below.

    \mycomment{0.3cm}
    \begin{figure}[h!]
    \center\begin{overpic}[width=11cm, height=3.5cm, tics=10]{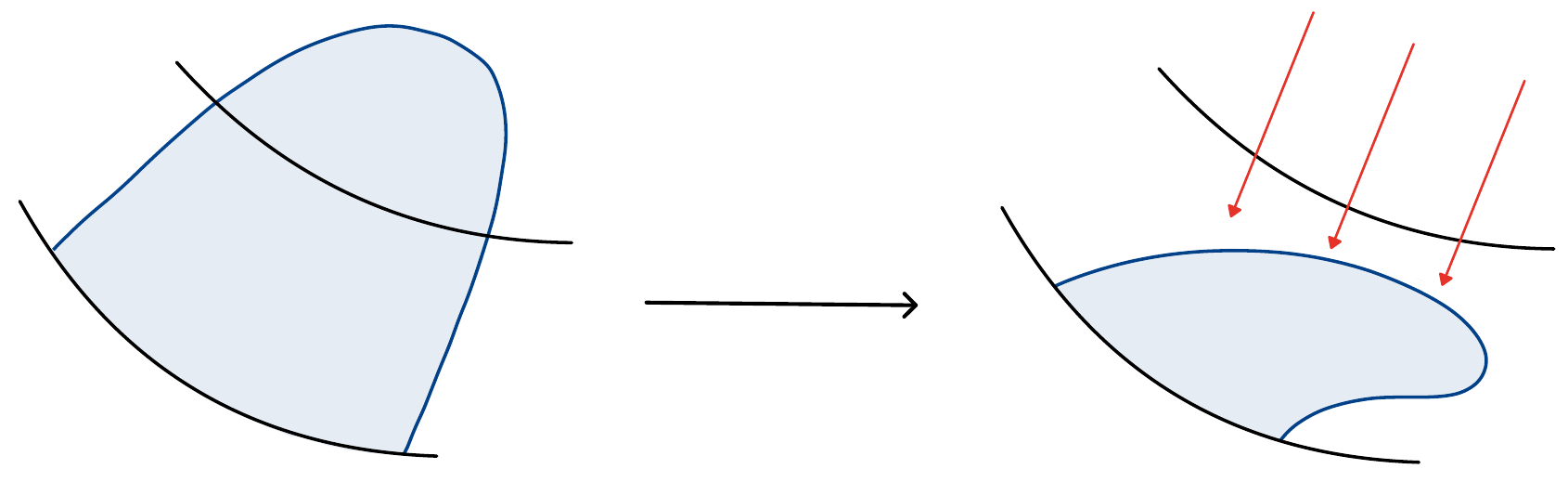}
        \put (-2,21) {{\color{black}\large$\displaystyle N_i$}}
        \put (4.5,30) {{\color{black}\large$\displaystyle \phi(B_i)$}}
        \put (60,21) {{\color{black}\large$\displaystyle N_i$}}
        \put (68,30) {{\color{black}\large$\displaystyle \phi(B_i)$}}
        \put (89.5,31.5) {{\color{myRED}\large$\displaystyle h_t^{(i)}$}}
        \put (41,13.5) {{\color{black} Retraction}}
        \put (44,8.3) {{\color{black} isotopy}}
         \put (16.99,23.5) {{\color{myBLUE}\large$\displaystyle H'_{T/2}(I^n)$}}
          \put (72.5,7.8) {{\color{myBLUE}\large$\displaystyle h_T(H'_{T/2}(I^n))$}}
\end{overpic}
\end{figure}

    \mycomment{-0.3cm}
    Since the neighborhoods in the finite family $\{N_i\}_{i \in J}$ are pairwise disjoint, we can define for each $t \in [T/2,T]$ the homeomorphism $h_t$ given by the composition of all maps $h^{(i)}_t$, $i \in J$. This composition yields an isotopy $(h_t)_{t\in [T/2,T]}$ on $\R^2$, that allows us to construct the second fragment of the continuous family of maps $\{H_t'\}_{t\in [T/2,T]}$, given by the expression
    $$ H_t' := h_t\circ H'_{T/2}, \quad \forall t\in [T/2,T].$$
    Observe that the family $\{H_t'\}_{t\in [T/2,T]}$ does not risk breaking the accessibility condition, since the set $H_t((H'_{T/2})^{-1}(N_i))$ remains in $R(\phi_i)$ for all $t\in [T/2,T]$. Now that the set $H_T'(I^n)$ is contained in $C$, we can finally construct the last fragment of the continuous family $\{H_t'\}_{t\in [T,1]}$ as in the simplified case. This concludes the proof of the lemma.
\end{proof}

\mycomment{-1.1cm}

\section{Proof of Theorem \ref{thmx:III-A}}

\mycomment{-0.4cm}
\begin{proof}[Proof of Theorem \ref{thmx:III-A}]
    Recall that two orbits of a Brouwer homeomorphism either cross a common leaf of the foliation $\F$, or are separated by a leaf of the foliation $\F$. 
    
    First, assume that $\O_1$ and $\O_2$ cross a common leaf of the foliation $\F$. As a consequence of Theorem \ref{thm:free}, we know there exists an isotopy $(f_t)_{t\in [0,1]}$ relative to $\O_1\cup \O_2$ such that each $f_t$ is a Brouwer homeomorphism, $f_0 = f$ and $f_1$ is conjugate to a translation. In particular, we have that $\textup{Ind}(f_1,\O_1,\O_2) = 0$. Recall that $\textup{Ind}(f,\O_1,\O_2)$ is invariant under isotopies of Brouwer homeomorphisms relative to the orbits $\O_1$ and $\O_2$. This implies that $\textup{Ind}(f_t,\O_1,\O_2)$ is constant along $t\in [0,1]$, and therefore, $\textup{Ind}(f,\O_1,\O_2) = 0$.

    Now, we show that in this case, the index $\textup{Ind}(\F,\Gamma_1,\Gamma_2)$ is also equal to $0$. Let $\phi\in \F$ be a leaf that is crossed by both $\O_1$ and $\O_2$. Note that, if $\Gamma_1$ and $\Gamma_2$ intersect each other, then we automatically have that $\textup{Ind}(\F,\Gamma_1,\Gamma_2) = 0$. Otherwise, we can apply the Homma-Schoenflies theorem to obtain an orientation-preserving homeomorphism $h:\R^2\longrightarrow \R^2$ satisfying
    \mycomment{-0.1cm}
$$h(\Gamma_1) = \R_1\sspc, \quad h(\Gamma_2) = \R_2\sspc, \quad \text{ and } \quad h(\phi) = \{0\} \times \R.$$

\mycomment{-0.2cm}
\noindent In particular, $h \in \str(\Gamma_1,\Gamma_2)$. Assume that $h$ maps the orientation of $\phi$ to the downward orientation of the vertical line $ h(\phi) = \{0\} \times \R$. It is important to note that this cannot be guaranteed in general, but the proof is completely analogous in the other case.

    Consider the paths $\alpha,\alpha':[0,1]\longrightarrow \R^2$ defined by $\alpha(t) = (0,t+1)$ and $\alpha'(t) = (0,t+1 - \varepsilon)$, for some $\varepsilon > 0$. Note that the path $t\longmapsto (\alpha(t), \alpha'(t))$ is contained in $W_{h\F}$, and satisfies
    $$ \Tp(\alpha(t), \alpha'(t), \H) = -\dot{1}\sspc, \quad \forall t\in [0,1].$$
 This means that $\Theta_{h\F}(\alpha(t), \alpha'(t))$ is constant along $t\in [0,1]$. In particular, by considering the two points $x_1 \in \Gamma_1$ and $x_2 \in \Gamma_2$ which satisfy $h(x_1) = \alpha(0)$ and $h(x_2) = \alpha(1)$, and the two points $x_1' \in \phi^+_{x_1}$ and $x_2' \in \phi^+_{x_2}$ that satisfy $h(x_1') = \alpha'(0)$ and $h(x_2') = \alpha'(1)$, we get that
    $$ \Theta_{h\F}(h(x_2), h(x_2')) = \Theta_{h\F}(h(x_1), h(x_1')).$$
    Therefore, we conclude that $\textup{Ind}(\F,\Gamma_1,\Gamma_2) = 0 = \textup{Ind}(f,\O_1,\O_2)$ in this first case.

    \mycomment{-0.65cm}
    Now, we consider the case where $\O_1$ and $\O_2$ are separated by a leaf of the foliation $\F$. In this case, $\Gamma_1$ and $\Gamma_2$ are automatically disjoint, and we can consider the set $\str^\dagger(\Gamma_1,\Gamma_2)$, formed by all $h \in \str(\Gamma_1,\Gamma_2)$ satisfying the additional condition $h(\O_1) = \Z_1$ and $h(\O_2) = \Z_2$.

    \begin{claim}\label{claim:str_dagger}
        $\str^\dagger(\Gamma_1,\Gamma_2)\subset \textup{Handel}(\Gamma_1,\Gamma_2)$.
    \end{claim}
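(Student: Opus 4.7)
The plan is to find a reference element $h^\ast \in \textup{Handel}(f,\O_1,\O_2) \cap \str^\dagger(\Gamma_1,\Gamma_2)$, and then show that any $h \in \str^\dagger(\Gamma_1,\Gamma_2)$ differs from $h^\ast$ by a homeomorphism whose conjugation action preserves the canonical Handel classes.

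To build the reference, I would invoke Handel's classification theorem and pick some $h_0 \in \textup{Handel}(f,\O_1,\O_2)$, setting $M := h_0 \circ f \circ h_0^{-1}$, so that $[M,\Z_{1,2}]$ is one of the four canonical mapping classes. Because $\O_1$ and $\O_2$ are separated by a leaf of $\F$, the images $h_0(\Gamma_1)$ and $h_0(\Gamma_2)$ are disjoint proper topological lines, each containing $\Z_i$ in the natural $\Z$-ordering inherited from the transverse-trajectory structure. The Homma-Schoenflies theorem then produces an orientation-preserving homeomorphism $\psi\colon \R^2 \to \R^2$ with $\psi(h_0(\Gamma_i)) = \R_i$; applying the Alexander trick to each of the three components of $\R^2\setminus(h_0(\Gamma_1)\cup h_0(\Gamma_2))$ and of $\R^2\setminus(\R_1\cup\R_2)$, I can arrange furthermore that $\psi$ fixes $\Z_{1,2}$ pointwise and is isotopic to the identity through homeomorphisms preserving $\Z_{1,2}$ setwise. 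Hence $h^\ast := \psi \circ h_0 \in \str^\dagger$, and since conjugation by such a $\psi$ preserves the mapping class of $M$, also $h^\ast \in \textup{Handel}$.

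Next, for an arbitrary $h \in \str^\dagger$, I would set $\phi := h \circ (h^\ast)^{-1}$, an orientation-preserving self-homeomorphism of $\R^2$ setwise preserving each $\R_i$ and each $\Z_i$. Since $\phi$ preserves the global orientation and the horizontal lines $\R_i$, the restriction $\phi|_{\R_i}$ is forced to be orientation-preserving on $\R_i$ and hence acts on $\Z_i$ by a translation $n \mapsto n+k_i$ for some $k_i \in \Z$. Applying the Alexander trick to the three components of $\R^2\setminus(\R_1\cup\R_2)$, $\phi$ is isotopic rel $\Z_{1,2}$ to a standard homeomorphism $\phi_0$ acting as $T^{k_i}$ along each $\R_i$. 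A direct computation then shows that $[\phi_0 \circ M \circ \phi_0^{-1},\Z_{1,2}] = [M,\Z_{1,2}]$ for each $M \in \{T^{\pm 1},R^{\pm 1}\}$, and therefore
$$[h \circ f \circ h^{-1},\Z_{1,2}] \;=\; [\phi \circ M \circ \phi^{-1},\Z_{1,2}] \;=\; [M,\Z_{1,2}],$$
which proves $h \in \textup{Handel}(f,\O_1,\O_2)$.

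The main obstacle will be the careful execution of these Alexander-trick isotopies while respecting $\Z_{1,2}$ setwise with a time-independent induced permutation: since $\phi_0$ acts nontrivially on the discrete set $\Z_{1,2}$, the isotopy from $\phi$ to $\phi_0$ cannot fix $\Z_{1,2}$ pointwise and must be arranged so that the induced permutation of $\Z_{1,2}$ is constant in time. Verifying the canonicality equation $[\phi_0 \circ M \circ \phi_0^{-1},\Z_{1,2}] = [M,\Z_{1,2}]$ for $M = R^{\pm 1}$, where $\phi_0$ has different translation amounts on $\R_1$ and $\R_2$, will be the key computational check.
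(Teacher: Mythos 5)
Your approach is genuinely different from the paper's, and it contains a gap at its foundational step. The paper's proof invokes a structural result (Corollary~6.2 of \cite{schuback2}) asserting that $f$ is isotopic relative to $\O_1\cup\O_2$ to the time-one map $f_1$ of a flow having $\Gamma_1$ and $\Gamma_2$ as trajectories. For $h\in\str^\dagger(\Gamma_1,\Gamma_2)$ the conjugate $h\circ f_1\circ h^{-1}$ then preserves $\R_1\cup\R_2$ setwise, so the Alexander trick on the three complementary components straightens it \emph{relative to $\R_1\cup\R_2$} to one of $T^{\pm1},R^{\pm1}$; no reference element in $\textup{Handel}\cap\str^\dagger$ is ever constructed. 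Your steps (b) and (c)---reducing $h\circ(h^*)^{-1}$ to a shift-type model $\phi_0$ and checking that conjugation by $\phi_0$ preserves each canonical class---are essentially sound (they are a variant of the same Alexander-trick reasoning).

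The gap is in step (a), the construction of $h^*=\psi\circ h_0$. You assert that, after Homma--Schoenflies plus the Alexander trick, the straightening $\psi$ carrying $h_0(\Gamma_1)\cup h_0(\Gamma_2)$ to $\R_1\cup\R_2$ can be taken isotopic to the identity relative to $\Z_{1,2}$. This does not follow: the Alexander trick applies to homeomorphisms of a component fixing its boundary pointwise, but $\psi$ does not fix $h_0(\Gamma_1)\cup h_0(\Gamma_2)$, it moves it to $\R_1\cup\R_2$. In general, a pair of disjoint proper topological lines passing through $\Z_1$ and $\Z_2$ in order need \emph{not} be straightenable by a map trivial in $\mcg(\R^2\setminus\Z_{1,2})$: apply a Dehn twist $\tau_c$ to $(\R_1,\R_2)$ about a suitable simple closed curve $c$ in $\R^2\setminus\Z_{1,2}$ to obtain a configuration whose straightening is $\tau_c^{-1}$, not isotopic to the identity. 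Ruling this out requires exploiting that $\Gamma_1,\Gamma_2$ are transverse trajectories of a transverse foliation of $f$ and that $h_0\in\textup{Handel}(f,\O_1,\O_2)$; this is exactly the content that the paper supplies via the flow-time-one-map isotopy, and your proof omits it. Without such input, the claim $h^*\in\textup{Handel}(f,\O_1,\O_2)$ is circular: establishing $\textup{Handel}\cap\str^\dagger\neq\varnothing$ is essentially the theorem, and the remaining steps (b)--(c) only propagate that single membership to all of $\str^\dagger$.
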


    \begin{proof}[Proof of Claim \ref{claim:str_dagger}]
        As a consequence of \cite[Corollary 6.2]{schuback2}, we know that there exists an isotopy $(f_t)_{t\in [0,1]}$ relative to $\O_1\cup \O_2$ such that $f_0 = f$ and $f_1$ is the time-one map of a flow having $\Gamma_1$ and $\Gamma_2$ as trajectories.

        Let $h \in \str^\dagger(\Gamma_1,\Gamma_2)$. Observe that, by performing the Alexander trick on each connected component of the complement $\R^2\setminus (\Gamma_1\cup \Gamma_2)$, we can construct an isotopy $(F_t)_{t\in [0,1]}$ relative to $\Gamma_1\cup \Gamma_2$ such that $F_0 = h \circ f_1 \circ h^{-1}$, and $F_1$ is either $T$, $T^{-1}$, $R$, or $R^{-1}$. This means that the map $h \circ f_1 \circ h^{-1}$, and therefore $h \circ f \circ h^{-1}$ as well, is isotopic relative to $\O_1 \cup \O_2$ to either $T$, $T^{-1}$, $R$, or $R^{-1}$. In particular, this implies that $h \in \textup{Handel}(\Gamma_1,\Gamma_2)$.
    \end{proof}

    Consider the set $W_{A(\F)}$ of accessible points in $ \R^2\times \R^2\setminus \Delta$, as defined in Section \ref{sec:accessible}. As shown by Lemma \ref{lem:fibration}, the projection $p:W_{A(\F)}\longrightarrow \R^2$, $p(z,z') = z$, is a Serre fibration. Since the base space $\R^2$ and the fibers $p^{-1}(z) = A_z\setminus\{z\}$, for $z\in \R^2$, are simply-connected, the Corollary \ref{cor:simpli-connected-fibrations} tells us that $W_{A(\F)}$ is simply-connected. Therefore, the lifting theorem guarantees that the function
    $\spc\dot\theta(\hspace*{0.08cm} \cdot \hspace*{0.08cm}, \hspace*{0.08cm}\cdot \hspace*{0.08cm}, \H )\vert_{W_{A(\F)}}$ admits a lift $\Theta_{A(\F)}:W_{A(\F)}\longrightarrow \Z$, which is unique up to the addition of an integer multiple of $4$.
    
    This is represented in the commutative diagram below.

    \begin{center}
\hspace*{0.6cm}\begin{tikzpicture}[scale=0.85, node distance=2.2cm]
    \node (Wf) at (0,0) {$W_{A(\F)}$};
    \node (Z4) at (4.3,0) {$\ZZ$};
    \node (Z) at (4.3,2.8) {$\mathbb{Z}$};
    
    \draw[->, thick] ([xshift=1pt]Wf.east) -- ([xshift=-1.5pt]Z4.west) node[midway, below=0.1cm] {$\spc\dot\theta(\hspace*{0.08cm} \cdot \hspace*{0.08cm}, \hspace*{0.08cm}\cdot \hspace*{0.08cm}, \H )$};
    \draw[->, thick] ([yshift=1pt]Wf.north east) -- ([yshift=-1pt]Z.south west) node[midway, above left=0.05cm] {$\Theta_{A(\F)}$};
    \draw[->, thick] ([yshift=-2.5pt]Z.south) -- ([yshift=2.5pt]Z4.north) node[midway, right=0.1cm] {$\text{mod}_4$};
    
    \node at (2.85,0.9) {\LARGE$\circlearrowright$};
\end{tikzpicture}
\end{center}

\mycomment{-0.3cm}
Next, observe that both $W_\F$ and $W_f$ are subsets of $W_{A(\F)}$. The inclusion $W_\F\subset W_{A(\F)}$ is trivial from their definitions. On the other hand, the inclusion $W_f\subset W_{A(\F)}$ follows from the fact that $\F$ is a transverse foliation of $f$, as it means that $f(z) \in L(\phi_z)$ for every $z\in \R^2$. As a consequence of these inclusions, we can restrict the lift $\Theta_{A(\F)}$ to the sets $W_\F$ and $W_f$, obtaining lifts of the maps $\Tp(\hspace*{0.08cm} \cdot \hspace*{0.08cm}, \hspace*{0.08cm}\cdot \hspace*{0.08cm}, \F )\vert_{W_\F}$ and $\Tp(\hspace*{0.08cm} \cdot \hspace*{0.08cm}, \hspace*{0.08cm}\cdot \hspace*{0.08cm}, f )\vert_{W_f}$, respectively. In other words,
$$ \Theta_{A(\F)}\vert_{W_\F} = \Theta_\F \quad \text{ and } \quad \Theta_{A(\F)}\vert_{W_f} = \Theta_f.$$

\mycomment{-1cm}
   \noindent This means that, using Claim \ref{claim:str_dagger}, we can compute both indices as
   \begin{align*}
    \textup{Ind}(f,\O_1,\O_2) &= \frac{1}{4}\Bigl(\sspc\Theta_{A(h\F)}(h(x_2), h\circ f(x_2)) - \Theta_{A(h\F)}(h(x_1), h\circ f(x_1))\sspc \Bigr),\\[0.4ex]
    \textup{Ind}(\F,\Gamma_1,\Gamma_2) &= \frac{1}{4}\Bigl(\sspc\Theta_{A(h\F)}(h(x_2), h(x_2')) - \Theta_{A(h\F)}(h(x_1), h(x_1'))\sspc \Bigr),
   \end{align*}
   for any map $h \in \str^\dagger(\Gamma_1,\Gamma_2)$, and any two points $x_i \in \O_i$ and $x_i' \in \phi^+_{x_i}\setminus\{x_i\}$, for $i=\{1,2\}$.

   Therefore, for each $i \in \{1,2\}$, fix a point $x_i \in \O_i$, and let $\gamma_i:[0,1]\longrightarrow \R^2$ be a sub-path of the transverse trajectory $\Gamma_i$ that connects $x_i$ to $f(x_i)$. Let $V_1$ and $V_2$ be disjoint neighborhoods of $\gamma_1$ and $\gamma_2$, respectively, that locally trivialize the foliation $\F$. For each $i\in \{1,2\}$, we fix a point $x_i' \in \phi^+_{x_i}\setminus\{x_i\}$ that is contained in the neighborhood $V_i$.

   By the Homma-Schoenflies theorem, there exists a $h \in \str^\dagger(\Gamma_1,\Gamma_2)$ that satisfies
   \mycomment{0.1cm}
   $$ h(\sspc \overline{\rule{0cm}{0.3cm} V_i}\sspc ) = \left[-\frac{1}{2},\frac{3}{2}\right]\times \left[i-\frac{1}{4}, i +\frac{1}{4}\right], \quad \forall i\in \{1,2\}.$$
   This allows us to consider, for each $i\in \{1,2\}$, a line segment $\alpha'_i:[0,1]\longrightarrow \R^2$ that connects $h(x_i')$ to $h\circ f(x_i')$, which is automatically positively transverse to $h\F$ and disjoint from $x_i$.
   This means that, if we define $\alpha_i:[0,1]\longrightarrow \R^2$ as the constant path at $h(x_i)$, then the path 
   \mycomment{-0.1cm}
   $$ A_i: t \longmapsto (\alpha_i(t), \alpha'_i(t))$$

    \mycomment{-0.2cm}
    \noindent is contained in $W_{A(h\F)}$ and connects $(h(x_i), h(x_i'))$ to $(h(x_i), h\circ f(x_i))$, for each $i\in \{1,2\}$.

   Observe that, the construction of the paths $A_i$ guarantees that
   \mycomment{-0.1cm}
   $$\Theta_{A(h\F)} \circ A_i (t) = \Theta_{A(h\F)}(h(x_i), h(x_i')), \quad \forall t\in [0,1),$$

    \mycomment{-0.2cm}
    \noindent As the set $A_i([0,1))$ is entirely contained below, or above, the horizontal $h(\Gamma_i) =\R_i$, depending on the initial position of the point $h(x_i')$ relative to $\R_i$. Now, observe that, when the point $h(x_i')$ is located below the horizontal line $\R_i$, the point $h\circ f(x_i')$ must lie on the horizontal $\R_i$ with a first coordinate greater than the first coordinate of $h(x_i')$. Meanwhile, when the point $h(x_i')$ is located above $\R_i$, the point $h\circ f(x_i')$ must lie on the horizontal $\R_i$ with a first coordinate less than the first coordinate of $h(x_i')$. In both cases, we have that the value of $\Theta_{A(h\F)} \circ A_i (1)$ increases by $1$ with respect to the value of $\Theta_{A(h\F)} \circ A_i (t)$ for $t \in [0,1)$. 
   This allows us to conclude that, for each $i\in \{1,2\}$, we have the following equality
   $$ \Theta_{A(h\F)}(h(x_i), h\circ f(x_i)) = \Theta_{A(h\F)}(h(x_i), h(x_i')) + 1.$$
   By substituting this equality into the expression for the index $\textup{Ind}(f,\O_1,\O_2)$, as given above, this difference of $1$ cancels out, and we finally obtain the equality
   \mycomment{-0.1cm}
   $$ \textup{Ind}(f,\O_1,\O_2) = \textup{Ind}(\F,\Gamma_1,\Gamma_2).$$
   This concludes the proof of Theorem \ref{thmx:III-A}.
\end{proof}

\setstretch{1.6}	

    \bibliography{biblio1.bib}

@article{handel99,
title = {A fixed-point theorem for planar homeomorphisms},
journal = {Topology},
volume = {38},
number = {2},
pages = {235-264},
year = {1999},
issn = {0040-9383},
doi = {https://doi.org/10.1016/S0040-9383(98)00001-9},
url = {https://www.sciencedirect.com/science/article/pii/S0040938398000019},
author = {Handel, Michael}
}

@article{CalvezTwist,
  author = {Le Calvez, Patrice},
  title = {Twist Maps of the Annulus: An Abstract Point of View},
  journal = {Regular and Chaotic Dynamics},
  volume = {28},
  number = {4},
  pages = {343--363},
  year = {2023},
  doi = {10.1134/S1560354723510019},
  url = {https://link.springer.com/article/10.1134/S1560354723510019}
}

@book{tomDieck2008algebraicTopology,
  title     = {Algebraic Topology},
  author    = {Tom Dieck, Tammo},
  isbn      = {9783037190487},
  url       = {https://books.google.de/books?id=ruSqmB7LWOcC},
  year      = {2008},
  publisher = {European Mathematical Society}
}

@article{Whitney,
 ISSN = {0003486X, 19398980},
 URL = {http://www.jstor.org/stable/1968202},
 author = {Hassler Whitney},
 journal = {Annals of Mathematics},
 number = {2},
 pages = {244--270},
 publisher = {[Annals of Mathematics, Trustees of Princeton University on Behalf of the Annals of Mathematics, Mathematics Department, Princeton University]},
 title = {Regular Families of Curves},
 urldate = {2025-08-15},
 volume = {34},
 year = {1933}
}

@article{schuback1,
      title={Refined methods in foliated {B}rouwer theory}, 
      author={Schuback, Nelson},
      year={2025},
      eprint={https://arxiv.org/abs/2510.17616},
      archivePrefix={arXiv},
      journal={arXiv preprint arXiv:2510.17616},
      primaryClass={math.DS},
      comment={https://arxiv.org/abs/2510.17616}, 
}

@article{schuback2,
      title={A foliated viewpoint on homotopy {B}rouwer theory}, 
      author={Schuback, Nelson},
      year={2025},
      eprint={https://arxiv.org/abs/2512.07510},
      archivePrefix={arXiv},
      journal={arXiv preprint arXiv:2512.07510},
      primaryClass={math.DS},
      comment={https://arxiv.org/abs/2512.07510}, 
}

@article{BeguinCrovisierLeRoux2020,
  author  = {François Béguin and Sylvain Crovisier and Frédéric Le Roux},
  title   = {Fixed point sets of isotopies on surfaces},
  fjournal = {Journal of the European Mathematical Society (JEMS)},
  journal = {J. Eur. Math. Soc. (JEMS)},
  volume  = {22},
  number  = {6},
  pages   = {1971--2046},
  year    = {2020}
}

@article{LEROUX_2017, 
title={An index for {Brouwer} homeomorphisms and homotopy Brouwer theory}, volume={37}, 
DOI={10.1017/etds.2015.56}, 
number={2},
 journal={Ergodic Theory and Dynamical Systems},
  author={Le Roux, Frédéric}, year={2017}, pages={572–605}}

@article{BAVARD_2017, 
title={Conjugacy invariants for Brouwer mapping classes},
 volume={37}, DOI={10.1017/etds.2015.122}, 
 number={6}, journal={Ergodic Theory and Dynamical Systems},
  author={Bavard, Juliette}, year={2017}, pages={1765–1814}}

@article{lec1,
 author = {Le Calvez, Patrice},
 title = {A foliated version of the {Brouwer} translation theorem},
 fjournal = {Commentarii Mathematici Helvetici},
 journal = {Comment. Math. Helv.},
 issn = {0010-2571},
 volume = {79},
 number = {2},
 pages = {229--259},
 year = {2004},
 language = {French},
 doi = {10.1007/s00014-003-0745-9},
 keywords = {37E30,37E35},
 zbMATH = {2106469},
 Zbl = {1073.37045}
}

@article{Andrea65,
 author = {Andrea, S. A.},
 title = {On homeomorphisms of the plane, and their embedding in flows},
 fjournal = {Bulletin of the American Mathematical Society},
 journal = {Bull. Am. Math. Soc.},
 issn = {0002-9904},
 volume = {71},
 pages = {381--383},
 year = {1965},
 language = {English},
 doi = {10.1090/S0002-9904-1965-11304-0},
 zbMATH = {3204289},
 Zbl = {0125.40001}
}

@article{Brouwer,
	author = {Brouwer, L.  E.  J. },
	date = {1912/03/01},
	date-added = {2024-05-07 00:47:14 -0300},
	date-modified = {2024-05-07 00:47:14 -0300},
	doi = {10.1007/BF01456888},
	id = {Brouwer1912},
	isbn = {1432-1807},
	journal = {Mathematische Annalen},
	number = {1},
	pages = {37--54},
	title = {Beweis des ebenen Translationssatzes},
	url = {https://doi.org/10.1007/BF01456888},
	volume = {72},
	year = {1912},
	bdsk-url-1 = {https://doi.org/10.1007/BF01456888}}

\end{document}